\def\bint{{\ifinner\rlap{\bf\kern.30em--}
\int\else\rlap{\bf\kern.35em--}\int\fi}\ignorespaces}
\def\sbint{{\ifinner\rlap{\bf\kern.32em--}
\hspace{0.078cm}\int\else\rlap{\bf\kern.45em--}\int\fi}\ignorespaces}
\def\red{\color{red}}
\def\rr{{\mathbb R}}
\def\rn{{\mathbb{R}^n}}
\def\cc{{\mathbb C}}
\def\nn{{\mathbb N}}
\def\zz{{\mathbb Z}}
\def\fz{\infty }
\def\lz{\lambda}
\def\lf{\left}
\def\r{\right}
\def\ls{\lesssim}
\def\noz{\nonumber}
\def\wz{\widetilde}
\def\loc{{\mathrm{loc}}}
\DeclareMathOperator{\supp}{supp}
\def\XXint#1#2#3{{\setbox0=\hbox{$#1{#2#3}{\int}$ }
\vcenter{\hbox{$#2#3$ }}\kern-.6\wd0}}
\def\f{\frac}
\def\lz{{\lambda}}
\newtheorem{theorem}{Theorem}[section]
\newtheorem{lemma}[theorem]{Lemma}
\newtheorem{proposition}[theorem]{Proposition}
\newtheorem{assumption}[theorem]{Assumption}
\theoremstyle{definition}
\newtheorem{remark}[theorem]{Remark}
\newtheorem{definition}[theorem]{Definition}
\renewcommand{\appendix}{\par
\setcounter{section}{0}%
\setcounter{subsection}{0}%
\setcounter{subsubsection}{0}%
\gdef\thesection{\@Alph\c@section}%
\gdef\thesubsection{\@Alph\c@section.\@arabic\c@subsection}%
\gdef\theHsection{\@Alph\c@section.}%
\gdef\theHsubsection{\@Alph\c@section.\@arabic\c@subsection}%
\csname appendixmore\endcsname
}
\numberwithin{equation}{section}
\begin{document}

\arraycolsep=1pt

\title{\bf\Large
Boundedness of Fractional Integrals on
Ball Campanato-Type Function Spaces
\footnotetext{\hspace{-0.35cm} 2020 {\it
Mathematics Subject Classification}. Primary 47G40;
Secondary 42B20, 47A30, 42B30, 46E35, 42B25, 42B35.
\endgraf {\it Key words and phrases.}
fractional integral, ball quasi-Banach function space, Campanato-type space,
Hardy-type space.
\endgraf This project is partially supported by
the National Natural Science Foundation of China (Grant Nos.\
11971058 and 12071197) and the National
Key Research and Development Program of China
(Grant No.\ 2020YFA0712900).}}
\date{}
\author{Yiqun Chen, Hongchao Jia and Dachun Yang\footnote{Corresponding author,
E-mail: \texttt{dcyang@bnu.edu.cn}/{\red June 13, 2022}/Final version.}}
\maketitle

\vspace{-0.8cm}

\begin{center}
\begin{minipage}{13cm}
{\small {\bf Abstract}\quad
Let $X$ be a ball quasi-Banach function space on ${\mathbb R}^n$
satisfying some mild assumptions
and let $\alpha\in(0,n)$ and $\beta\in(1,\infty)$.
In this article, when $\alpha\in(0,1)$, the authors first find a reasonable version
$\widetilde{I}_{\alpha}$ of the fractional integral $I_{\alpha}$
on the ball Campanato-type function space $\mathcal{L}_{X,q,s,d}(\mathbb{R}^n)$
with $q\in[1,\infty)$, $s\in\mathbb{Z}_+^n$, and $d\in(0,\infty)$.
Then the authors prove that
$\widetilde{I}_{\alpha}$ is bounded
from $\mathcal{L}_{X^{\beta},q,s,d}(\mathbb{R}^n)$ to $\mathcal{L}_{X,q,s,d}(\mathbb{R}^n)$
if and only if there exists a positive constant $C$ such that,
for any ball $B\subset \mathbb{R}^n$,
$|B|^{\frac{\alpha}{n}}\leq C
\|\mathbf{1}_B\|_X^{\frac{\beta-1}{\beta}}$,
where $X^{\beta}$ denotes the $\beta$-convexification of $X$.
Furthermore, the authors extend the range $\alpha\in(0,1)$ in $\wz I_\alpha$ to
the range $\alpha\in(0,n)$ and also obtain the
corresponding boundedness in this case.
Moreover, $\wz I_\alpha$ is proved to be the adjoint operator of $I_\alpha$.
All these results have a wide range of applications.
Particularly, even when they are
applied, respectively, to
Morrey spaces, mixed-norm Lebesgue spaces,
local generalized Herz spaces, and mixed-norm Herz spaces, all
the obtained results are new. The proofs of these results strongly
depend on the dual theorem on $\mathcal{L}_{X,q,s,d}(\mathbb{R}^n)$
and also on the special atomic decomposition of molecules of
$H_X(\mathbb{R}^n)$ (the Hardy-type
space associated with $X$) which proves the predual space of $\mathcal{L}_{X,q,s,d}(\mathbb{R}^n)$.
}
\end{minipage}
\end{center}

\vspace{0.2cm}



\section{Introduction\label{s-intro}}

Recall that John and Nirenberg \cite{JN} introduced the well-known
space $\mathop{\mathrm{BMO}\,}(\rn)$ of functions with bounded mean oscillation,
which proves the dual space of the Hardy space $H^1(\rn)$ by Fefferman and Stein in their seminal artical \cite{FS72}.
Later, Taibleson and Weiss \cite{tw80} gave a more general result that, for any
$p\in(0,1]$, $q\in(1,\fz]$, and $s$ being a nonnegative integer not smaller than $n(\frac{1}{p}-1)$,
the dual space of the Hardy space $H^p(\rn)$ is $\mathcal{C}_{\frac{1}{p}-1,q,s}(\rn)$,
the Campanato space introduced in \cite{C}, which coincides with
$\mathop{\mathrm{BMO}\,}(\rn)$ when $p=1$.

It is well known that the real-variable theory,
including the boundedness of fractional integrals
on Hardy-type spaces, plays a fundamental role in harmonic analysis and partial
differential equations (see, for instance,
\cite{ans2021,chy2021,Ho2022,N17,EMS1970,EMS2}).
Moreover, in recent years,
the boundedness of fractional integrals on
Campanato-type spaces has attracted more and more attention.
For instance, Nakai \cite{N10} studied fractional integrals
on Campanato-type spaces with variable growth conditions
over spaces of homogeneous type in the sense of Coifman and Weiss.
In addition,
Arai and Nakai \cite{an2019} studied the
compact commutators of generalized fractional integrals,
with a function in generalized Campanato
spaces, on generalized Morrey spaces.
Recently, Ho \cite{ho2019} studied the mapping properties of several kind of integral operators
on both the space $\mathop{\mathrm{BMO}\,}(\rn)$ and the Campanato space.
We refer the reader also to \cite{HSS2016,ho2019,NS2012,yn2021,yn2022}
for more studies on this.

On the other hand,
Sawano et al. \cite{SHYY}
introduced the ball quasi-Banach function space $X$ and
the related Hardy space $H_X(\rn)$.
Compared with quasi-Banach function spaces,
ball quasi-Banach function spaces contain more function spaces, for
instance, the weighted Lebesgue space, the Morrey space, the mixed-norm Lebesgue space,
the Orlicz--slice space, and the Musielak--Orlicz space are
ball quasi-Banach function space, but they may not be quasi-Banach function spaces (see \cite{SHYY,zyyw,zhyy2022} for the details),
which implies that the results obtained in \cite{SHYY} have more wide applications.
We refer the reader to
\cite{CWYZ2020,dgpyyz,ho2021,is2017,tyyz,wyy,wyyz,zyyw}
for more studies on both $H_X(\rn)$ and $WH_X(\rn)$ (the weak
Hardy spaces associated with $X$)
and to \cite{syy,syy2,yhyy,yhyy1} for
(weak) Hardy spaces associated with $X$ on
spaces of homogeneous type in
the sense of Coifman and Weiss.

Moreover, to obtain a counterpart of the dual theorem
$(H^p(\rn))^*=\mathcal{C}_{\frac{1}{p}-1,q,s}(\rn)$ for the Hardy space $H_X(\rn)$,
Yan et al. \cite{yyy20} introduced the Campanato-type function space
$\mathcal{L}_{X,q,s}(\rn)$ associated with $X$,
which proves the dual space of $H_X(\rn)$ when $X$ is concave.
Very recently, Zhang et al. \cite{zhyy2022} introduced
the ball Campanato-type function space $\mathcal{L}_{X,q,s,d}(\rn)$
associated with $X$ and proved that $(H_X(\rn))^*=\mathcal{L}_{X,q,s,d}(\rn)$.
This result improves the dual theorem in \cite{yyy20}
via removing the assumption that $X$ is concave.
Moreover, the Carleson measure characterization of $\mathcal{L}_{X,q,s,d}(\rn)$
was also obtained in \cite{zhyy2022}.
However, the boundedness of fractional integrals on
$\mathcal{L}_{X,q,s,d}(\rn)$ is still unknown
even when $X$ is some concrete function spaces, for instance,
the Morrey space, the mixed-norm Lebesgue space,
the local generalized Herz space, and the mixed-norm Herz space.

In this article, we study the boundedness of fractional integrals
on $\mathcal{L}_{X,q,s,d}(\rn)$.
To be precise, assume that
the (powered) Hardy--Littlewood maximal operator satisfies some Fefferman--Stein
vector-valued maximal inequality on $X$ and is bounded on the associate space of $X$.
We first find a reasonable version
$\widetilde{I}_{\alpha}$ of the fractional integral $I_{\alpha}$, with $\alpha\in(0,1)$,
on the ball Campanato-type function space
$\mathcal{L}_{X,q,s,d}(\mathbb{R}^n)$ (see Definition \ref{I-w} below),
which is a generalization of $\wz I_\alpha$ studied by Nakai
\cite{N01} (see also \cite{gv} for a variant on spaces of homogeneous type).
Let $\beta\in(1,\fz)$. Then
we prove that $\widetilde{I}_{\alpha}$ is bounded
from $\mathcal{L}_{X^{\beta},q,s,\beta d}(\mathbb{R}^n)$ to $\mathcal{L}_{X,q,s,d}(\mathbb{R}^n)$
if and only if there exists a positive constant $C$ such that,
for any ball $B\subset \mathbb{R}^n$,
$|B|^{\frac{\alpha}{n}}\leq C
\|\mathbf{1}_B\|_X^{\frac{\beta-1}{\beta}}$,
where $X^{\beta}$ denotes the $\beta$-convexification of $X$
(see Theorem \ref{thm-main-I} below).
Furthermore, we extend the range
$\alpha\in(0,1)$ in $\wz I_\alpha$ to
the range $\alpha\in(0,n)$ and also obtain the
corresponding boundedness in this case
(see Theorem \ref{main-corollary} below).
Moreover, $\widetilde{I}_{\alpha}$ is proved
to be the adjoint operator of $I_{\alpha}$
(see Theorem \ref{dual-I-wI} below).
All these results have a wide range of applications.
Particularly, even when they are
applied, respectively, to
Morrey spaces, mixed-norm Lebesgue spaces,
local generalized Herz spaces, and mixed-norm Herz spaces, all
the obtained results are new. Obviously, due
to the generality and the flexibility, more applications
of these main results of this article are predictable.
Moreover, we refer the reader to \cite{cjy-01}
for studies on fractional integrals on $H_{X}(\rn)$.
To limit the length of this article, the boundedness of Calder\'on--Zygmund
operators on $\mathcal{L}_{X,q,s,d}(\rn)$
will be studied in the forthcoming article
\cite{cjy2022cz}.

One of the main contribution of this work is to prove
that $\widetilde{I}_{\alpha}$ is the adjoint operators of $I_{\alpha}$.
This further strengthens the rationality of the definition of
$\widetilde{I}_{\alpha}$. One key step
is to show that, for any $g\in (H_X(\mathbb{R}^n))^*$ and any
molecule $M$ of $H_X(\mathbb{R}^n)$,
$\langle g,M\rangle=\int_{\rn}g(x)M(x)\,dx$
(see Lemma \ref{thm-dual-T} below). To this end, we skillfully use
the dual theorem on $\mathcal{L}_{X,q,s,d}(\mathbb{R}^n)$ obtained in \cite{cjy-01},
and the special atomic decomposition of molecules of $H_X(\mathbb{R}^n)$
(see Proposition \ref{HK-mole} below) to obtain the above desired conclusion.

To be precise, the remainder of this article is organized as follows.

In Section \ref{s1}, we recall some basic concepts
on the quasi-Banach function space $X$
and the ball Campanato-type space $\mathcal{L}_{X,q,s,d}(\rn)$.
We also present two mild assumptions that the
(powered) Hardy--Littlewood maximal operator satisfies the Fefferman--Stein
vector-valued maximal inequality on $X$ and is bounded on the associate space of $X$.

The main aim of Section \ref{sec-frac-B}
is to study the mapping properties of fractional integrals on
the ball Campanato-type space $\mathcal{L}_{X,q,s,d}(\rn)$.
To be precise, in Subsection \ref{sec-I}, we first give a reasonable version
of the fractional integral $I_{\alpha}$,
with $\alpha\in (0,1)$, on the Campanato space $\mathcal{L}_{X,q,s,d}(\rn)$,
denoted by $\widetilde{I}_{\alpha}$, and  establish a
sufficient and necessary condition on the boundedness of
$\widetilde{I}_{\alpha}$ from $\mathcal{L}_{X^{\beta},q,s,\beta d}(\rn)$
to $\mathcal{L}_{X,q,s,d}(\rn)$ with some $\beta\in(1,\fz)$
(see Theorem \ref{thm-main-I} below).
Then we extend the range of $\alpha\in(0,1)$ in $\wz I_\alpha$ to
the range $\alpha\in(0,n)$ and also obtain the corresponding
boundedness in this case
(see Theorem \ref{main-corollary} below).
In Subsection \ref{sec-I-2}, we prove that $\widetilde{I}_{\alpha}$ is
the adjoint operator of $I_{\alpha}$ (see Theorem \ref{dual-I-wI} below).

In Section \ref{Appli}, we apply all the above main results to four concrete examples
of ball quasi-Banach function spaces,
namely, the Morrey space $M_r^p(\rn)$,
the mixed-norm Lebesgue space $L^{\vec{p}}(\rn)$,
the local generalized Herz space
$\dot{\mathcal{K}}_{\omega,\mathbf{0}}^{p,r}(\mathbb{R}^{n})$,
and the mixed-norm Herz space $\dot{E}^{\vec{\alpha},\vec{p}}_{\vec{q}}(\rn)$,
respectively. Therefore, the boundedness of $\wz I_{\alpha}$,
respectively, on $\mathcal{L}_{M_r^p(\rn),q,s,d}(\rn)$, $\mathcal{L}_{L^{\vec{p}}(\rn),q,s,d}(\rn)$,
$\mathcal{L}_{\dot{\mathcal{K}}_{\omega,\mathbf{0}}^{p,r}(\mathbb{R}^{n}),q,s,d}(\rn)$,
and $\mathcal{L}_{\dot{E}^{\vec{\alpha},\vec{p}}_{\vec{q}}(\rn),q,s,d}(\rn)$
is obtained (see, respectively, Theorems \ref{ap-M}, \ref{apply2},
\ref{apply6}, and \ref{apply8} below).

Finally, we make some conventions on notation. Let
$\nn:=\{1,2,\ldots\}$, $\zz_+:=\nn\cup\{0\}$, and $\mathbb{Z}^n_+:=(\mathbb{Z}_+)^n$.
We always denote by $C$ a \emph{positive constant}
which is independent of the main parameters,
but it may vary from line to line.
The symbol $f\lesssim g$ means that $f\le Cg$.
If $f\lesssim g$ and $g\lesssim f$, we then write $f\sim g$.
If $f\le Cg$ and $g=h$ or $g\le h$,
we then write $f\ls g\sim h$
or $f\ls g\ls h$, \emph{rather than} $f\ls g=h$
or $f\ls g\le h$. For any $s\in\zz_+$, we use $\mathcal{P}_s(\rn)$
to denote the set of all polynomials on $\rn$
with total degree not greater than $s$. We use $\mathbf{0}$ to
denote the \emph{origin} of $\rn$.
For any measurable subset $E$ of $\rn$, we denote by $\mathbf{1}_E$ its
characteristic function. Moreover, for any $x\in\rn$ and $r\in(0,\fz)$,
let $B(x,r):=\{y\in\rn:\ |y-x|<r\}$.
Furthermore, for any $\lambda\in(0,\infty)$
and any ball $B(x,r)\subset\rn$ with $x\in\rn$ and
$r\in(0,\fz)$, let $\lambda B(x,r):=B(x,\lambda r)$.
Finally, for any $q\in[1,\infty]$,
we denote by $q'$ its \emph{conjugate exponent},
namely, $\frac{1}{q}+\frac{1}{q'}=1$.

\section{Preliminaries\label{s1}}

In this section, we recall the definitions of ball quasi-Banach function spaces and their related
ball Campanato-type function spaces.
In what follows, we use $\mathscr M(\rn)$ to denote the set of
all measurable functions on $\rn$.
For any $x\in\rn$ and $r\in(0,\infty)$, let $B(x,r):=\{y\in\rn:\ |x-y|<r\}$ and
\begin{equation}\label{Eqball}
\mathbb{B}(\rn):=\lf\{B(x,r):\ x\in\rn \text{ and } r\in(0,\infty)\r\}.
\end{equation}
The following concept of ball quasi-Banach function
spaces on $\rn$
is just \cite[Definition 2.2]{SHYY}.

\begin{definition}\label{Debqfs}
Let $X\subset\mathscr{M}(\rn)$ be a quasi-normed linear space
equipped with a quasi-norm $\|\cdot\|_X$ which makes sense for all
measurable functions on $\rn$.
Then $X$ is called a \emph{ball quasi-Banach
function space} on $\rn$ if it satisfies:
\begin{enumerate}
\item[$\mathrm{(i)}$] if $f\in\mathscr{M}(\rn)$, then $\|f\|_{X}=0$ implies
that $f=0$ almost everywhere;
\item[$\mathrm{(ii)}$] if $f, g\in\mathscr{M}(\rn)$, then $|g|\le |f|$ almost
everywhere implies that $\|g\|_X\le\|f\|_X$;
\item[$\mathrm{(iii)}$] if $\{f_m\}_{m\in\nn}\subset\mathscr{M}(\rn)$ and $f\in\mathscr{M}(\rn)$,
then $0\le f_m\uparrow f$ almost everywhere as $m\to\infty$
implies that $\|f_m\|_X\uparrow\|f\|_X$ as $m\to\infty$;
\item[$\mathrm{(iv)}$] $B\in\mathbb{B}(\rn)$ implies
that $\mathbf{1}_B\in X$,
where $\mathbb{B}(\rn)$ is the same as in \eqref{Eqball}.
\end{enumerate}

Moreover, a ball quasi-Banach function space $X$
is called a
\emph{ball Banach function space} if it satisfies:
\begin{enumerate}
\item[$\mathrm{(v)}$] for any $f,g\in X$,
\begin{equation*}
\|f+g\|_X\le \|f\|_X+\|g\|_X;
\end{equation*}
\item[$\mathrm{(vi)}$] for any ball $B\in \mathbb{B}(\rn)$,
there exists a positive constant $C_{(B)}$,
depending on $B$, such that, for any $f\in X$,
\begin{equation*}
\int_B|f(x)|\,dx\le C_{(B)}\|f\|_X.
\end{equation*}
\end{enumerate}
\end{definition}

\begin{remark}\label{rem-ball-B}
\begin{enumerate}
\item[$\mathrm{(i)}$] Let $X$ be a ball quasi-Banach
function space on $\rn$. By \cite[Remark 2.6(i)]{yhyy1},
we conclude that, for any $f\in\mathscr{M}(\rn)$, $\|f\|_{X}=0$ if and only if $f=0$
almost everywhere.

\item[$\mathrm{(ii)}$] As was mentioned in
\cite[Remark 2.6(ii)]{yhyy1}, we obtain an
equivalent formulation of Definition \ref{Debqfs}
via replacing any ball $B$ by any
bounded measurable set $E$ therein.

\item[$\mathrm{(iii)}$] We should point out that,
in Definition \ref{Debqfs}, if we
replace any ball $B$ by any measurable set $E$ with
finite measure, we obtain the
definition of (quasi-)Banach function spaces which were originally
introduced in \cite[Definitions 1.1 and 1.3]{BS88}. Thus,
a (quasi-)Banach function space
is also a ball (quasi-)Banach function
space and the converse is not necessary to be true.
\item[$\mathrm{(iv)}$] By \cite[Theorem 2]{dfmn2021},
we conclude that both (ii) and (iii) of
Definition \ref{Debqfs} imply that any ball quasi-Banach
function space is complete.
\end{enumerate}
\end{remark}

The associate space $X'$ of any given ball
Banach function space $X$ is defined as follows
(see \cite[Chapter 1, Section 2]{BS88} or \cite[p.\,9]{SHYY}).

\begin{definition}\label{de-X'}
For any given ball quasi-Banach function space $X$, its \emph{associate space}
 (also called the
\emph{K\"othe dual space}) $X'$ is defined by setting
\begin{equation*}
X':=\lf\{f\in\mathscr M(\rn):\ \|f\|_{X'}<\infty\r\},
\end{equation*}
where, for any $f\in X'$,
$$\|f\|_{X'}:=\sup\lf\{\lf\|fg\r\|_{L^1(\rn)}:\ g\in X,\ \|g\|_X=1\r\},$$
and $\|\cdot\|_{X'}$ is called the \emph{associate norm} of $\|\cdot\|_X$.
\end{definition}

\begin{remark}\label{bbf}
From \cite[Proposition 2.3]{SHYY}, we deduce that, if $X$ is a ball
Banach function space, then its associate space $X'$ is also a ball
Banach function space.
\end{remark}

We also recall the concepts of both the convexity and
the concavity of ball quasi-Banach function spaces,
which are a part of \cite[Definition 2.6]{SHYY}.

\begin{definition}\label{Debf}
Let $X$ be a ball quasi-Banach function space and $p\in(0,\infty)$.
\begin{enumerate}
\item[(i)] The \emph{$p$-convexification} $X^p$ of $X$
is defined by setting
$$X^p:=\lf\{f\in\mathscr M(\rn):\ |f|^p\in X\r\}$$
equipped with the \emph{quasi-norm} $\|f\|_{X^p}:=\|\,|f|^p\,\|_X^{1/p}$ for any $f\in X^p$.

\item[(ii)] The space $X$ is said to be
\emph{concave} if there exists a positive constant
$C$ such that, for any $\{f_k\}_{k\in{\mathbb N}}\subset \mathscr M(\rn)$,
$$\sum_{k=1}^{{\infty}}\|f_k\|_{X}
\le C\left\|\sum_{k=1}^{{\infty}}|f_k|\right\|_{X}.$$
In particular, when $C=1$, $X$ is said to be
\emph{strictly concave}.
\end{enumerate}
\end{definition}

\begin{remark}
It is easy to show that, for any ball quasi-Banach function space $X$ and any $p\in(0,\infty)$, the
$p$-convexification $X^p$ of $X$ is also a ball quasi-Banach function space.
\end{remark}

Next, we present the concept of ball Campanato-type function spaces
associated with ball quasi-Banach function spaces
introduced in \cite[Definition 3.2]{zhyy2022} (see also \cite[Definition 1.11]{yyy20}).
Recall that the \emph{Lebesgue space} $L^q(\rn)$ with
$q\in(0,\infty]$
is defined to be the set of all the measurable functions $f$ on $\rn$
such that
$$\|f\|_{L^q(\rn)}:=
\begin{cases}
\displaystyle
\lf[\int_{\rn}|f(x)|^q\, dx\r]^{\frac{1}{q}}
&\text{if}\quad q\in(0,\fz),\\
\displaystyle
\mathop{\mathrm{ess\,sup}}_{x\in\rn}\,|f(x)|
&\text{if}\quad q=\fz
\end{cases}$$
is finite. For any given $q\in(0,\infty]$,
$L^q_{\mathrm{loc}}(\rn)$ denotes
the set of all the measurable functions $f$ such that
$f\mathbf{1}_E\in L^q(\rn)$
for any bounded measurable set $E\subset \rn$
and, for any $f\in L_{\loc}^1(\rn)$
and any finite measurable subset $E\subset\rn$, let
\begin{equation}\label{fe}
f_E:=\fint_Ef(x)\,dx:=\frac{1}{|E|}\int_E f(x)\,dx.
\end{equation}
Moreover, for any $s\in\zz_+$, we use $\mathcal{P}_s(\rn)$
to denote the set of all the polynomials on $\rn$
with total degree not greater than $s$; for any ball
$B\in\mathbb{B}(\rn)$ and
any locally integrable function $g$ on $\rn$,
$P^{(s)}_B(g)$ denotes the \emph{minimizing polynomial} of
$g$ with total degree not greater than $s$, which means
that $P^{(s)}_B(g)$ is the unique polynomial $f\in\mathcal{P}_s(\rn)$
such that, for any $P\in\mathcal{P}_s(\rn)$,
\begin{align*}
\int_{B}[g(x)-f(x)]P(x)\,dx=0.
\end{align*}

Recall that, in \cite[Definition 1.11]{yyy20}, Yan et al. introduced the following
Campanato space $\mathcal{L}_{X,q,s}(\rn)$ associated with $X$.

\begin{definition}\label{cqb}
Let $X$ be a ball quasi-Banach function space, $q\in[1,\fz)$,
and $s\in\zz_+$. The \emph{Campanato space}
$\mathcal{L}_{X,q,s}(\rn)$, associated with $X$, is defined to be
the set of all the $f\in L^q_{\mathrm{loc}}(\rn)$ such that
$$\|f\|_{\mathcal{L}_{X,q,s}(\rn)}
:=\sup_{B\subset\rn}\frac{|B|}{\|\mathbf{1}_B\|_X}\lf\{
\fint_{B}\lf|f(x)-P^{(s)}_B(f)(x)\r|^q\,dx\r\}^{1/q}<\infty,$$
where the supremum is taken over all balls $B\in\mathbb{B}(\rn)$.
\end{definition}

In what follows, by abuse of notation, we identify
$f\in\mathcal{L}_{X,q,s}(\rn)$ with $f+\mathcal{P}_s(\rn)$.

\begin{remark}
Let $q\in[1,\infty)$, $s\in\zz_+$, and $\alpha\in[0,\fz)$.
Recall that Campanato \cite{C} introduced the \emph{Campanato space}
$\mathcal{C}_{\alpha,q,s}(\rn)$ which is defined to be the
set of all the $f\in L^q_\loc(\rn)$ such that
\begin{align}\label{campanato}
\|f\|_{\mathcal{C}_{\alpha,q,s}(\rn)} \
:=\sup |B|^{-\alpha}\left[\fint_{B}\left|f(x)
-P^{(s)}_B(f)(x)\right|^{q}\right]^{\frac{1}{q}}<\infty,
\end{align}
where the supremum is taken over all open balls (or cubes) $B\subset\rn$.
It is well known that $\mathcal{C}_{\alpha,q,s}(\rn)$ when $\alpha=0$
coincides with the space $\mathrm{BMO}\,(\rn)$.
Moreover, when $X:=L^{\frac{1}{\alpha+1}}(\rn)$, then
$\mathcal{L}_{X,q,s}(\rn)=\mathcal{C}_{\alpha,q,s}(\rn)$.
\end{remark}

\begin{remark}\label{rem-ball-B2}
Let $f\in L^{1}_{\mathrm{loc}}(\rn)$, $s\in\zz_+$, and
$B\in \mathbb{B}(\rn)$. By a claim in \cite[p.\,83]{tw80},
we find that there exists a positive
constant $C$, independent of both $f$ and $B$, such that
\begin{align*}
\left\|P_{B}^{(s)}(f)\right\|_{L^{\fz}(B)}
\le C\fint_B|f(x)|\,dx.
\end{align*}
\end{remark}

Very recently, in \cite[Definition 3.2]{zhyy2022}, Zhang et al.
introduced the following
ball Campanato-type function space $\mathcal{L}_{X,q,s,d}(\rn)$ associated with $X$.

\begin{definition}\label{2d2}
Let $X$ be a ball quasi-Banach function space, $q\in[1,{\infty})$,
$d\in(0,\infty)$,
and $s\in\zz_+$. Then the \emph{ball Campanato-type function space}
$\mathcal{L}_{X,q,s,d}(\rn)$, associated with $X$, is defined to be
the set of all the $f\in L^q_{\mathrm{loc}}({{\rr}^n})$ such that
\begin{align*}
\|f\|_{\mathcal{L}_{X,q,s,d}(\rn)}
:&=\sup
\lf\|\lf\{\sum_{i=1}^m
\lf(\frac{{\lambda}_i}{\|{\mathbf{1}}_{B_i}\|_X}\r)^d
{\mathbf{1}}_{B_i}\r\}^{\frac1d}\r\|_{X}^{-1}\\
&\quad\times\sum_{j=1}^m\frac{{\lambda}_j|B_j|}{\|{\mathbf{1}}_{B_j}
\|_{X}}
\lf[\fint_{B_j}\lf|f(x)-P^{(s)}_{B_j}(f)(x)\r|^q \,dx\r]^\frac1q
\end{align*}
is finite, where the supremum is taken over all
$m\in\nn$, $\{B_j\}_{j=1}^m\subset \mathbb{B}(\rn)$, and
$\{\lambda_j\}_{j=1}^m\subset[0,\infty)$ with
$\sum_{j=1}^m\lambda_j\neq0$.
\end{definition}

In what follows, by abuse of notation, we identify
$f\in\mathcal{L}_{X,q,s,d}(\rn)$ with $f+\mathcal{P}_s(\rn)$.

\begin{remark}\label{rem-ball-B3}
Let $q\in[1,\fz)$, $s\in\zz_+$, $d\in(0,\fz)$, and
$X$ be a ball quasi-Banach function space. It
is easy to show that
\begin{align}\label{sub-01}
\mathcal{L}_{X,q,s,d}({{\rr}^n})\subset\mathcal{L}_{X,q,s}(\rn).
\end{align}
Moreover, when $X$ is concave and $d\in(0,1]$, it was proved in
\cite[Proposition 3.7]{zhyy2022} that
$$\mathcal{L}_{X,q,s,d}({{\rr}^n})=\mathcal{L}_{X,q,s}(\rn)$$
with equivalent quasi-norms.
\end{remark}

In this article, we need the following two mild assumptions
about the
boundedness of the (powered) Hardy--Littlewood maximal operator
on ball quasi-Banach function spaces.
To this end, recall that the \emph{Hardy-Littlewood maximal operator}
$\mathcal{M}$ is defined by setting, for any $f\in
L_{\mathrm{loc}}^1(\rn)$ and
$x\in\rn$,
\begin{align*}
\mathcal{M}(f)(x):=\sup_{B\ni x}\frac{1}{|B|}\int_{B}
|f(y)|dy,
\end{align*}
where the supremum is taken over all the balls $B\in\mathbb{B}(\rn)$
containing $x$.

\begin{assumption}\label{assump1}
Let $X$ be a ball quasi-Banach function space. Assume that there
exists a $p_-\in(0,\infty)$ such that,
for any given $p\in(0,p_-)$ and $u\in(1,\infty)$, there exists a
positive constant $C$ such that,
for any $\{f_j\}_{j=1}^\infty\subset\mathscr M(\rn)$,
\begin{align*}
\lf\|\lf\{\sum_{j\in\nn}\lf[\mathcal{M}(f_j)\r]^u\r\}^{\frac{1}{u}}
\r\|_{X^{1/p}}
\le C\lf\|\lf(\sum_{j\in\nn}|f_j|^u\r)^{\frac{1}{u}}\r\|_{X^{1/p}}.
\end{align*}
\end{assumption}

To present the second assumption, we also need the concept of the power Hardy--Littlewood
maximal operator.
In what follows, for any $\theta\in(0,\infty)$, the \emph{powered Hardy--Littlewood
maximal operator} $\mathcal{M}^{(\theta)}$ is defined by setting,
for any $f\in L_{\loc}^1(\rn)$ and $x\in\rn$,
\begin{align*}
\mathcal{M}^{(\theta)}(f)(x):=\lf\{\mathcal{M}\lf(|f|^\theta\r)(x)\r\}^{\frac{1}{\theta}}.
\end{align*}

\begin{assumption}\label{assump2}
Let $X$ be a ball quasi-Banach function space.
Assume that there exists an $r_0\in(0,\infty)$ and a
$p_0\in(r_0,\infty)$
such that $X^{1/r_0}$ is a ball Banach function space and there exists a positive constant $C$ such that,
for any $f\in(X^{1/r_0})'$,
\begin{align*}
\lf\|\mathcal{M}^{((p_0/r_0)')}(f)\r\|_{(X^{1/r_0})'}\le
C\lf\|f\r\|_{(X^{1/r_0})'}.
\end{align*}
\end{assumption}

\begin{remark}\label{main-remark}
Let both $X$ and $p_-$ satisfy Assumption \ref{assump1}, and
let $d\in(0,\fz)$.
Notice that, for any ball $B\in\mathbb{B}(\rn)$ and any
$\beta\in[1,\fz)$,
$\mathbf{1}_{\beta B}\leq (\beta+1)^{\frac{dn}{r}}
[\mathcal{M}(\mathbf{1}_B)]^{\frac{d}{r}}$
with $r\in(0,\min\{d,p_-\})$. By this and Assumption \ref{assump1},
we easily conclude that, for any $r\in(0,\min\{d,p_-\})$, any
$\beta\in[1,\fz)$,
any sequence $\{B_j\}_{j\in\nn}\subset \mathbb{B}(\rn)$,
and any $\{\lambda_j\}_{j\in\nn}\subset [0,\fz)$,
\begin{align*}
\lf\|\lf(\sum_{j\in\nn}\lambda_j^d\mathbf{1}_{\beta
B_j}\r)^{\frac{1}{d}}\r\|_{X}
\leq(2\beta)^{\frac{n}{r}}\lf\|\lf(\sum_{j\in\nn}
\lambda_j^d\mathbf{1}_{B_j}\r)^{\frac{1}{d}}\r\|_{X}.
\end{align*}
Particularly, for any $r\in(0,p_-)$ and any ball $B\in
\mathbb{B}(\rn)$, we have
\begin{align}\label{key-c}
\lf\|\mathbf{1}_{\beta B}\r\|_{X}
\leq (2\beta)^{\frac{n}{r}}\|\mathbf{1}_{B}\|_{X}.
\end{align}
\end{remark}

\section{Fractional Integrals on
$\mathcal{L}_{X,q,s,d}(\rn)$}\label{sec-frac-B}

This section is divided into two subsections.
In Subsection \ref{sec-I},
we first give a reasonable version $\widetilde{I}_{\alpha}$
of the fractional integral $I_{\alpha}$
on $\mathcal{L}_{X,q,s,d}(\rn)$.
Then we give a sufficient and necessary condition of the boundedness of
$\widetilde{I}_{\alpha}$ from $\mathcal{L}_{X^{\beta},q,s,d}(\rn)$
to $\mathcal{L}_{X,q,s,d}(\rn)$.
In Subsection \ref{sec-I-2}, we prove that $\wz{I}_\alpha$ is
just the adjoint operator of $I_\alpha$, which further
strengthens the rationality of the definition of
$\widetilde{I}_{\alpha}$.

\subsection{Boundedness of Fractional Integrals on $\mathcal{L}_{X,q,s,d}(\rn)$\label{sec-I}}

In this subsection, we study the boundedness of fractional integrals on $\mathcal{L}_{X,q,s,d}(\rn)$.
Recall the concept of the following fractional integral $I_{\alpha}$ with $\alpha\in(0,n)$,
which was first introduced by
Hardy and Littlewood \cite{hl1928} (see also \cite[p.\,117]{EMS1970}).

\begin{definition}\label{Def-I}
Let $\alpha\in(0,n)$ and $p\in[1,\frac{n}{\alpha})$. The fractional integral
$I_{\alpha}$ is defined by setting, for any $f\in L^{p}(\rn)$ and almost every $x\in\rn$,
\begin{align}\label{cla-I}
I_{\alpha}(f)(x)
:=\int_{\rn}\frac{f(y)}{|x-y|^{n-\alpha}}\,dy.
\end{align}
\end{definition}

We now recall the definition of $\wz I_{\alpha}$
which was first introduced in \cite{jtyyz2} to establish the boundedness
of fractional integrals
on special John--Nirenberg--Campanato spaces in \cite{jtyyz1}.
In what follows, for any given $s\in\mathbb{Z}_+$ and
$\lambda\in(s,\infty)$, we always need to consider a function
$f\in L^1_{\loc}(\rn)$ satisfying that,
for any ball $B(x,r)\in\mathbb{B}(\rn)$ with $x\in\rn$ and $r\in(0,\fz)$,
\begin{align}\label{suita}
\int_{\rn\setminus B(x,r)}\frac{|f(y)
-P_{B(x,r)}^{(s)}(f)(y)|}{|x-y|^{n+\lambda}}\,dy<\fz.
\end{align}

\begin{definition}\label{I-w}
Let $s\in\zz_+$, $\alpha\in(0,1)$,
and $B_0:=B(x_0,r_0)\in\mathbb{B}(\rn)$
with $x_0\in\rn$ and $r_0\in (0,\fz)$.
The \emph{modified fractional integral
$\widetilde{I}_{\alpha,B_0}$}
is defined by setting, for any $f\in L^1_{\mathrm{loc}}(\rn)$
satisfying \eqref{suita} with $\lambda:=s+1-\alpha$, and for almost every
$x\in \rn$,
\begin{align}\label{D-Iw}
&\widetilde{I}_{\alpha,B_0}(f)(x)\\
&\quad:=\int_{\rn}\lf[\frac{1}{|x-y|^{n-\alpha}}
-\sum_{\{\gamma\in\zz_+^n:\ |\gamma|\leq s\}}
\frac{\partial_{x}^{\gamma}(\frac{1}{|x-y|^{n-\alpha}})|_{x=x_0}}
{\gamma!}(x-x_0)^{\gamma}
\mathbf{1}_{\rn\setminus B_0}(y)\r]f(y)\,dy.\noz
\end{align}
\end{definition}

\begin{remark}
When $s:=0$, Definition \ref{I-w} coincides with the
$\wz I_\alpha$ studied by Nakai
\cite{N01} (see also \cite{gv} for a variant on spaces of homogeneous type).
\end{remark}

To show that $\widetilde{I}_{\alpha,B_0}(f)$ in \eqref{D-Iw}
is well defined almost everywhere on $\rn$, we need the following
technical lemma which is just a corollary of \cite[Remark 2.5 and
Proposition 2.16]{jtyyz2}; we omit the details here.

\begin{lemma}\label{I-a-x}
Let $s\in\zz_+$, $\alpha\in(0,1)$,
$B_0:=B(x_0,r_0)\in\mathbb{B}(\rn)$ with $x_0\in\rn$
and $r_0\in(0,\fz)$, and
$\widetilde{I}_{\alpha,B_0}$ be the same
as in \eqref{D-Iw}. Then, for any $\gamma\in\zz_+^n$ with
$|\gamma|\leq s$,
$\widetilde{I}_{\alpha,B_0}(y^{\gamma})\in \mathcal{P}_s(\rn)$
after changing values on a set of measure zero.
\end{lemma}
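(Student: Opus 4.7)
The plan is to show directly that the $(s+1)$-th order partial derivatives of $\widetilde{I}_{\alpha,B_0}(y^{\gamma})$ vanish identically as a function of $x$, which, together with regularity, forces it to agree almost everywhere with an element of $\mathcal{P}_s(\rn)$. As a preliminary step, I would check that the integral defining $\widetilde{I}_{\alpha,B_0}(y^{\gamma})(x)$ is well-defined: since $|\gamma|\le s$, the monomial $y^{\gamma}$ equals $P^{(s)}_{B_0}(y^{\gamma})$, and so the a~priori integrability condition \eqref{suita} with $\lambda:=s+1-\alpha$ is trivially satisfied (the integrand vanishes). One then splits \eqref{D-Iw} into the piece over $B_0$, where only the kernel $|x-y|^{\alpha-n}$ appears, and the piece over $\rn\setminus B_0$, where the integrand is the Taylor remainder
\[
R_s(x,y):=\frac{1}{|x-y|^{n-\alpha}}-\sum_{|\gamma'|\le s}\frac{\partial_x^{\gamma'}(|x-y|^{\alpha-n})\big|_{x=x_0}}{\gamma'!}(x-x_0)^{\gamma'};
\]
the bound $|R_s(x,y)|\lesssim|x-x_0|^{s+1}|x_0-y|^{\alpha-n-s-1}$, valid for $|x_0-y|$ sufficiently large, ensures convergence at infinity, while the $B_0$-piece is an absolutely convergent locally integrable function of $x$.

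Next, I would justify differentiation under the integral sign on compact sets of $x$, using the explicit decay of $\partial_x^\beta R_s(x,y)$ on $\rn\setminus B_0$ and the local integrability of $\partial_x^\beta(|x-y|^{\alpha-n})y^{\gamma}$ on $B_0$ (in a principal-value sense if necessary). Fix now $\beta\in\zz_+^n$ with $|\beta|=s+1$. The subtracted sum in \eqref{D-Iw} is a polynomial in $x$ of total degree at most $s$, hence contributes $0$ upon applying $\partial_x^\beta$. Therefore
\[
\partial_x^\beta\widetilde{I}_{\alpha,B_0}(y^{\gamma})(x)
=\int_{\rn}\partial_x^\beta\!\left(\frac{1}{|x-y|^{n-\alpha}}\right)y^{\gamma}\,dy.
\]
Performing the translation $z:=y-x$ and integrating by parts $|\beta|$ times (the boundary terms vanish because $|z|^{\alpha-n}$ and its derivatives decay at infinity and the integrand is compactly supported away from $z=0$ up to a principal value), this equals
\[
\int_{\rn}\frac{1}{|z|^{n-\alpha}}\,\partial_z^\beta(x+z)^{\gamma}\,dz=0,
\]
since $|\beta|=s+1>s\ge|\gamma|$ forces $\partial_z^\beta(x+z)^{\gamma}\equiv 0$.

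Having shown that every $(s+1)$-th partial derivative of $\widetilde{I}_{\alpha,B_0}(y^{\gamma})$ vanishes on $\rn$, I would invoke the standard fact that a distribution (equivalently, a locally integrable function) all of whose partial derivatives of order $s+1$ are zero is, modulo a null set, a polynomial of total degree at most $s$. This yields $\widetilde{I}_{\alpha,B_0}(y^{\gamma})\in\mathcal{P}_s(\rn)$, as required; indeed, this is precisely the content of \cite[Remark 2.5 and Proposition 2.16]{jtyyz2}, to which the paper defers the details.

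The main obstacle is the technical justification of interchanging $\partial_x^\beta$ with the integral and of the integration by parts, since the integrand $\partial_x^\beta(|x-y|^{\alpha-n})y^{\gamma}$ is only conditionally convergent (decay is borderline at infinity and there is a non-integrable singularity at $y=x$ once $|\beta|\ge 1$). The cleanest remedy is to work with a smooth cutoff $\chi_R$ supported in $B(x,R)\setminus B(x,1/R)$, perform the integration by parts and the translation argument for the regularized integral, and then pass to the limit $R\to\infty$ using the decay estimate on $R_s(x,y)$ together with the dominated convergence theorem; the non-regularized terms coming from the cutoff are precisely absorbed by the Taylor-polynomial correction built into \eqref{D-Iw}, which is the whole raison d'\^etre of the modified fractional integral.
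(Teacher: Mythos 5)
Your overall strategy (show that all distributional derivatives of order $s+1$ vanish, then invoke the fact that such a locally integrable function is a.e.\ a polynomial of degree at most $s$) is sound, and your preliminary observations (well-definedness via the remainder estimate $|R_s(x,y)|\lesssim |x-x_0|^{s+1}|x_0-y|^{\alpha-n-s-1}$, and the fact that the subtracted polynomial dies under $\partial_x^\beta$ with $|\beta|=s+1$) are correct. The gap is in the central identity
$\partial_x^\beta\widetilde{I}_{\alpha,B_0}(y^{\gamma})(x)
=\int_{\rn}\partial_x^\beta(|x-y|^{\alpha-n})\,y^{\gamma}\,dy$
and the subsequent integration by parts. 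First, the right-hand side is not a conditionally convergent integral that a principal value rescues: near $z=y-x=0$ the integrand is homogeneous of degree $\alpha-n-(s+1)+|\delta|$ against each monomial $z^\delta$ with $|\delta|\le|\gamma|\le s$, the radial integral $\int_\epsilon r^{\alpha-(s+1)+|\delta|-1}\,dr$ diverges for every such $\delta$, and the angular averages $\int_{S^{n-1}}\partial^\beta(|z|^{\alpha-n})\big|_{|z|=1}\,\omega^\delta\,d\sigma(\omega)$ do not all vanish (only those of the wrong parity do; already $n=1$, $s=1$, $\beta=2$, $\delta=0$ gives a nonzero average). So a symmetric excision does not converge, and a finite-part regularization is forced. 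Second, your diagnosis that the leftover cutoff terms are ``absorbed by the Taylor-polynomial correction built into \eqref{D-Iw}'' is not correct: that correction is centered at $x_0$ and controls the behavior of the kernel as $y\to\infty$; it is entirely inert with respect to the singularity at $y=x$, which is where your divergences live. Third, $\widetilde{I}_{\alpha,B_0}(y^{\gamma})$ is only H\"older continuous across $\partial B_0$ (the piece $I_\alpha(\mathbf{1}_{B_0}y^\gamma)$ is not classically $C^{s+1}$ there), so the pointwise differentiation you perform is unavailable on all of $\rn$; and knowing the function is a polynomial separately on the inside and outside of $\partial B_0$ plus continuity does not identify the two polynomials, since a sphere is not a uniqueness set for $\mathcal{P}_s(\rn)$ when $s\ge 2$.

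The repair is to run the derivative computation weakly from the start, so that no derivative ever lands on the kernel at its singularity. For $\phi\in C^\infty_c(\rn)$ and $|\beta|=s+1$, absolute convergence of the double integral (locally uniform in $x$ by the remainder estimate) justifies Fubini, the subtracted polynomial in $x$ is annihilated by $\int(\cdot)\,\partial^\beta\phi\,dx$, and one is left with
$\langle\partial^\beta\widetilde{I}_{\alpha,B_0}(y^{\gamma}),\phi\rangle
=(-1)^{|\beta|}\int_{\rn}y^{\gamma}\,I_\alpha(\partial^\beta\phi)(y)\,dy$.
Since $I_\alpha(\partial^\beta\phi)=\partial^\beta(I_\alpha\phi)$ with $I_\alpha\phi$ smooth, one integrates by parts in $y$ to move $\partial^\beta$ onto $y^{\gamma}$; here the boundary terms at $|y|=R$ are $O(R^{|\gamma|-s-1+\alpha})=O(R^{\alpha-1})\to 0$ because $\partial^{\beta''}\phi$ has vanishing moments up to order $|\beta''|-1$ and hence $I_\alpha(\partial^{\beta''}\phi)(y)=O(|y|^{\alpha-n-|\beta''|})$, and the bulk term vanishes since $|\beta|>|\gamma|$. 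This is where the hypothesis $\alpha\in(0,1)$ is genuinely used. As written, your argument asserts the conclusion of this computation without a mechanism that actually tames the singularity at $y=x$, which is the whole technical content of the cited \cite[Proposition 2.16]{jtyyz2}.
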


We also need the following well-known Hardy--Littlewood--Sobolev theorem
which was first established by Hardy and Littlewood \cite{hl1928}
and Sobolev \cite{s1938} (see also \cite[p.\,119]{EMS1970}).

\begin{lemma}\label{fractional}
Let $\alpha\in(0,n)$ and $I_{\alpha}$ be the same as in \eqref{cla-I}.
Let $p\in[1,\frac{n}{\alpha})$ and $q\in(1,\infty)$ with
$\frac{1}{q}:=\frac{1}{p}-\frac{\alpha}{n}$.
\begin{enumerate}
\item[\rm (i)]
For any $f\in L^p(\rn)$, $I_\alpha(f)(x)$ is well defined for
almost every $x\in\rn$.

\item[\rm (ii)]
If $p\in(1,\frac{n}{\alpha})$, then $I_\alpha$ is bounded from
$L^p(\rn)$
to $L^{q}(\rn)$, namely, there exists a
positive constant $C$ such that, for any $f\in L^p(\rn)$,
$$\lf\|I_\alpha(f)\r\|_{L^{q}(\rn)}\leq
C\|f\|_{L^p(\rn)}.$$
\end{enumerate}
\end{lemma}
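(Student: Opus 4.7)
The plan is to establish the classical Hardy--Littlewood--Sobolev inequality via Hedberg's pointwise bound, which reduces part (ii) to the $L^p$ boundedness of the Hardy--Littlewood maximal operator $\mathcal{M}$ and delivers part (i) almost for free in the range $p>1$. First I would fix $f\in L^p(\rn)$ with $p\in[1,n/\alpha)$, a point $x\in\rn$, and a free parameter $\delta\in(0,\infty)$, and split the defining integral of $I_\alpha(f)(x)$ into the near part over $E_1:=\{y\in\rn:|x-y|\leq\delta\}$ and the far part over $E_2:=\rn\setminus E_1$.

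For the near part I would decompose $E_1$ into dyadic annuli $A_k:=\{y\in\rn:2^{-k-1}\delta<|x-y|\leq 2^{-k}\delta\}$ with $k\in\zz_+$, bound $|x-y|^{\alpha-n}$ by a multiple of $(2^{-k}\delta)^{\alpha-n}$ on each $A_k$, and recognize the resulting average of $|f|$ over $B(x,2^{-k}\delta)$ as controlled by $\mathcal{M}(f)(x)$; summing the geometric series in $k$ then yields
$$\int_{E_1}\frac{|f(y)|}{|x-y|^{n-\alpha}}\,dy\ls \delta^{\alpha}\mathcal{M}(f)(x).$$
For the far part I would invoke H\"older's inequality with exponents $p$ and $p'$:
$$\int_{E_2}\frac{|f(y)|}{|x-y|^{n-\alpha}}\,dy\leq \|f\|_{L^p(\rn)}\lf[\int_{|z|>\delta}|z|^{(\alpha-n)p'}\,dz\r]^{1/p'}\sim \delta^{\alpha-n/p}\|f\|_{L^p(\rn)},$$
where the radial integral converges precisely because $p<n/\alpha$ forces $(n-\alpha)p'>n$. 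Balancing the two estimates by choosing $\delta^{n/p}\sim \|f\|_{L^p(\rn)}/\mathcal{M}(f)(x)$ and using the identity $1-\alpha p/n=p/q$ produces the Hedberg-type pointwise inequality
$$|I_\alpha(f)(x)|\ls [\mathcal{M}(f)(x)]^{p/q}\|f\|_{L^p(\rn)}^{1-p/q}.$$

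Part (ii) is then immediate: raising both sides to the $q$-th power, integrating over $\rn$, and invoking the classical $L^p$ boundedness of $\mathcal{M}$ for $p\in(1,\infty)$ gives $\|I_\alpha(f)\|_{L^q(\rn)}\ls \|f\|_{L^p(\rn)}$. The same pointwise bound also settles part (i) whenever $p>1$, since $\mathcal{M}(f)$ is then almost everywhere finite.

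The case $p=1$ of (i) is the only genuinely delicate point, since $\mathcal{M}$ is merely of weak type $(1,1)$ and the Hedberg bound no longer produces an a.e.\ finite majorant. Here I would argue directly via Fubini's theorem: for any ball $B\subset\rn$,
$$\int_B\int_{\rn}\frac{|f(y)|\mathbf{1}_{\{|x-y|\leq 1\}}}{|x-y|^{n-\alpha}}\,dy\,dx\leq \|f\|_{L^1(\rn)}\sup_{y\in\rn}\int_{|z|\leq 1}|z|^{\alpha-n}\,dz<\infty,$$
the inner integral being finite because $\alpha>0$, while on $\{|x-y|>1\}$ the kernel is bounded by $1$ so that contribution is at most $\|f\|_{L^1(\rn)}$ at every $x$. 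Hence $I_\alpha(|f|)$ is locally integrable on $\rn$, and therefore finite almost everywhere. The main obstacle I anticipate is merely the bookkeeping in the Hedberg balance, so that the exponents align to $p/q$ and $1-p/q$; once that is arranged everything reduces to standard dyadic estimates, H\"older's inequality, and the maximal inequality on $L^p(\rn)$.
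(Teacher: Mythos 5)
Your proof is correct. Note that the paper does not prove Lemma \ref{fractional} at all: it is recalled as the classical Hardy--Littlewood--Sobolev theorem with citations to Hardy--Littlewood, Sobolev, and Stein's book, so there is no in-paper argument to compare against. What you have written is the standard modern (Hedberg) proof: the dyadic-annuli bound for the near part, H\"older for the far part (with the convergence condition $(n-\alpha)p'>n$ correctly traced back to $p<n/\alpha$), the balance $\delta^{n/p}\sim\|f\|_{L^p(\rn)}/\mathcal{M}(f)(x)$ giving the exponents $p/q$ and $1-p/q$, and the reduction of (ii) to the $L^p$-boundedness of $\mathcal{M}$. Your separate treatment of $p=1$ in (i) via Fubini and local integrability of $I_\alpha(|f|)$ is also sound and is indeed necessary, since the Hedberg majorant is only weak-type at $p=1$.
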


Now, we show that $\widetilde{I}_{\alpha,B_0}(f)$ in \eqref{D-Iw} is
well defined; since its proof is quite similar to that of
\cite[Proposition 2.19]{jtyyz2}, we only sketch some important steps.

\begin{proposition}\label{I-w-h}
Let all the symbols be the same as in Definition \ref{I-w}.
Then, for any $f\in L^1_{\mathrm{loc}}(\rn)$
satisfying \eqref{suita} with $\lambda:=s+1-\alpha$,
$\widetilde{I}_{\alpha,B_0}(f)$ in \eqref{D-Iw}
is well defined almost everywhere on $\rn$.
\end{proposition}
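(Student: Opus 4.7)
The plan is to adapt the argument of \cite[Proposition 2.19]{jtyyz2} to the present setting. Fix $x\in\rn$ and choose $R\in(r_0,\fz)$ large enough that $B_0\subset B(x_0,R)=:\wz{B}$ and $|x-x_0|\leq R/2$; since it suffices to prove convergence for almost every $x$, I may assume $x\neq x_0$. Writing $K(x,y):=|x-y|^{\alpha-n}$ and $P_s(x,y):=\sum_{\{\gamma\in\zz_+^n:\ |\gamma|\leq s\}}(\gamma!)^{-1}[\partial_x^\gamma K(x,y)]|_{x=x_0}(x-x_0)^\gamma$ for the degree-$s$ Taylor polynomial of $K(\cdot,y)$ at $x_0$, I would split $\wz{I}_{\alpha,B_0}(f)(x)=\mathrm{I}(x)+\mathrm{II}(x)+\mathrm{III}(x)$, where
\[
\mathrm{I}(x):=\int_{\wz{B}}K(x,y)f(y)\,dy,\quad \mathrm{II}(x):=-\int_{\wz{B}\setminus B_0}P_s(x,y)f(y)\,dy,
\]
and $\mathrm{III}(x):=\int_{\rn\setminus\wz{B}}[K(x,y)-P_s(x,y)]f(y)\,dy$, and verify absolute convergence of each piece for almost every such $x$.

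For $\mathrm{I}$, since $f\mathbf{1}_{\wz{B}}\in L^1(\rn)$, Lemma \ref{fractional}(i) applied with $p=1$ shows that $\mathrm{I}(x)=I_\alpha(f\mathbf{1}_{\wz{B}})(x)$ is defined for almost every $x\in\rn$. For $\mathrm{II}$, any $y\in\wz{B}\setminus B_0$ satisfies $|x_0-y|\geq r_0$, so the derivatives $[\partial_x^\gamma K(x,y)]|_{x=x_0}$ with $|\gamma|\leq s$ are uniformly bounded on $\wz{B}\setminus B_0$ by constants depending only on $r_0$, $n$, $\alpha$, and $s$. Consequently $y\mapsto P_s(x,y)$ is bounded on $\wz{B}\setminus B_0$, and hence $|\mathrm{II}(x)|\lesssim\|f\|_{L^1(\wz{B})}<\fz$.

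The main work goes into $\mathrm{III}$. For $y\in\rn\setminus\wz{B}$ we have $|y-x_0|\geq R\geq 2|x-x_0|$, so Taylor's theorem applied in the $x$ variable to $K(\cdot,y)$ on the segment from $x_0$ to $x$, together with the pointwise bound $|\partial_x^\gamma K(\xi,y)|\lesssim|\xi-y|^{\alpha-n-|\gamma|}$ and the triangle inequality $|\xi-y|\geq|y-x_0|/2$ for $\xi$ on this segment, yields
\[
|K(x,y)-P_s(x,y)|\lesssim\frac{|x-x_0|^{s+1}}{|x_0-y|^{n-\alpha+s+1}}.
\]
It therefore suffices to prove that $\int_{\rn\setminus\wz{B}}|f(y)||x_0-y|^{-(n+s+1-\alpha)}\,dy<\fz$. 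I would decompose $f=[f-P^{(s)}_{\wz{B}}(f)]+P^{(s)}_{\wz{B}}(f)$: the first summand is controlled by hypothesis \eqref{suita} applied with the ball $\wz{B}=B(x_0,R)$ and $\lambda:=s+1-\alpha$, while for the polynomial summand I combine the standard extrapolation bound $|P^{(s)}_{\wz{B}}(f)(y)|\lesssim(1+|y-x_0|/R)^s\|P^{(s)}_{\wz{B}}(f)\|_{L^\fz(\wz{B})}$ with Remark \ref{rem-ball-B2} to reduce the tail integral to a constant multiple of $\int_{\rn\setminus\wz{B}}|x_0-y|^{-(n+1-\alpha)}\,dy$, which converges because $\alpha\in(0,1)$ and hence $1-\alpha>0$.

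The crux of the argument lies in $\mathrm{III}$: neither $\int_{\rn\setminus\wz{B}}K(x,y)f(y)\,dy$ nor $\int_{\rn\setminus\wz{B}}P_s(x,y)f(y)\,dy$ need converge in isolation, so both the Taylor-type cancellation encoded in $K-P_s$ and the subsequent subtraction of the minimizing polynomial $P^{(s)}_{\wz{B}}(f)$, which is what makes \eqref{suita} applicable, are needed in order to extract the decay gap of order $1-\alpha>0$ required for convergence at infinity.
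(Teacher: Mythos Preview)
Your argument is correct and is a cleaner, more direct route to mere well-definedness than the one the paper takes. You center everything at the fixed point $x_0$, use a single large ball $\wz B=B(x_0,R)$, and split into a local Riesz potential $\mathrm{I}$, a bounded correction $\mathrm{II}$ on the annulus $\wz B\setminus B_0$, and a tail $\mathrm{III}$ controlled by the Taylor remainder together with \eqref{suita} and the polynomial growth bound. One cosmetic point: since your choice of $R$ depends on $x$, you should phrase the argument as ``for each fixed $R$, the integral converges absolutely for a.e.\ $x\in B(x_0,R/2)$'' and then take a countable sequence $R\to\infty$; this is implicit in what you wrote but worth making explicit.

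The paper instead fixes an arbitrary ball $B=B(z,r)$ and decomposes $\wz I_{\alpha,B_0}(f)=F_B^{(1)}+F_B^{(2)}+P_{B,B_0}^{(1)}+P_{B,B_0}^{(2)}$, expanding the kernel around $z$ rather than $x_0$ and subtracting $P_{2B}^{(s)}(f)$ rather than $P_{\wz B}^{(s)}(f)$. The extra bookkeeping is not for the sake of Proposition \ref{I-w-h} itself: the point is to isolate the two pieces $P_{B,B_0}^{(1)}$ and $P_{B,B_0}^{(2)}$ and verify they lie in $\mathcal{P}_s$ (see \eqref{2.27x}--\eqref{2.27xx}). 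This decomposition is then reused verbatim in \eqref{Iw-B-01} to estimate the Campanato norm of $\wz I_\alpha(f)$, where knowing that these two terms are polynomials of degree $\le s$ lets one discard them via \eqref{int-P}. Your approach proves the proposition more economically, but you would still need to revisit the decomposition when you reach Theorem \ref{main-Theorem2}.
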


\begin{proof}
Let all the symbols be the same as in the present proposition
and let $B:=B(z,r)\in\mathbb{B}(\rn)$ with $z\in\rn$ and $r\in(0,\fz)$.
By the definition of $\widetilde{I}_{\alpha,B_0}$, we write that,
for any $f\in L^1_{\mathrm{loc}}(\rn)$
satisfying \eqref{suita} with $\lambda:=s+1-\alpha$,
and for almost every $x\in B$,
\begin{align}\label{I-1}
\widetilde{I}_{\alpha,B_0}(f)(x)
=F_{B}^{(1)}(x)+F_{B}^{(2)}(x)
+P_{B,B_0}^{(1)}(x)+P_{B,B_0}^{(2)}(x),
\end{align}
where
\begin{align}\label{2.22x}
F_{B}^{(1)}(x):=\int_{2B}\frac{f(y)-P_{2B}^{(s)}(f)(y)}
{|x-y|^{n-\alpha}}\,dy,
\end{align}

\begin{align}\label{2.22y}
F_{B}^{(2)}(x):&=\int_{\rn\setminus 2B}
\lf[\frac{1}{|x-y|^{n-\alpha}}
-\sum_{\{\gamma\in\zz_+^n:\ |\gamma|\leq s\}}
\frac{\partial_{x}^{\gamma}(\frac{1}{|x-y|^{n-\alpha}})|_{x=z}}
{\gamma!}(x-z)^{\gamma}\r]\\
&\quad\times\lf[f(y)-P_{2B}^{(s)}(f)(y)\r]\,dy,\noz
\end{align}

\begin{align}\label{2.22z}
P_{B,B_0}^{(1)}(x):=\int_{\rn}
K_{B,B_0}(x,y)\lf[f(y)-P_{2B}^{(s)}(f)(y)\r]\,dy
\end{align}
with
\begin{align*}
K_{B,B_0}(x,y)
:&=\sum_{\{\gamma\in\zz_+^n:\ |\gamma|\leq s\}}
\frac{\partial_{x}^{\gamma}(\frac{1}{|x-y|^{n-\alpha}})|_{x=z}}
{\gamma!}(x-z)^{\gamma}\mathbf{1}_{\rn\setminus 2B}(y)\\
&\quad-\sum_{\{\gamma\in\zz_+^n:\ |\gamma|\leq s\}}
\frac{\partial_{x}^{\gamma}(\frac{1}{|x-y|^{n-\alpha}})|
_{x=x_0}}{\gamma!}(x-x_0)^{\gamma}
\mathbf1_{\rn\setminus B_0}(y),
\end{align*}
and
\begin{align}\label{2.22w}
P_{B,B_0}^{(2)}(x):&=\int_{\rn}
\lf[\frac{1}{|x-y|^{n-\alpha}}
-\sum_{\{\gamma\in\zz_+^n:\ |\gamma|\leq s\}}
\frac{\partial_{x}^{\gamma}(\frac{1}{|x-y|^{n-\alpha}})|
_{x=x_0}}{\gamma!}(x-x_0)^{\gamma}
\mathbf1_{\rn\setminus B_0}(y)\r]\\
&\quad\times P_{2B}^{(s)}(f)(y)\,dy.\noz
\end{align}
Thus, to prove that $\widetilde{I}_{\alpha,B_0}(f)$ is well defined
almost everywhere on $B$, it suffices to show that
$F_{B}^{(1)}$, $F_{B}^{(2)}$, $P_{B,B_0}^{(1)}$,
and $P_{B,B_0}^{(2)}$ are all well defined
almost everywhere on $B$.

We first consider $P_B^{(1)}$.
Indeed, it is easy to show that
$[f-P_{2B}^{(s)}(f)]\mathbf1_{2B}\in L^1(\rn)$.
From this and Lemma \ref{fractional}(i), we deduce that
$F_{B}^{(1)}(x)$ is well defined
for almost every $x\in B$. In addition, by \cite[(2.24)]{jtyyz2}
and \eqref{suita} with $\lambda:=s+1-\alpha$, we find that, for any $x\in B$,
\begin{align}\label{I-E-1}
\lf|F_{B}^{(2)}(x)\r|
\lesssim r^{s+1}\int_{\rn\setminus 2B}
\frac{|f(y)-P_{2B}^{(s)}(f)(y)|}{|y-z|^{n+s+1-\alpha}}\,dy<\fz.
\end{align}
Moreover, from \eqref{suita} and an argument similar to that used in the estimation of
\cite[(2.26)]{jtyyz2}, we infer that
$P_{B,B_0}^{(1)}(x)$ for any $x\in B$ is well defined and
\begin{align}\label{2.27x}
P_{B,B_0}^{(1)}\in \mathcal{P}_{s}(B).
\end{align}
Finally, Lemma \ref{I-a-x} implies that $P_{B,B_0}^{(2)}(x)$ for almost every $x\in B$
is well defined and
\begin{align}\label{2.27xx}
P_{B,B_0}^{(2)}\in \mathcal{P}_{s}(B)
\end{align}
after changing values on a set of measure zero.
Then we conclude that $\widetilde{I}_{\alpha,B_0}(f)$ is well defined almost
everywhere on $B$.
This, together with the arbitrariness
of $B$, then finishes the proof of Proposition \ref{I-w-h}.
\end{proof}

\begin{remark}\label{rem-I-B}
Let all the symbols be the same as in Definition \ref{I-w}.
\begin{enumerate}
\item[\rm (i)]
By \cite[Remark 2.20]{jtyyz2}, we conclude that, for any given ball $B_1\in\mathbb{B}(\rn)$ and
for any $f$ satisfying \eqref{suita} with $\lambda:=s+1-\alpha$,
$$
\widetilde{I}_{\alpha,B_0}(f)-\widetilde{I}_{\alpha,B_1}(f)\in
\mathcal{P}_s(\rn)$$
after changing values on a set of measure zero.
Based on this, in what follows, we write
$\widetilde{I}_{\alpha}$
instead of $\widetilde{I}_{\alpha,B_0}$ if there exists no confusion.
Moreover, for any given $\alpha\in[1,n)$ and for any
suitable measurable function $g$ on $\rn$, we define
\begin{align}\label{de-wsIn}
\wz I_\alpha(g):=\lf(\wz I_{\frac{\alpha}{n}}\r)^n(g)
:=\lf(\wz I_{\frac{\alpha}{n}}\circ\cdots\circ
\wz I_{\frac{\alpha}{n}}\r)(g).
\end{align}

\item[\rm (ii)]
Let $q\in[1,\frac{n}{\alpha})$ and $f\in L^q(\rn)$. By
\cite[Proposition 2.12]{jtyyz2}, we know that
$\wz I_\alpha(f)-I_{\alpha}(f)\in \mathcal{P}_s(\rn)$
after changing values on a set of measure zero.
\end{enumerate}
\end{remark}

Next, we present the main result of this subsection.

\begin{theorem}\label{thm-main-I}
Let both $X$ and $p_-$ satisfy Assumption \ref{assump1}.
Assume that $X$, $r_0\in(0,\min\{\frac{1}{\beta},p_-\})$
with $\beta\in(1,\fz)$, and $p_0\in(r_0,\infty)$
satisfy Assumption \ref{assump2}.
Let $\alpha\in(0,1)$, $d:=r_0$, integer
\begin{align*}
s\geq\max\lf\{\lf\lfloor\frac{n}
{\min\{1,p_-\}}-n\r\rfloor,
\lf\lfloor\frac{n}{\min\{1,\beta p_-\}}-1\r\rfloor,
\lf\lfloor\frac{n}{\min\{1,d\}}-n+\alpha\r\rfloor\r\},
\end{align*}
$I_\alpha$ be the same
as in \eqref{cla-I}, and $\wz{I}_\alpha$ the same as in
Remark \ref{rem-I-B}(i).
Further assume that $X$ has an absolutely continuous quasi-norm.
Then $\widetilde{I}_{\alpha}$ is bounded from
$\mathcal{L}_{X^{\beta},1,s,{\beta d}}(\rn)$
to $\mathcal{L}_{X,1,s,d}(\rn)$, namely,
there exists a positive constant $C$ such that, for any
$f\in \mathcal{L}_{X^{\beta},1,s,{\beta d}}(\rn)$,
\begin{align*}
\lf\|\widetilde{I}_{\alpha}(f)
\r\|_{\mathcal{L}_{X,1,s,d}(\rn)}
\leq C \|f\|_{\mathcal{L}_{X^{\beta},1,s,{\beta d}}(\rn)}
\end{align*}
if and only if there exists a positive constant $\wz C$
such that, for any ball $B\in\mathbb{B}(\rn)$,
\begin{align}\label{3.12x}
|B|^{\frac{\alpha}{n}}\leq \wz C
\|\mathbf{1}_B\|_X^{\frac{\beta-1}{\beta}}.
\end{align}
\end{theorem}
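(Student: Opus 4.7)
The plan is to prove both the necessity and sufficiency directions; the sufficiency is the substantive direction, while the necessity follows from testing on a single ball. For the sufficiency ($\Leftarrow$), I would estimate the $\mathcal{L}_{X,1,s,d}(\rn)$-numerator of $\wz{I}_\alpha(f)$ for an arbitrary admissible collection $\{(B_j,\lambda_j)\}_{j=1}^{m}$ and bound it by $\|f\|_{\mathcal{L}_{X^\beta,1,s,\beta d}(\rn)}$ times the corresponding denominator. Applying Proposition \ref{I-w-h} with $B:=B_j$, the polynomial pieces $P^{(1)}_{B_j,B_0}$ and $P^{(2)}_{B_j,B_0}$ are absorbed by $P^{(s)}_{B_j}(\wz{I}_\alpha(f))$, leaving the local part $F^{(1)}_{B_j}$ (handled by Fubini and the standard estimate $\int_{B_j}|x-y|^{\alpha-n}\,dx\ls|B_j|^{\alpha/n}$) and the far-field part $F^{(2)}_{B_j}$ (handled via Taylor's theorem on the kernel $|\cdot|^{\alpha-n}$ at $z_{B_j}$ together with the annular decomposition $\rn\setminus2B_j=\bigcup_{k\geq1}(2^{k+1}B_j\setminus2^{k}B_j)$ and a telescoping control of the differences $P^{(s)}_{2^{i+1}B_j}(f)-P^{(s)}_{2^{i}B_j}(f)$ by the oscillations). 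This yields the key estimate
\[
\fint_{B_j}\lf|\wz{I}_\alpha(f)-P^{(s)}_{B_j}(\wz{I}_\alpha(f))\r|
\ls |B_j|^{\alpha/n}\sum_{k=1}^{\fz}2^{-k(s+1-\alpha)}
\fint_{2^{k+1}B_j}\lf|f-P^{(s)}_{2^{k+1}B_j}(f)\r|,
\]
where the explicit lower bound on $s$ ensures the convergence of the geometric series.

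Inserting this bound into the $\mathcal{L}_{X,1,s,d}(\rn)$-numerator and interchanging summations, for each fixed $k$ I would invoke the ball condition $|B_j|^{\alpha/n}\leq\wz{C}\|\mathbf{1}_{B_j}\|_X^{(\beta-1)/\beta}$ together with the identity $\|\mathbf{1}_{B_j}\|_{X^\beta}=\|\mathbf{1}_{B_j}\|_X^{1/\beta}$ to convert $|B_j|^{\alpha/n}/\|\mathbf{1}_{B_j}\|_X$ into a constant multiple of $1/\|\mathbf{1}_{B_j}\|_{X^\beta}$, and then migrate from the weight $\|\mathbf{1}_{B_j}\|_{X^\beta}$ to $\|\mathbf{1}_{2^{k+1}B_j}\|_{X^\beta}$ via \eqref{key-c} applied to $X^\beta$ (at the cost of $2^{kn/r}$ for some suitable $r$). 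The inner sum then has exactly the shape of the $\mathcal{L}_{X^\beta,1,s,\beta d}(\rn)$-numerator of $f$ tested against the dilated collection $\{(2^{k+1}B_j,\lambda_j)\}$, so the definition of $\|f\|_{\mathcal{L}_{X^\beta,1,s,\beta d}(\rn)}$ bounds it by $\|f\|_{\mathcal{L}_{X^\beta,1,s,\beta d}(\rn)}$ times the corresponding $(X^\beta,\beta d)$-denominator. To close the estimate, I would compare this denominator to the original $(X,d)$-denominator on the collection $\{(B_j,\lambda_j)\}$ using Remark \ref{main-remark} for $X^\beta$ in conjunction with the $\beta$-convexification identity $\|g\|_{X^\beta}=\||g|^\beta\|_X^{1/\beta}$; summing over $k$, the geometric factor $2^{-k(s+1-\alpha)}$ dominates all dilation factors and produces the asserted bound.

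For the necessity ($\Rightarrow$), I would fix an arbitrary ball $B_0\in\mathbb{B}(\rn)$ and test the hypothesized boundedness on a function $f_0$ constructed so that $\|f_0\|_{\mathcal{L}_{X^\beta,1,s,\beta d}(\rn)}\ls\|\mathbf{1}_{B_0}\|_{X^\beta}^{-1}$, while $\wz{I}_\alpha(f_0)-P^{(s)}_{B_0}(\wz{I}_\alpha(f_0))$ has absolute value of order $|B_0|^{\alpha/n}$ on a definite portion of $B_0$. A typical choice is a normalized bump or characteristic-like function supported near $B_0$ with polynomial modifications to guarantee the correct cancellations; evaluating both norms yields $\|\wz{I}_\alpha(f_0)\|_{\mathcal{L}_{X,1,s,d}(\rn)}\gtrsim|B_0|^{\alpha/n}/\|\mathbf{1}_{B_0}\|_X$, and the boundedness hypothesis then rearranges to $|B_0|^{\alpha/n}\leq C\|\mathbf{1}_{B_0}\|_X^{(\beta-1)/\beta}$, namely \eqref{3.12x}.

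I expect the main obstacle to be the final denominator comparison in the sufficiency: matching the $(X^\beta,\beta d)$-denominator against the target $(X,d)$-denominator on the same collection $\{(B_j,\lambda_j)\}$. The interplay between the $\beta$-convexification and the $d$-aggregation of the ball Campanato-type norm is delicate because the effective exponent on $\lambda_j$ shifts under the convexification identity; the hypotheses $d=r_0\in(0,\min\{1/\beta,p_-\})$ and the explicit lower bound on $s$ are precisely the conditions under which the vector-valued Fefferman--Stein inequality from Assumption \ref{assump1} (together with Remark \ref{main-remark}) absorbs all the ball dilations, so that the geometric decay in $k$ closes the estimate.
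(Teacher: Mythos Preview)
Your sufficiency argument is essentially the paper's proof of Theorem~\ref{main-Theorem2}: the decomposition from Proposition~\ref{I-w-h} absorbs the polynomial pieces, the local term $F^{(1)}_{B_j}$ is handled by the elementary kernel estimate, the far-field term $F^{(2)}_{B_j}$ by Taylor expansion and the annular decomposition (the paper packages the telescoping step as Lemma~\ref{I-JN}), and the denominator comparison you flag as the ``main obstacle'' is exactly the inequality \eqref{I-B-01}, proved by writing $\|\cdot\|_{X^\beta}^{-\beta}=\|\cdot\|_{X^\beta}^{-1}\cdot\|\cdot\|_{X^\beta}^{1-\beta}$ and bounding the second factor by $\inf_i\lambda_i^{(1-\beta)/\beta}$.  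So the sufficiency direction is fine.

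The necessity argument, however, has a genuine gap.  You propose to test on a ``normalized bump or characteristic-like function supported near $B_0$'', but such functions are generally \emph{not} in $\mathcal{L}_{X^{\beta},1,s,\beta d}(\rn)$.  Under the hypotheses of the theorem (in particular $d=r_0<1/\beta$), the space $\mathcal{L}_{X^{\beta},1,s,\beta d}(\rn)$ is the dual of the Hardy space $H_{X^\beta}(\rn)$ (Lemma~\ref{2t1}); when, say, $X=L^p(\rn)$ with $p\beta\le1$, it coincides with a Campanato space $\mathcal{C}_{1/(p\beta)-1,1,s}(\rn)$ of strictly positive index, whose non-polynomial elements are H\"older-type functions that \emph{cannot} be compactly supported.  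Adding ``polynomial modifications'' does not help, since the Campanato norm is computed modulo $\mathcal{P}_s(\rn)$.  Thus your upper bound $\|f_0\|_{\mathcal{L}_{X^\beta,1,s,\beta d}(\rn)}\lesssim\|\mathbf{1}_{B_0}\|_{X^\beta}^{-1}$ fails at the outset, and the single-ball test cannot be carried out.

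The paper proves necessity (Theorem~\ref{thm-main-II}) by an entirely different, duality-based route: it shows that boundedness of $\widetilde I_\alpha$ on the Campanato side forces $\|I_\alpha(a)\|_{H_{X^\beta}(\rn)}\lesssim\|a\|_{H_X(\rn)}$ for every $(X,\infty,s)$-atom $a$, via the adjoint identity $\langle L_g,I_\alpha(a)\rangle=\langle L_{\widetilde I_\alpha(g)},a\rangle$ (Lemma~\ref{dual01}) and the embedding $H_X(\rn)\hookrightarrow(\mathcal{L}_{X,1,s,d}(\rn))^*$ (Lemma~\ref{lem-adj-01}); one then appeals to the already-established necessity of \eqref{3.12x} for the Hardy-space boundedness of $I_\alpha$ (Lemma~\ref{thm-Ia-02}).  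The adjoint identity in turn rests on the atomic decomposition of molecules (Proposition~\ref{HK-mole}) and the key Lemma~\ref{thm-dual-T} showing $\langle L_g,M\rangle=\int_{\rn}gM$ for molecules $M$.  This machinery is what makes the necessity go through; a direct test-function argument in the Campanato space is not available here.
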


\begin{remark}\label{sgjs}
\begin{enumerate}
\item[\rm (i)]
Let both $X$ and $p_-$ satisfy Assumption \ref{assump1}. Assume that
$r_0\in(0,\min\{1,p_-\})$ and $p_0\in(r_0,\infty)$
satisfy Assumption \ref{assump2}.
Let $s\in[\lfloor n(\frac{1}{\min\{1,p_-\}}-1)\rfloor,\infty)\cap\zz_+$,
$d\in(0,r_0]$, $q\in(\max\{1,p_0\},\infty]$,
and $X$ have an absolutely continuous quasi-norm.
In \cite[Corollary 3.15]{zhyy2022}, Zhang et al. proved that
$\mathcal{L}_{X,1,s_0,d_0}(\rn)=\mathcal{L}_{X,q',s,d}(\rn)$
with equivalent quasi-norms,
where $s_0:=\max\{0,\lfloor n(\frac{1}{{\min\{1,p_-\}}}-1)\rfloor\}$
and $d_0:=r_0$.
\item[\rm (ii)]
Let all the symbols be the same as in Theorem \ref{thm-main-I}.
Using (i) of the present remark, we conclude that
$\mathcal{L}_{X,1,s,d}(\rn)=\mathcal{L}_{X,q',s, d}(\rn)$ and
$\mathcal{L}_{X^{\beta},1,s,{\beta d}}(\rn)=\mathcal{L}_{X^{\beta},q',s,{\beta d}}(\rn)$
for any $q\in(\max\{1,\beta p_0\},\infty]$, and hence
Theorem \ref{thm-main-I} still holds true
with both $\mathcal{L}_{X,1,s,d}(\rn)$ and $\mathcal{L}_{X^{\beta},1,s,{\beta d}}(\rn)$ replaced,
respectively, by $\mathcal{L}_{X,q',s,d}(\rn)$
and $\mathcal{L}_{X^{\beta},q',s,{\beta d}}(\rn)$.

\item[\rm (iii)]
Let all the symbols be the same as in Theorem \ref{thm-main-I},
and $I_{\alpha}$ the same as in \eqref{cla-I}.
In \cite[Theorem 3.7]{cjy-01}, we proved that $I_{\alpha}$ is bounded from
$H_{X}(\rn)$ to $H_{X^{\beta}}(\rn)$ if and only if \eqref{3.12x} holds true.
In addition, using \cite[Theorem 3.14]{zhyy2022}, we find both
$(H_{X}(\rn))^*=\mathcal{L}_{X,1,s,d}(\rn)$ and
$(H_{X^{\beta}}(\rn))^*=\mathcal{L}_{X^{\beta},1,s,\beta d}(\rn)$.
Thus, by duality, the formal desired corresponding
result of \cite[Theorem 3.7]{cjy-01}
on the boundedness of fractional integrals on ball
Campanato-type function spaces should be
that $\wz I_{\alpha}$ is bounded from $\mathcal{L}_{X^{\beta},1,s,\beta d}(\rn)$
to $\mathcal{L}_{X,1,s,d}(\rn)$ [rather than $\wz I_{\alpha}$ is bounded
from $\mathcal{L}_{X,1,s, d}(\rn)$
to $\mathcal{L}_{X^{1/\beta},1,s,d/\beta}(\rn)$]
if and only if \eqref{3.12x} holds true, which is just
Theorem \ref{thm-main-I}. This explains the rationality of
the symbols used in Theorem \ref{thm-main-I}.
\end{enumerate}
\end{remark}

We prove the sufficiency and the necessity of Theorem \ref{thm-main-I}, respectively.
We first consider the sufficiency;
indeed, we have the following more general conclusion.

\begin{theorem}\label{main-Theorem2}
Let both $X$ and $p_-$ satisfy Assumption \ref{assump1},
$\beta\in(1,\fz)$, $s\in\zz_+$, $d\in(0,\fz)$,
$\alpha\in (0,1)$, and $\widetilde{I}_{\alpha}$ be the same as
in Remark \ref{rem-I-B}(i).
Let $q_1=1$ and $q_2\in[1,\frac{n}{n-\alpha})$,
or $q_1\in(1,\frac{n}{\alpha})$
and $q_2\in[1,\frac{nq_1}{n-\alpha q_1}]$,
or $q_1\in[\frac{n}{\alpha},\fz)$
and $q_2\in [1,\fz)$.
Further assume that there exists a positive constant $\wz C$
such that, for any ball $B\in\mathbb{B}(\rn)$,
\begin{align}\label{assume000}
|B|^{\frac{\alpha}{n}}\leq \wz C
\|\mathbf{1}_B\|_X^{\frac{\beta-1}{\beta}}.
\end{align}
\begin{enumerate}
\item[\rm(i)]	
If $s\in(\frac{n}{\beta p_-}-n-1+\alpha,\fz)\cap\zz_+$, then
$\widetilde{I}_{\alpha}$
is well defined on $\mathcal{L}_{X^{\beta},q_1,s}(\rn)$ and
bounded from $\mathcal{L}_{X^{\beta},q_1,s}(\rn)$
to $\mathcal{L}_{X,q_2,s}(\rn)$, namely,
there exists a positive constant $C$ such that,
for any $f\in \mathcal{L}_{X^{\beta},q_1,s}(\rn)$,
\begin{align*}
\lf\|\widetilde{I}_{\alpha}(f)\r\|_{\mathcal{L}_{X,q_2,s}(\rn)}
\leq C\|f\|_{\mathcal{L}_{X^{\beta},q_1,s}(\rn)};
\end{align*}

\item[\rm(ii)]
if $s\in(\frac{n}{\beta\min\{d,p_-\}}-n-1+\alpha,\fz)\cap\zz_+$,
then $\widetilde{I}_{\alpha}$
is well defined on $\mathcal{L}_{X^{\beta},q_1,s,\beta d}(\rn)$ and
bounded from
$\mathcal{L}_{X^{\beta},q_1,s,{\beta d}}(\rn)$
to $\mathcal{L}_{X,q_2,s,d}(\rn)$, namely,
there exists a positive constant $C$ such that, for any
$f\in \mathcal{L}_{X^{\beta},q_1,s,{\beta d}}(\rn)$,
\begin{align}\label{I-01}
\lf\|\widetilde{I}_{\alpha}(f)
\r\|_{\mathcal{L}_{X,q_2,s,d}(\rn)}
\leq C \|f\|_{\mathcal{L}_{X^{\beta},q_1,s,{\beta d}}(\rn)}.
\end{align}
\end{enumerate}
\end{theorem}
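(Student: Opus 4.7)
The plan is to reduce everything to estimating, for each fixed ball $B\in\mathbb{B}(\rn)$ appearing in the Campanato norm, the oscillation
\[
\frac{|B|}{\|\mathbf{1}_B\|_X}\lf\{\fint_B\lf|\widetilde{I}_\alpha(f)(x)-P_B^{(s)}(\widetilde{I}_\alpha(f))(x)\r|^{q_2}\,dx\r\}^{1/q_2},
\]
and, for part (ii), to sum such local estimates against the weights $\lambda_j/\|\mathbf{1}_{B_j}\|_{X^\beta}$ inside the $X$-quasi-norm. First I would use the decomposition
\[
\widetilde{I}_{\alpha,B_0}(f)=F_B^{(1)}+F_B^{(2)}+P_{B,B_0}^{(1)}+P_{B,B_0}^{(2)}
\]
from Proposition \ref{I-w-h}, observing that $P_{B,B_0}^{(1)},P_{B,B_0}^{(2)}\in\mathcal{P}_s(B)$ are annihilated after subtracting the minimizing polynomial on $B$; this leaves only $F_B^{(1)}$ and $F_B^{(2)}$ to control.

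For $F_B^{(1)}$, which is the classical $I_\alpha$ applied to $[f-P_{2B}^{(s)}(f)]\mathbf{1}_{2B}$, I would invoke the Hardy--Littlewood--Sobolev theorem (Lemma \ref{fractional}) when $q_1\in(1,n/\alpha)$, its $L^1\to L^{n/(n-\alpha),\infty}$ endpoint together with H\"older's inequality when $q_1=1$, and an elementary $L^\infty$-bound by kernel integrability when $q_1\ge n/\alpha$; in every case the output is dominated by
\[
|B|^{\alpha/n}\lf[\fint_{2B}\lf|f-P_{2B}^{(s)}(f)\r|^{q_1}\,dx\r]^{1/q_1}\ls |B|^{\alpha/n}\frac{\|\mathbf{1}_{2B}\|_{X^\beta}}{|2B|}\|f\|_{\mathcal{L}_{X^\beta,q_1,s}(\rn)}.
\]
For $F_B^{(2)}$, the pointwise estimate \eqref{I-E-1} reduces matters to a dyadic sum over annuli $2^{k+1}B\setminus 2^kB$; inside each annulus I would swap $P_{2B}^{(s)}(f)$ for $P_{2^{k+1}B}^{(s)}(f)$ modulo a telescoping polynomial whose $L^\infty(2^{k+1}B)$-norm is controlled via Remark \ref{rem-ball-B2}, then apply the Campanato norm of $\mathcal{L}_{X^\beta,q_1,s}(\rn)$ on $2^{k+1}B$ and use \eqref{key-c} in Remark \ref{main-remark} to compare $\|\mathbf{1}_{2^{k+1}B}\|_{X^\beta}$ with $\|\mathbf{1}_B\|_{X^\beta}$. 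The resulting geometric series converges precisely under the lower bound on $s$ stated in the theorem, and the same dyadic estimate incidentally verifies the integrability condition \eqref{suita} needed for $\widetilde{I}_\alpha(f)$ to be well defined.

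Assembling the two pieces and dividing by $\|\mathbf{1}_B\|_X/|B|$, I would invoke the hypothesis \eqref{assume000} to trade $|B|^{\alpha/n}$ for $\|\mathbf{1}_B\|_X^{(\beta-1)/\beta}$; combined with the identity $\|\mathbf{1}_B\|_{X^\beta}=\|\mathbf{1}_B\|_X^{1/\beta}$, this produces precisely the normalization required by the target space, and taking the supremum over $B$ yields part (i). For part (ii), the same local estimate is carried out uniformly over the collection $\{B_j\}_{j=1}^m$; the genuinely new step is to move the dilation $2^{k+1}B_j$ back inside the $X$-quasi-norm of $\{\sum_j(\lambda_j/\|\mathbf{1}_{B_j}\|_X)^d\mathbf{1}_{B_j}\}^{1/d}$, which is done through the pointwise domination $\mathbf{1}_{2^{k+1}B_j}\le C\,2^{kdn/r}[\mathcal{M}(\mathbf{1}_{B_j})]^{d/r}$ with $r\in(0,\min\{d,p_-\})$ followed by the Fefferman--Stein vector-valued maximal inequality of Assumption \ref{assump1}, at the cost of an extra factor $2^{kn/r}$ per annulus that the geometric series from $F_B^{(2)}$ must absorb.

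The main difficulty will be this bookkeeping in part (ii): the order in which the dyadic sum over $k$, the weighted sum over $j$, and the $X$-quasi-norm are interchanged matters, and the Fefferman--Stein inequality has to be applied with exponents that respect both the convexification index $\beta$ and the summability index $d$. The balance between the kernel decay $2^{k(\alpha-s-1)}$ and the growth $2^{kn/r}$ produced by Assumption \ref{assump1} is exactly what forces the lower bound $s>\frac{n}{\beta\min\{d,p_-\}}-n-1+\alpha$ in (ii), slightly stricter than $s>\frac{n}{\beta p_-}-n-1+\alpha$ in (i), where the parameter $d$ does not appear.
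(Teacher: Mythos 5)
Your proposal is correct and follows essentially the same route as the paper: the decomposition of $\widetilde{I}_{\alpha,B_0}(f)$ into $F_B^{(1)}+F_B^{(2)}$ plus two polynomials killed by the minimizing polynomial, the Hardy--Littlewood--Sobolev bound on $F_B^{(1)}$ leading to $|B|^{\alpha/n}$ times the local oscillation of $f$ on $2B$, the dyadic annulus estimate for $F_B^{(2)}$ via Lemma \ref{I-JN}, the use of \eqref{assume000} together with $\|\mathbf{1}_B\|_{X^\beta}=\|\mathbf{1}_B\|_X^{1/\beta}$ to renormalize, and the reabsorption of the dilated balls $2^{k+1}B_j$ through the maximal-function domination and Assumption \ref{assump1} are all exactly the paper's steps. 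The only cosmetic discrepancy is that the growth factor per annulus is $2^{kn/(r\beta)}$ (after the convexification) rather than $2^{kn/r}$, but you flag this yourself and it does not affect the argument.
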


To show this theorem, we
first establish two technical lemmas. Similarly to the proof of
\cite[Lemma 2.21]{jtyyz3}, we have the following conclusion; we omit the details here.

\begin{lemma}\label{I-JN}
Let $q\in[1,\fz)$, $s\in\zz_+$, and $\lambda\in(s,\fz)$.
Then there exists a positive constant $C$ such that,
for any $f\in L^1_{\loc}(\rn)$ and
any ball $B(x,r)$ with $x\in\rn$ and $r\in(0,\fz)$,
\begin{align}\label{JN-I-E}
&\int_{\rn\setminus B(x,r)}\frac{|f(y)
-P_{B(x,r)}^{(s)}(f)(y)|}{|x-y|^{n+\lz}}\,dy\\
&\quad\leq C\sum_{k\in\nn}\lf(2^kr\r)^{-\lambda}
\lf[\fint_{2^{k}B(x,r)}\lf|f(y)
-P_{2^{k}B(x,r)}^{(s)}(f)(y)\r|^q\,dy\r]^{\frac{1}{q}}.\noz
\end{align}
\end{lemma}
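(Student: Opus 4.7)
The plan is to prove Lemma \ref{I-JN} by a dyadic decomposition of the complement $\rn\setminus B(x,r)$ combined with a telescoping comparison of the minimizing polynomials $P_{B_k}^{(s)}(f)$ on successively larger balls. Let $B:=B(x,r)$ and, for any $k\in\nn$, set $B_k:=2^kB$ and $A_k:=B_k\setminus B_{k-1}$ (with $B_0:=B$), so that on $A_k$ we have $|x-y|\sim 2^kr$ and $\rn\setminus B=\bigcup_{k\in\nn}A_k$.

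First I would write
\begin{align*}
\int_{\rn\setminus B}\frac{|f(y)-P_{B}^{(s)}(f)(y)|}{|x-y|^{n+\lambda}}\,dy
\lesssim\sum_{k\in\nn}(2^kr)^{-\lambda}\fint_{B_k}\lf|f(y)-P_{B}^{(s)}(f)(y)\r|\,dy
\end{align*}
and then apply the triangle inequality
$|f-P_{B}^{(s)}(f)|\le|f-P_{B_k}^{(s)}(f)|+|P_{B_k}^{(s)}(f)-P_{B}^{(s)}(f)|$.
Hölder's inequality handles the first summand, producing exactly the $L^q$ average appearing in \eqref{JN-I-E}. The main work is to estimate, for each $k$, the $L^\infty(B_k)$ norm of the polynomial $P_{B_k}^{(s)}(f)-P_{B}^{(s)}(f)$, since it is a polynomial of degree $\le s$ and hence pointwise constant (up to $|y|^s$) compared to its values on $B$.

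For this, I would telescope $P_{B_k}^{(s)}(f)-P_{B}^{(s)}(f)=\sum_{j=0}^{k-1}[P_{B_{j+1}}^{(s)}(f)-P_{B_j}^{(s)}(f)]$. Since $P_{B_j}^{(s)}$ is the identity on $\mathcal{P}_s(\rn)$, each summand equals $P_{B_j}^{(s)}(f-P_{B_{j+1}}^{(s)}(f))$; by Remark \ref{rem-ball-B2} its $L^\infty(B_j)$ norm is bounded by $\fint_{B_j}|f-P_{B_{j+1}}^{(s)}(f)|\,dy\lesssim\fint_{B_{j+1}}|f-P_{B_{j+1}}^{(s)}(f)|\,dy$. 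Then I promote this $L^\infty(B_j)$ bound to an $L^\infty(B_k)$ bound via the standard polynomial doubling inequality
$\|P\|_{L^\infty(B_k)}\lesssim 2^{(k-j)s}\|P\|_{L^\infty(B_j)}$ for any $P\in\mathcal{P}_s(\rn)$,
and finally invoke Hölder's inequality to introduce the $L^q$ average on $B_{j+1}$.

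Putting everything together gives
\begin{align*}
\int_{\rn\setminus B}\frac{|f-P_{B}^{(s)}(f)|}{|x-y|^{n+\lambda}}\,dy
\lesssim\sum_{k\in\nn}(2^kr)^{-\lambda}\mathcal{A}_k+\sum_{k\in\nn}(2^kr)^{-\lambda}\sum_{j=0}^{k-1}2^{(k-j)s}\mathcal{A}_{j+1},
\end{align*}
where $\mathcal{A}_k:=[\fint_{B_k}|f-P_{B_k}^{(s)}(f)|^q\,dy]^{1/q}$. The last step is to swap the order of summation in the double sum; the inner sum becomes $r^{-\lambda}2^{-js}\sum_{k>j}2^{k(s-\lambda)}$, which converges precisely because $\lambda>s$ and yields a multiple of $(2^jr)^{-\lambda}$. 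This is where the hypothesis $\lambda>s$ is crucial and is the only delicate point of the proof; everything else is routine. Reindexing then produces exactly the right-hand side of \eqref{JN-I-E}, finishing the argument.
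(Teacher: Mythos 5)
Your proof is correct and is essentially the standard argument that the paper defers to (it cites the proof of \cite[Lemma 2.21]{jtyyz3} and omits details): dyadic annuli where $|x-y|\sim 2^kr$, the split $|f-P_{B}^{(s)}(f)|\le|f-P_{B_k}^{(s)}(f)|+|P_{B_k}^{(s)}(f)-P_{B}^{(s)}(f)|$, the telescoping bound on the polynomial difference via Remark \ref{rem-ball-B2} and the degree-$s$ doubling inequality, and the interchange of summation using $\lambda>s$. Note that your double-sum step is exactly the content of Lemma \ref{sum-g} with $\theta:=2^{-\lambda}\in(0,2^{-s})$, so the last part of your argument could be replaced by a citation of that lemma.
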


The following lemma shows that
functions of both $\mathcal{L}_{X,q,s}(\rn)$
and $\mathcal{L}_{X,q,s,d}(\rn)$ satisfy \eqref{suita}.

\begin{lemma}\label{lem-suit}
Let $s\in\zz_+$, $q\in[1,\fz)$, $d\in(0,\fz)$, and
$\lambda\in(s,\fz)$.
Let both $X$ and $p_-\in(\frac{n}{n+\lambda},\fz)$ satisfy Assumption
\ref{assump1}.
Then, for any $f\in \mathcal{L}_{X,q,s}(\rn)$ [resp., $f\in
\mathcal{L}_{X,q,s,d}(\rn)$],
$f$ satisfies \eqref{suita}.
\end{lemma}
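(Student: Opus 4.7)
The plan is to reduce the integral in \eqref{suita} to a dyadic sum via Lemma \ref{I-JN}, bound each term using the definition of $\mathcal{L}_{X,q,s}(\rn)$, and then control the growth of $\|\mathbf{1}_{2^k B}\|_X$ in $k$ via the maximal inequality in Remark \ref{main-remark}. For the space $\mathcal{L}_{X,q,s,d}(\rn)$, the conclusion will follow directly from the embedding \eqref{sub-01}.

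First, fix $f\in\mathcal{L}_{X,q,s}(\rn)$ and a ball $B:=B(x,r)$. Applying Lemma \ref{I-JN} with the given $\lambda>s$ reduces matters to bounding
\begin{align*}
\sum_{k\in\nn}(2^kr)^{-\lambda}\lf[\fint_{2^kB}\lf|f(y)-P_{2^kB}^{(s)}(f)(y)\r|^q\,dy\r]^{\frac{1}{q}}.
\end{align*}
By the very definition of $\|f\|_{\mathcal{L}_{X,q,s}(\rn)}$, each inner average is at most $\|f\|_{\mathcal{L}_{X,q,s}(\rn)}\,\|\mathbf{1}_{2^kB}\|_X/|2^kB|$.

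Next I would control $\|\mathbf{1}_{2^kB}\|_X$. Since $p_-\in(\frac{n}{n+\lambda},\fz)$, I may pick $r_0\in(\frac{n}{n+\lambda},p_-)$ so that, by Remark \ref{main-remark} (specifically \eqref{key-c} applied with $\beta:=2^k$),
\begin{align*}
\lf\|\mathbf{1}_{2^kB}\r\|_X\leq (2^{k+1})^{\frac{n}{r_0}}\|\mathbf{1}_{B}\|_X.
\end{align*}
Combining these estimates yields
\begin{align*}
\int_{\rn\setminus B}\frac{|f(y)-P_{B}^{(s)}(f)(y)|}{|x-y|^{n+\lambda}}\,dy
\lesssim \|f\|_{\mathcal{L}_{X,q,s}(\rn)}\frac{\|\mathbf{1}_{B}\|_X}{|B|}\sum_{k\in\nn}2^{k(\frac{n}{r_0}-n-\lambda)}r^{-\lambda}.
\end{align*}
The choice $r_0>\frac{n}{n+\lambda}$ makes the exponent $\frac{n}{r_0}-n-\lambda$ strictly negative, so the geometric series converges and the whole expression is finite, proving \eqref{suita} in this case.

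For the second case, if $f\in\mathcal{L}_{X,q,s,d}(\rn)$, the inclusion \eqref{sub-01} gives $f\in\mathcal{L}_{X,q,s}(\rn)$, and the previous argument applies verbatim. The main (and only nontrivial) obstacle here is ensuring the dyadic growth $\|\mathbf{1}_{2^kB}\|_X\lesssim 2^{kn/r_0}\|\mathbf{1}_{B}\|_X$ is compatible with the decay factor $(2^kr)^{-\lambda-n}$; this is exactly why the hypothesis $p_->\frac{n}{n+\lambda}$ is imposed, because it guarantees the existence of a valid $r_0$ strictly larger than $\frac{n}{n+\lambda}$.
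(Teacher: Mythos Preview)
Your proof is correct and follows essentially the same route as the paper's: apply Lemma \ref{I-JN} to pass to a dyadic sum, bound each term via the definition of $\|\cdot\|_{\mathcal{L}_{X,q,s}(\rn)}$, control $\|\mathbf{1}_{2^kB}\|_X$ using \eqref{key-c} with a parameter in $(\frac{n}{n+\lambda},p_-)$, and sum the resulting geometric series; the case $f\in\mathcal{L}_{X,q,s,d}(\rn)$ is then handled by the inclusion \eqref{sub-01}.
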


\begin{proof}
Let all the symbols be the same as in the present lemma.
We only need to show the case $f\in \mathcal{L}_{X,q,s}(\rn)$ due to
\eqref{sub-01}.
In this case, by \eqref{JN-I-E}, Remark \ref{main-remark},
$p_-\in(\frac{n}{n+\lambda},\fz)$, and the definition of
$\|\cdot\|_{\mathcal{L}_{X,q,s}(\rn)}$,
we find that,
for any given $r\in(\frac{n}{n+\lambda},p_-)$ and
for any ball $B:=B(x_0,r_0)\in\mathbb{B}(\rn)$
with $x_0\in\rn$ and $r_0\in(0,\fz)$,
\begin{align*}
&\int_{\rn\setminus B}\frac{|f(y)
-P_{B}^{(s)}(f)(y)|}{|x_0-y|^{n+\lz}}\,dy\\
&\quad\ls\sum_{k\in\nn}\lf(2^kr_0\r)^{-\lambda}
\lf[\fint_{2^{k}B}\lf|f(y)
-P_{2^{k}B}^{(s)}(f)(y)\r|^q\,dy\r]^{\frac{1}{q}}\\
&\quad\sim\sum_{k\in\nn}\lf(2^kr_0\r)^{-\lambda}
\frac{\|\mathbf{1}_{2^{k+1}B}\|_{X}}{|2^{k+1}B|}
\frac{|2^{k+1}B|}{\|\mathbf{1}_{2^{k+1}B}\|_{X}}
\lf[\fint_{2^{k}B}\lf|f(y)
-P_{2^{k}B}^{(s)}(f)(y)\r|^q\,dy\r]^{\frac{1}{q}}\\
&\quad\ls\sum_{k\in\nn}2^{k(-\lambda-n+\frac{n}{r})}
\frac{r_0^{-\lambda}\|\mathbf{1}_{B}\|_{X}}{|B|}
\|f\|_{\mathcal{L}_{X,q,s}(\rn)}
\ls\|f\|_{\mathcal{L}_{X,q,s}(\rn)}<\fz.
\end{align*}
Thus, $f$ satisfies \eqref{suita}.
This finishes the proof of the case $f\in \mathcal{L}_{X,q,s}(\rn)$,
and hence of Lemma \ref{lem-suit}.
\end{proof}

Now, we prove Theorem \ref{main-Theorem2}.

\begin{proof}[Proof of Theorem \ref{main-Theorem2}]
We only prove (ii) because the proof of (i) is similar.	
Let all the symbols be the same as in the present theorem.
Notice that $\beta p_-\in(\frac{n}{n+s+1-\alpha},\fz)$.
From this, Proposition \ref{I-w-h}, and Lemma \ref{lem-suit},
we deduce that $\widetilde{I}_{\alpha}$
is well defined on $\mathcal{L}_{X^{\beta},q_1,s,{\beta d}}(\rn)$.

Now, we show \eqref{I-01}.
To this end, let $\{B_j\}_{j=1}^m:=\{B(z_j,r_j)\}_{j=1}^m\subset
\mathbb{B}(\rn)$
with both $\{z_j\}_{j=1}^m\subset\rn$ and $\{r_j\}_{j=1}^m\subset(0,\fz)$, and
$\{\lambda_j\}_{j=1}^m\subset[0,\infty)$
with $\sum_{j=1}^m\lambda_j\neq0$.
Using Remark \ref{rem-ball-B2}, we easily find that,
for any $g\in L^1_{\mathrm{loc}}(\rn)$ and any ball
$B\in\mathbb{B}(\rn)$,
\begin{align}\label{int-P}
\lf[\fint_B\lf|g(x)-P_B^{(s)}(g)(x)
\r|^q\,dx\r]^{\frac{1}{q}}
\sim\inf_{P\in \mathcal{P}_s(B)}
\lf[\fint_B|g(x)-P(x)|^q\,dx\r]^{\frac{1}{q}};
\end{align}
see, for instance, \cite[(2.12)]{jtyyz1}.
From this, \eqref{I-1}, \eqref{2.27x}, \eqref{2.27xx},
and the fact that $P_{B}^{(s)}(P)=P$ for any
$P\in \mathcal{P}_s(\rn)$ and $B\in \mathbb{B}(\rn)$, we deduce that,
for any
$f\in \mathcal{L}_{X^{\beta},q_1,s,d}(\rn)$,
\begin{align}\label{Iw-B-01}
&\lf\|\lf\{\sum_{i=1}^m
\lf[\frac{{\lambda}_i}{\|{\mathbf{1}}_{B_i}\|_X}\r]^d
{\mathbf{1}}_{B_i}\r\}^{\frac1d}\r\|_{X}^{-1}
\sum_{j=1}^m\frac{{\lambda}_j|B_j|}{\|{\mathbf{1}}_{B_j}\|_{X}}
\lf[\fint_{B_j}\lf|\wz I_{\alpha}(f)(x)
-P^{(s)}_{B_j}\lf(\wz I_{\alpha}(f)\r)(x)\r|^q \,dx\r]^\frac1q\\
&\quad\lesssim\lf\|\lf\{\sum_{i=1}^m
\lf[\frac{{\lambda}_i}{\|{\mathbf{1}}_{B_i}\|_X}\r]^d
{\mathbf{1}}_{B_i}\r\}^{\frac1d}\r\|_{X}^{-1}
\sum_{j=1}^m\frac{\lambda_{j}|B_{j}|}{\|\mathbf{1}_{B_{j}}\|_{X}}\noz\\
&\quad\quad\times
\lf[\fint_{B_j}
\lf|\widetilde{I}_\alpha (f)(x)-P_{B_j,B_0}^{(1)}(x)
-P_{B_j,B_0}^{(2)}(x)\r|^{q_2}\,dx\r]^{\frac{1}{q_2}}\noz\\
&\quad\sim\lf\|\lf\{\sum_{i=1}^m
\lf[\frac{{\lambda}_i}{\|{\mathbf{1}}_{B_i}\|_X}\r]^d
{\mathbf{1}}_{B_i}\r\}^{\frac1d}\r\|_{X}^{-1}
\sum_{j=1}^m\frac{\lambda_{j}|B_{j}|}{\|\mathbf{1}_{B_{j}}\|_{X}}
\lf[\fint_{B_j}
\lf|F_{B_j}^{(1)}(x)+F_{B_j}^{(2)}(x)\r|^{q_2}\,dx
\r]^{\frac{1}{q_2}}\noz\\
&\quad\ls\lf\|\lf\{\sum_{i=1}^m
\lf[\frac{{\lambda}_i}{\|{\mathbf{1}}_{B_i}\|_X}\r]^d
{\mathbf{1}}_{B_i}\r\}^{\frac1d}\r\|_{X}^{-1}
\lf\{\sum_{j=1}^m\frac{\lambda_j|B_j|}{\|\mathbf{1}_{B_j}\|_{X}}
\lf[\fint_{B_j}
\lf|F_{B_j}^{(1)}(x)\r|^{q_2}\,dx\r]^{\frac{1}{q_2}}\r.\noz\\
&\qquad
\lf.+\sum_{j=1}^m\frac{\lambda_j|B_j|}{\|\mathbf{1}_{B_j}\|_{X}}
\lf[\fint_{B_j}
\lf|F_{B_j}^{(2)}(x)\r|^{q_2}\,dx\r]^{\frac{1}{q_2}}\r\}\noz\\
&\quad=:\mathrm{I}_1+\mathrm{I}_2,\noz
\end{align}
where $F_{B_j}^{(1)}$, $F_{B_j}^{(2)}$, $P_{B_j,B_0}^{(1)}$,
and $P_{B_j,B_0}^{(2)}$ are, respectively, as in \eqref{2.22x}
\eqref{2.22y}, \eqref{2.22z}, and \eqref{2.22w}
with $B$ therein replaced by $B_j$, and
the implicit positive constants are independent of $f$, $m$,
$\{\lambda_j\}_{j=1}^m$, and $\{B_j\}_{j=1}^m$.

Next, we prove that
\begin{align}\label{I-ball-01'}
\mathrm{I}_1\ls\|f\|_{\mathcal{L}_{X^{\beta},q_1,s,{\beta d}}(\rn)}
\end{align}
with the implicit positive constants independent of $f$, $m$,
$\{\lambda_j\}_{j=1}^m$, and $\{B_j\}_{j=1}^m$.
Indeed, by both the definition of $\|\cdot\|_{X^{\beta}}$
and $\beta\in(1,\fz)$, we find that
\begin{align}\label{I-B-01}
&\lf\|\lf\{\sum_{i=1}^m
\lf[\frac{\lambda_{i}}{\|\mathbf{1}_{B_i}\|_{X}}\r]^{d}
\mathbf{1}_{B_i}\r\}^{\frac{1}{d}}\r\|_{X}^{-1}\\
&\quad=\lf\|\lf\{\sum_{i=1}^m\lf[\frac{\lambda_{i}^{\frac{1}{\beta}}}
{\|\mathbf{1}_{B_i}\|_{X^{\beta}}}\r]^{\beta d}
\mathbf{1}_{B_i}\r\}^{\frac{1}{\beta
d}}\r\|_{X^{\beta}}^{-\beta}\noz\\
&\quad=\lf\|\lf\{\sum_{i=1}^m\lf[\frac{\lambda_{i}^{\frac{1}{\beta}}}
{\|\mathbf{1}_{B_i}\|_{X^{\beta}}}\r]^{\beta d}
\mathbf{1}_{B_i}\r\}^{\frac{1}{\beta d}}\r\|_{X^{\beta}}^{-1}
\lf\|\lf\{\sum_{i=1}^m\lf[\frac{\lambda_{i}^{\frac{1}{\beta}}}
{\|\mathbf{1}_{B_i}\|_{X^{\beta}}}\r]^{\beta d}
\mathbf{1}_{B_i}\r\}^{\frac{1}{\beta
d}}\r\|_{X^{\beta}}^{1-\beta}\noz\\
&\quad\leq\lf\|\lf\{\sum_{i=1}^m\lf[\frac{\lambda_{i}^{\frac{1}{\beta}}}
{\|\mathbf{1}_{B_i}\|_{X^{\beta}}}\r]^{\beta d}
\mathbf{1}_{B_i}\r\}^{\frac{1}{\beta d}}\r\|_{X^{\beta}}^{-1}
\inf\lf\{\lambda_i^{\frac{1-\beta}{\beta}}:\ i\in\{1,\ldots,m\}\r\}\noz\\
&\quad\leq\lf\|\lf\{\sum_{i=1}^m\lf[\frac{\lambda_{i}}
{\|\mathbf{1}_{B_i}\|_{X^{\beta}}}\r]^{\beta d}
\mathbf{1}_{B_i}\r\}^{\frac{1}{\beta d}}
\r\|_{X^{\beta}}^{-1}.\noz
\end{align}
In addition, using \cite[(2.29)]{jtyyz2}, we obtain, for any
$j\in\{1,\ldots,m\}$,
\begin{align}\label{A1-1}
&\lf|B_j\r|^{-\frac{\alpha}{n}}
\lf[\fint_{B_j}\lf|F_{B_j}^{(1)}(x)
\r|^{q_2}\,dx\r]^{\frac{1}{q_2}}
\lesssim\lf[\fint_{2B_j}\lf|f(x)
-P_{2B_j}^{(s)}(f)(x)\r|^{q_1}\,dx\r]^{\frac{1}{q_1}}.
\end{align}
Moreover, from the definition of $\|\cdot\|_{X^{\beta}}$,
\eqref{assume000}, and $\beta\in(1,\fz)$, we deduce that, for any
$j\in\{1,\ldots,m\}$,
\begin{align}\label{cjy4.18x}
\lf\|\mathbf{1}_{B_j}\r\|_{X^{\beta}}
=\lf\|\mathbf{1}_{B_j}\r\|_{X}^{\frac{1}{\beta}}
=\lf\|\mathbf{1}_{B_j}\r\|_{X}^{\frac{1-\beta}{\beta}}
\lf\|\mathbf{1}_{B_j}\r\|_{X}
\ls |B_j|^{-\frac{\alpha}{n}}\lf\|\mathbf{1}_{B_j}\r\|_{X},
\end{align}
which, together with \eqref{key-c}, further implies that
\begin{align}\label{I-ball-03}
\frac{|B_j|}{\|\mathbf{1}_{B_j}\|_{X}}
\ls\frac{|B_j|^{1-\frac{\alpha}{n}}}
{\|\mathbf{1}_{B_j}\|_{X^{\beta}}}
\sim \frac{|2B_j|^{1-\frac{\alpha}{n}}}
{\|\mathbf{1}_{2B_j}\|_{X^{\beta}}}.
\end{align}
Using \eqref{I-B-01}, \eqref{A1-1}, \eqref{I-ball-03},
Remark \ref{main-remark}, and the definition of
$\|\cdot\|_{\mathcal{L}_{X^{\beta},q_1,s,{\beta d}}(\rn)}$, we
conclude that
\begin{align*}
\mathrm{I}_1
&\ls\lf\|\lf\{\sum_{i=1}^m\lf[\frac{\lambda_{i}}
{\|\mathbf{1}_{B_i}\|_{X^{\beta}}}\r]^{{\beta d}}
\mathbf{1}_{B_i}\r\}^{\frac{1}{\beta d}}\r\|_{X^{\beta}}^{-1}
\sum_{j=1}^m\frac{\lambda_{j}|B_j|}
{\|\mathbf{1}_{B_j}\|_{X}}\lf[\fint_{B_j}
\lf|F_{B_j}^{(1)}(x)\r|^{q_2}\,dx\r]^{\frac{1}{q_2}}\\
&\ls\lf\|\lf\{\sum_{i=1}^m\lf[\frac{\lambda_{i}}
{\|\mathbf{1}_{2B_i}\|_{X^{\beta}}}\r]^{\beta d}
\mathbf{1}_{2B_i}\r\}^{\frac{1}{\beta d}}\r\|_{X^{\beta}}^{-1}
\sum_{j=1}^m\frac{\lambda_j|2B_j|}{\|\mathbf{1}_{2B_j}\|_{X^{\beta}}}
\lf[\fint_{2B_j}\lf|f(x)-P_{2B_j}^{(s)}(f)(x)\r|^{q_1}\,dx\r]
^{\frac{1}{q_1}}\\
&\ls\|f\|_{\mathcal{L}_{X^{\beta},q_1,s,\beta d}(\rn)}
\end{align*}
and hence \eqref{I-B-01} holds true.

Now, we show that
\begin{align*}
\mathrm{I}_2\ls\|f\|_{\mathcal{L}_{X^{\beta},q_1,s,\beta d}(\rn)}
\end{align*}
with the implicit positive constants independent of $f$, $m$,
$\{\lambda_j\}_{j=1}^m$, and $\{B_j\}_{j=1}^m$.
To this end, for any $j\in\{1,\ldots,m\}$ and $k\in\nn$, let
$\lambda_{j,k}:=\frac{\lambda_j\|\mathbf{1}_{2^kB_j}
\|_{X^{\beta}}}{\|\mathbf{1}_{B_j}\|_{X^{\beta}}}$.
Then, by the definition of $\|\cdot\|_{X^{\beta}}$,
$\beta\in(1,\fz)$, Remark \ref{main-remark}, and the definition of
$\lambda_{i,k}$
with both $i\in\{1,\ldots,m\}$ and $k\in\nn$, we find that, for any given
$r\in(0,\min\{d,p_-\})$ and for any $k\in\nn$,
\begin{align*}
&\lf\|\lf\{\sum_{i=1}^m\lf[\frac{\lambda_{i,k}}
{\|\mathbf{1}_{2^kB_i}\|_{X^{\beta}}}\r]^{\beta d}
\mathbf{1}_{2^kB_i}\r\}^{\frac{1}{\beta d}}\r\|_{X^{\beta}}\\
&\quad=\lf\|\lf\{\sum_{i=1}^m\lf[\frac{\lambda_{i,k}^{\beta}}
{\|\mathbf{1}_{2^kB_i}\|_{X}}\r]^{d}
\mathbf{1}_{2^kB_i}\r\}^{\frac{1}{d}}\r\|_{X}^{\frac{1}{\beta}}
\ls2^{\frac{kn}{r\beta}}\lf\|\lf\{\sum_{i=1}^m
\lf[\frac{\lambda_{i,k}^{\beta}}{\|\mathbf{1}_{2^kB_i}\|_{X}}\r]^d
\mathbf{1}_{B_i}\r\}^{\frac{1}{d}}\r\|_{X}^\frac{1}{\beta}\\
&\quad\sim2^{\frac{kn}{r\beta}}
\lf\|\lf\{\sum_{i=1}^m\lf[\frac{\lambda_{i,k}}
{\|\mathbf{1}_{2^kB_i}\|_{X^{\beta}}}\r]^{\beta d}
\mathbf{1}_{B_i}\r\}^{\frac{1}{\beta d}}\r\|_{X^{\beta}}\\
&\quad\sim2^{\frac{kn}{r\beta}}
\lf\|\lf\{\sum_{i=1}^m\lf[\frac{\lambda_{i}}
{\|\mathbf{1}_{B_i}\|_{X^{\beta}}}\r]^{\beta d}
\mathbf{1}_{B_i}\r\}^{\frac{1}{\beta d}}\r\|_{X^{\beta}}.
\end{align*}
Then, from this, \eqref{I-B-01}, \eqref{I-E-1}, \eqref{JN-I-E}, \eqref{cjy4.18x},
$\beta\min\{d,p_-\}\in(\frac{n}{n+s+1-\alpha},\fz)$,
and the definition of
$\|\cdot\|_{\mathcal{L}_{X^{\beta},q_1,s,d\beta}(\rn)}$,
we deduce that, for any given
$r\in(\frac{n}{\beta(n+s+1-\alpha)},\min\{d,p_-\})$,
\begin{align*}
\mathrm{I}_2&\ls\lf\|\lf\{\sum_{j=1}^m\lf[\frac{\lambda_{i}}
{\|\mathbf{1}_{B_i}\|_{X^{\beta}}}\r]^{\beta d}
\mathbf{1}_{B_i}\r\}^{\frac{1}{d\beta}}\r\|_{X^{\beta}}^{-1}
\sum_{j=1}^m\frac{\lambda_{j}|B_j|}
{\|\mathbf{1}_{B_j}\|_{X}}\lf[\fint_{B_j}
\lf|F_{B_j}^{(2)}(x)\r|^{q_2}\,dx\r]^{\frac{1}{q_2}}\\
&\ls\lf\|\lf\{\sum_{j=1}^m\lf[\frac{\lambda_{i}}
{\|\mathbf{1}_{B_i}\|_{X^{\beta}}}\r]^{\beta d}
\mathbf{1}_{B_i}\r\}^{\frac{1}{\beta d}}\r\|_{X^{\beta}}^{-1}
\sum_{j=1}^m\frac{\lambda_{j}|B_j|}
{\|\mathbf{1}_{B_j}\|_{X}}r_j^{s+1}\int_{\rn\setminus 2B_j}
\frac{|f(y)-P_{2B_j}^{(s)}(f)(y)|}{|y-z_j|^{n+s+1-\alpha}}\,dy\\
&\ls\sum_{k=1}^{\fz}\lf\|\lf\{\sum_{j=1}^m\lf[\frac{\lambda_{i}}
{\|\mathbf{1}_{B_i}\|_{X^{\beta}}}\r]^{\beta d}
\mathbf{1}_{B_i}\r\}^{\frac{1}{\beta d}}\r\|_{X^{\beta}}^{-1}\\
&\quad\times\sum_{j=1}^m\lf(2^k r_j\r)^{-s-1+\alpha}r_j^{s+1}
\frac{\lambda_{j}|B_j|}{\|\mathbf{1}_{B_j}\|_{X}}\lf[\fint_{2^{k+1}B_j}
\lf|f(y)-P_{2^{k+1}B_j}^{(s)}(f)(y)\r|^{q_1}
\,dy\r]^{\frac{1}{q_1}}\\
&\ls\sum_{k=1}^{\fz}2^{k(-s-1+\alpha)}\lf\|\lf\{\sum_{i=1}^m
\lf[\frac{\lambda_{i}}
{\|\mathbf{1}_{B_i}\|_{X^{\beta}}}\r]^{\beta d}
\mathbf{1}_{B_i}\r\}^{\frac{1}{\beta d}}\r\|_{X^{\beta}}^{-1}\\
&\quad\times\sum_{j=1}^m\frac{\lambda_{j}|B_j|}
{\|\mathbf{1}_{B_j}\|_{X^{\beta}}}
\lf[\fint_{2^{k+1}B_j}\lf|f(y)-P_{2^{k+1}B_j}^{(s)}(f)(y)
\r|^{q_1}\,dy\r]^{\frac{1}{q_1}}\\
&\ls\sum_{k=1}^{\fz}2^{k(-n-s-1+\alpha+\frac{n}{r\beta})}
\lf\|\lf\{\sum_{i=1}^m\lf[\frac{\lambda_{i,k}}
{\|\mathbf{1}_{2^{k+1}B_i}\|_{X^{\beta}}}\r]^{\beta d}
\mathbf{1}_{2^{k+1}B_i}\r\}^{\frac{1}{\beta
d}}\r\|_{X^{\beta}}^{-1}\\
&\quad\times\sum_{j=1}^m\frac{\lambda_{j,k}|2^{k+1}B_j|}
{\|\mathbf{1}_{2^{k+1}B_j}\|_{X^{\beta}}}\lf[\fint_{2^{k+1}B_j}\lf|f(y)
-P_{2^{k+1}B_j}^{(s)}(f)(y)\r|^{q_1}\,dy\r]^{\frac{1}{q_1}}\\
&\ls \|f\|_{\mathcal{L}_{X^{\beta},q_1,s,\beta d}(\rn)},
\end{align*}
where the implicit positive constants are independent of both $B$ and $f$.
Using this, \eqref{Iw-B-01}, \eqref{I-ball-01'}, and the definition
of $\|\cdot\|_{\mathcal{L}_{X,q_2,s,d}(\rn)}$, we finally conclude that, for
any
$f\in \mathcal{L}_{X^{\beta},q_1,s,\beta d}(\rn)$,
\begin{align*}
\lf\|\widetilde{I}_{\alpha}(f)\r\|_{\mathcal{L}_{X,q_2,s,d}(\rn)}
\ls \|f\|_{\mathcal{L}_{X^{\beta},q_1,s,\beta d}(\rn)}.
\end{align*}
This finishes the proof of \eqref{I-01},
and hence of Theorem \ref{main-Theorem2}.
\end{proof}

Next, we extend the range $\alpha\in(0,1)$ in
Theorem \ref{main-Theorem2} to the range $\alpha\in(0,n)$ as follows.

\begin{theorem}\label{main-corollary}
Let both $X$ and $p_-$ satisfy Assumption \ref{assump1},
$\beta\in(1,\fz)$, $q\in[1,\infty)$, $s\in\zz_+$, $d\in(0,\fz)$,
$\alpha\in (0,n)$, and $\widetilde{I}_{\alpha}$ be the same as
in Remark \ref{rem-I-B}(i).
Further assume that there exists a positive constant $\wz C$
such that, for any ball $B\in\mathbb{B}(\rn)$,
\begin{align}\label{3.22x}
|B|^{\frac{\alpha}{n}}\leq \wz C
\|\mathbf{1}_B\|_X^{\frac{\beta-1}{\beta}}.
\end{align}
\begin{enumerate}
\item[\rm(i)]	
If $s\in(\frac{n}{p_-}-n-1+\frac{\alpha}{n},\fz)\cap\zz_+$, then
$\widetilde{I}_{\alpha}$
is bounded from $\mathcal{L}_{X^{\beta},q,s}(\rn)$
to $\mathcal{L}_{X,q,s}(\rn)$, namely,
there exists a positive constant $C$ such that,
for any $f\in \mathcal{L}_{X^{\beta},q,s}(\rn)$,
\begin{align*}
\lf\|\widetilde{I}_{\alpha}(f)\r\|_{\mathcal{L}_{X,q,s}(\rn)}
\leq C\|f\|_{\mathcal{L}_{X^{\beta},q,s}(\rn)}.
\end{align*}

\item[\rm(ii)]
If $s\in(\frac{n}{ \min\{d,p_-\}}-n-1+\frac{\alpha}{n},\fz)\cap\zz_+$,
then $\widetilde{I}_{\alpha}$ is bounded from
$\mathcal{L}_{X^{\beta},q,s,{\beta d}}(\rn)$
to $\mathcal{L}_{X,q,s,d}(\rn)$, namely,
there exists a positive constant $C$ such that, for any
$f\in \mathcal{L}_{X^{\beta},q,s,{\beta d}}(\rn)$,
\begin{align*}
\lf\|\widetilde{I}_{\alpha}(f)
\r\|_{\mathcal{L}_{X,q,s,d}(\rn)}
\leq C \|f\|_{\mathcal{L}_{X^{\beta},q,s,{\beta d}}(\rn)}.
\end{align*}
\end{enumerate}
\end{theorem}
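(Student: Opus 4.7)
The plan is to reduce Theorem \ref{main-corollary} to Theorem \ref{main-Theorem2}, splitting on whether $\alpha\in(0,1)$ or $\alpha\in[1,n)$. For $\alpha\in(0,1)$, the operator $\wz I_\alpha$ is defined directly by \eqref{D-Iw}, so Theorem \ref{main-Theorem2}(ii) [resp., (i)] applies after choosing admissible exponents $q_1=1$ and $q_2\in[1,n/(n-\alpha))$; the passage from $\|\cdot\|_{\mathcal{L}_{X,q_2,s,d}(\rn)}$ to $\|\cdot\|_{\mathcal{L}_{X,q,s,d}(\rn)}$ for arbitrary $q\in[1,\fz)$ is then supplied by Remark \ref{sgjs}.

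For $\alpha\in[1,n)$ I would use the iterative representation $\wz I_\alpha=\wz I_{\alpha/n}\circ\cdots\circ\wz I_{\alpha/n}$ ($n$ factors) from \eqref{de-wsIn} and apply Theorem \ref{main-Theorem2} with parameter $\alpha/n\in(0,1)$ to each factor along a chain of intermediate spaces. Set $\eta_k:=\frac{n-k}{n\beta}+\frac{k}{n}$ for $k=0,1,\dots,n$ (linear interpolation from $1/\beta$ to $1$), and let $\rho_k:=1/\eta_k$, $Y_k:=X^{\rho_k}$, and $d_k:=\rho_k d$. Then $Y_0=X^{\beta}$, $Y_n=X$, $d_0=\beta d$, $d_n=d$, and for each $k\in\{1,\dots,n\}$ one has $Y_{k-1}=Y_k^{\gamma_k}$ with $\gamma_k:=\rho_{k-1}/\rho_k>1$ and $d_{k-1}=\gamma_k d_k$. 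Applying Theorem \ref{main-Theorem2}(ii) to each step with $q_1=q_2=1$, composing the resulting $n$ bounds, and invoking Remark \ref{sgjs} once more yields the target estimate.

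The crux of the verification lies in the uniform step-wise ball-measure condition: at step $k$, Theorem \ref{main-Theorem2}(ii) requires $|B|^{\alpha/n^2}\leq C\|\mathbf{1}_B\|_{Y_k}^{(\gamma_k-1)/\gamma_k}$. Using $\|\mathbf{1}_B\|_{Y_k}=\|\mathbf{1}_B\|_X^{1/\rho_k}$ and the telescoping identity $(\gamma_k-1)/(\gamma_k\rho_k)=\eta_k-\eta_{k-1}=(\beta-1)/(n\beta)$, this reduces to $|B|^{\alpha/n^2}\leq C\|\mathbf{1}_B\|_X^{(\beta-1)/(n\beta)}$, which is precisely \eqref{3.22x} raised to the $1/n$-th power. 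Similarly, the $s$-condition at step $k$ reduces to $s>n/(\rho_{k-1}\min\{d,p_-\})-n-1+\alpha/n$, whose worst case $\rho_{k-1}=1$ is exactly the hypothesis of Theorem \ref{main-corollary}(ii); Assumption \ref{assump1} transfers to each $Y_k$ via $p_-(Y_k)=\rho_k p_-$; and Lemma \ref{lem-suit} together with the step-wise boundedness guarantees that the output of each factor lies in the appropriate Campanato space and satisfies \eqref{suita}, so the next factor of $\wz I_{\alpha/n}$ may be applied.

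The main obstacle is precisely the choice of interpolation: with the linear profile for $1/\rho_k$ the step-exponent $\eta_k-\eta_{k-1}$ is $k$-independent and equals $(\beta-1)/(n\beta)$, which aligns perfectly with extracting an $n$-th root of \eqref{3.22x}. A geometric profile, by contrast, would produce a $k$-dependent right-hand side that cannot be uniformly dominated by a single ball-measure hypothesis. Part (i) follows by the same iterative argument, using Theorem \ref{main-Theorem2}(i) at each step and dropping the $d_k$-bookkeeping.
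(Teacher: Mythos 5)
Your proposal takes essentially the same route as the paper's proof: for $\alpha\in(0,1)$ it reduces to Theorem \ref{main-Theorem2} with $q_1=q_2$, and for $\alpha\in[1,n)$ your chain $Y_k=X^{\rho_k}$ with $1/\rho_k$ linear in $k$ is exactly the paper's factorization $\beta=\prod_{i=1}^{n}\beta_i$ with $\beta_i=\frac{(n-i+1)\beta+i-1}{(n-i)\beta+i}$, since $\beta_1\cdots\beta_i=\frac{n\beta}{(n-i)\beta+i}=\rho_{n-i}$. Your explicit verification that the step-wise ball-measure condition telescopes to the $n$-th root of \eqref{3.22x} is just a spelled-out version of the paper's displayed inequality $|B|^{\frac{\alpha}{n^2}}\lesssim\|\mathbf{1}_B\|_{X^{\beta_1\cdots\beta_{i-1}}}^{\frac{\beta_i-1}{\beta_i}}$, so the two arguments coincide.
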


\begin{proof}
Let all the symbols be the same as in the present theorem.
We only prove (ii) because the proof of (i) is similar.

Next, we show (ii).
The case $\alpha\in(0,1)$ follows from Theorem \ref{main-Theorem2} with $q_1=q_2$ therein.
We now show the case $\alpha\in[1,n)$.
In this case, we have $\frac{\alpha}{n}\in(0,1)$.
In addition, for any $i\in\{1,\ldots,n\}$, let
$$\beta_i:=\frac{(n-i+1)\beta+i-1}{(n-i)\beta+i}.
$$
It is easy to show
$\beta=\Pi_{i=1}^{n}\beta_i$. Moreover, by
\eqref{3.22x}, we conclude that, for any $i\in\{1,\ldots,n\}$,
\begin{align*}
|B|^{\frac{\alpha}{n^2}}\lesssim\|\mathbf{1}_B
\|_{X^{\beta_1\cdots\beta_{i-1}}}
^{\frac{\beta_i-1}{\beta_i}}.
\end{align*}
Using this, the range of $s$, the definition of $\beta_i$
for any $i\in\{1,\ldots,n\}$, and the obtained result for the case $\alpha\in(0,1)$,
we find that, for any $i\in\{1,\ldots,n\}$,
$\widetilde{I}_{\frac{\alpha}{n}}$ is bounded from
$\mathcal{L}_{X^{\beta_1\cdots\beta_{i}},q,s,\beta_1\cdots\beta_{i}d}(\rn)$
to
$\mathcal{L}_{X^{\beta_1\cdots\beta_{i-1}},q,s,\beta_1\cdots\beta_{i-1}d}(\rn)$,
which further implies that $(\wz
I_\frac{\alpha}{n})^{n+1-i}(g)\in\mathcal{L}_{X^{\beta_1
\cdots\beta_{i-1}},q,s,\beta_1\cdots\beta_{i-1}d}(\rn)$.
This finishes the proof of (ii), and hence of Theorem \ref{main-corollary}.
\end{proof}

We now show the necessity of Theorem \ref{thm-main-I}.
To be precise, we prove the following theorem.

\begin{theorem}\label{thm-main-II}
Let all the symbols be the same as in Theorem \ref{thm-main-I}.
Further assume that $\widetilde{I}_{\alpha}$ is bounded from
$\mathcal{L}_{X^{\beta},1,s,{\beta d}}(\rn)$
to $\mathcal{L}_{X,1,s,d}(\rn)$, namely,
there exists a positive constant $C$ such that, for any
$f\in \mathcal{L}_{X^{\beta},1,s,{\beta d}}(\rn)$,
\begin{align*}
\lf\|\widetilde{I}_{\alpha}(f)
\r\|_{\mathcal{L}_{X,1,s,d}(\rn)}
\leq C \|f\|_{\mathcal{L}_{X^{\beta},1,s,{\beta d}}(\rn)}.
\end{align*}
Then there exists a positive constant $\wz C$
such that, for any ball $B\in\mathbb{B}(\rn)$,
\begin{align*}
|B|^{\frac{\alpha}{n}}\leq \wz C
\|\mathbf{1}_B\|_X^{\frac{\beta-1}{\beta}}.
\end{align*}
\end{theorem}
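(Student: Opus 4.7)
The strategy is to deduce Theorem \ref{thm-main-II} from the necessity half of the known characterization of the boundedness of the classical fractional integral on Hardy-type spaces, namely \cite[Theorem 3.7]{cjy-01}, which asserts that $I_\alpha:H_X(\rn)\to H_{X^\beta}(\rn)$ is bounded if and only if \eqref{3.12x} holds. Accordingly, I would show that the assumed boundedness of $\wz I_\alpha:\mathcal{L}_{X^\beta,1,s,\beta d}(\rn)\to\mathcal{L}_{X,1,s,d}(\rn)$ forces the boundedness of $I_\alpha:H_X(\rn)\to H_{X^\beta}(\rn)$ through a pre-duality argument on atoms; the dual identifications $(H_X(\rn))^{*}=\mathcal{L}_{X,1,s,d}(\rn)$ and $(H_{X^\beta}(\rn))^{*}=\mathcal{L}_{X^\beta,1,s,\beta d}(\rn)$ provided by \cite[Theorem 3.14]{zhyy2022} then close the loop.

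The central ingredient is an adjoint identity on atoms: for every $(X,1,s)$-atom $a$ of $H_X(\rn)$ supported in some ball $B_a$ and every $g\in\mathcal{L}_{X^\beta,1,s,\beta d}(\rn)$, I would establish
$$
\int_{\rn}I_\alpha(a)(x)\,g(x)\,dx=\int_{\rn}a(y)\,\wz I_\alpha(g)(y)\,dy.
$$
To see this, one expands $\wz I_\alpha(g)(y)$ via Definition \ref{I-w}, applies Fubini, and observes that the Taylor-correction piece
$\sum_{|\gamma|\le s}\gamma!^{-1}\partial_{y}^{\gamma}(|y-x|^{\alpha-n})|_{y=x_0}(y-x_0)^{\gamma}\mathbf{1}_{\rn\setminus B_0}(x)$
is a polynomial in $y$ of degree at most $s$, hence integrates to zero against $a$ by the vanishing-moment property of atoms; the remaining kernel piece reproduces exactly $\int I_\alpha(a)(x)g(x)\,dx$. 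Absolute integrability justifying Fubini comes from the size and support estimates of $a$ together with the tail estimate \eqref{suita} enjoyed by $g$ (via Lemma \ref{lem-suit}) and the improved decay of $I_\alpha(a)$ at infinity that follows from the $s+1$ vanishing moments of $a$.

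Combining the adjoint identity with the assumed boundedness of $\wz I_\alpha$ and the dual pairing yields
$$
\biggl|\int_{\rn}I_\alpha(a)(x)\,g(x)\,dx\biggr|\le\|a\|_{H_X(\rn)}\,\|\wz I_\alpha(g)\|_{\mathcal{L}_{X,1,s,d}(\rn)}\le C\,\|a\|_{H_X(\rn)}\,\|g\|_{\mathcal{L}_{X^\beta,1,s,\beta d}(\rn)}.
$$
Taking the supremum over $g$ in the unit ball of $\mathcal{L}_{X^\beta,1,s,\beta d}(\rn)=(H_{X^\beta}(\rn))^{*}$ and using that atoms are a norming family through the atomic decomposition of $H_X(\rn)$ gives $\|I_\alpha(a)\|_{H_{X^\beta}(\rn)}\lesssim\|a\|_{H_X(\rn)}$, which extends by linearity and density to the boundedness of $I_\alpha:H_X(\rn)\to H_{X^\beta}(\rn)$. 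The necessity direction of \cite[Theorem 3.7]{cjy-01} then delivers the desired inequality $|B|^{\alpha/n}\le\wz C\,\|\mathbf{1}_B\|_X^{(\beta-1)/\beta}$.

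The main technical obstacle is the rigorous justification of the Fubini interchange and the finiteness of the iterated integrals in the adjoint identity; this is essentially the same delicate estimate that underlies the forthcoming general adjoint result Theorem \ref{dual-I-wI}, but restricted to atoms it simplifies considerably because atoms are compactly supported, bounded, and moment-vanishing, so the Taylor-correction in $\wz I_\alpha$ secures decay of the integrand away from $B_a$. A secondary subtlety is that the dual supremum recovers the $H_{X^\beta}(\rn)$-norm of $I_\alpha(a)$, which is handled by atoms being a norming family for $H_{X^\beta}(\rn)$ through the duality with $\mathcal{L}_{X^\beta,1,s,\beta d}(\rn)$.
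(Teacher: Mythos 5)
Your overall route is the same as the paper's: use an adjoint identity on atoms to transfer the assumed boundedness of $\wz I_\alpha$ into the uniform bound $\|I_\alpha(a)\|_{H_{X^\beta}(\rn)}\lesssim\|a\|_{H_X(\rn)}$, and then invoke the necessity part of \cite[Theorem 3.7]{cjy-01}. However, there is a genuine gap at the decisive step. After you obtain
$\lf|\int_{\rn}I_\alpha(a)(x)g(x)\,dx\r|\lesssim\|a\|_{H_X(\rn)}\|g\|_{\mathcal{L}_{X^{\beta},1,s,\beta d}(\rn)}$,
you take the supremum over $g$ and claim this recovers $\|I_\alpha(a)\|_{H_{X^\beta}(\rn)}$ "by duality". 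But the duality of Lemma \ref{2t1} only gives $\|f\|_{H_{X^\beta}(\rn)}\sim\sup_{\|g\|\le 1}|\langle L_g,f\rangle|$ (via the canonical embedding argument of Lemma \ref{lem-adj-01}), where $\langle L_g,\cdot\rangle$ is the \emph{abstract extension} of the integral pairing from the finite atomic space. Since $I_\alpha(a)$ is not a finite linear combination of atoms but only a molecule, the identity $\langle L_g,I_\alpha(a)\rangle=\int_{\rn}I_\alpha(a)(x)g(x)\,dx$ is not automatic; without it your integral estimate says nothing about $\|I_\alpha(a)\|_{H_{X^\beta}(\rn)}$. This identification is exactly what the paper proves in Lemma \ref{thm-dual-T}, and it requires the special atomic decomposition of molecules converging both pointwise \emph{and} in $H_X(\rn)$ (Proposition \ref{HK-mole}) together with the fact that $I_\alpha(a)$ is an $(X^\beta,q,s-n+1,\tau)$-molecule (Lemma \ref{lemma-atm}). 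The paper explicitly flags this as one of its main contributions; your phrase "atoms being a norming family" does not supply it.

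Two secondary points. First, the objects you pair with should be $(X,q,s)$-atoms with $q>\max\{1,\beta p_0\}$ (the paper uses $(X,\infty,s)$-atoms, which are also $(X,p,s)$-atoms for suitable $p\in(1,\tfrac{n}{\alpha})$); an ``$(X,1,s)$-atom'' is not covered by Definition \ref{atom} and would not give $I_\alpha(a)$ enough integrability for the molecular estimate. Second, you do not need (and should avoid) the step "extends by linearity and density to the boundedness of $I_\alpha$ on all of $H_X(\rn)$": for quasi-Banach Hardy spaces a uniform bound on individual atoms does not by itself yield boundedness on the whole space, and the paper sidesteps this by observing that the necessity proof of \cite[Theorem 3.7]{cjy-01} only tests $I_\alpha$ on atoms. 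Your Fubini computation for $\int_{\rn}I_\alpha(a)g=\int_{\rn}a\,\wz I_\alpha(g)$ is fine in spirit and matches the paper's appeal to \cite[Lemma 3.10]{jtyyz2}.
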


To prove Theorem \ref{thm-main-II}, we first recall the
concept of the Hardy space
$H_X(\rn)$ associated with $X$, originally studied in \cite{SHYY}.
Then we establish several technique lemmas.

In what follows, denote by $\mathcal{S}(\rn)$ the space of all
Schwartz functions equipped with the topology
determined by a well-known countable family of norms, and by
$\mathcal{S}'(\rn)$ its topological dual space equipped with the
weak-$*$ topology. For any $N\in\mathbb{N}$ and
$\phi\in\mathcal{S}(\rn)$, let
\begin{equation*}
p_N(\phi):=\sum_{\alpha\in\mathbb{Z}
^{n}_{+},|\alpha|\leq N}
\sup_{x\in\rn}(1+|x|)^{N+n}|
\partial^{\alpha}\phi(x)|
\end{equation*}
and
\begin{equation*}
\mathcal{F}_N(\rn):=\left\{\phi\in\mathcal{S}
(\rn):\ p_N(\phi)\in[0,1]\right\},
\end{equation*}
where, for any $\alpha:=(\alpha_1,\ldots,\alpha_n)\in\mathbb{Z}^n_+$,
$|\alpha|:=\alpha_1+\cdots+\alpha_n$ and
$\partial^\alpha:=(\frac{\partial}{\partial
x_1})^{\alpha_1}\cdots(\frac{\partial}{\partial
x_n})^{\alpha_n}$. Moreover, for any $r\in\rr$, we denote by $\lfloor r\rfloor$ (resp., $\lceil r\rceil$) the
\emph{maximal} (resp., \emph{minimal})
\emph{integer not greater} (resp., \emph{less}) \emph{than} $r$.

\begin{definition}\label{2d1}
Let $X$ be a ball quasi-Banach
function space and $N\in\nn$ be sufficiently
large. Then the \emph{Hardy
space} $H_X(\rn)$ is defined to be
the set of all the $f\in\mathcal{S}'(\rn)$
such that
$$
\left\|f\right\|_{H_X(\rn)}
:=\left\|\mathcal{M}_N(f)\right\|_{X}<\infty,
$$
where the \emph{non-tangential
grand maximal function}
$\mathcal{M}_{N}(f)$
of $f\in\mathcal{S}'(\rn)$
is defined by setting, for any $x\in\rn$,
\begin{align*}
\mathcal{M}_{N}(f)(x):=
\sup\left\{|f*\phi_{t}(y)|:\
\phi\in\mathcal{F}_{N}(\rn),\
t\in(0,\infty),\ |x-y|<t\right\}.
\end{align*}
\end{definition}

\begin{remark}
Let all the symbols be the same as in Definition \ref{2d1}.
Assume that there exists an $r\in(0,\fz)$ such that
the Hardy--Littlewood maximal operator $\mathcal{M}$ is bounded on $X^{\frac{1}{r}}$.
If $N\in[\lfloor \frac{n}{r}+1\rfloor,\fz)\cap\mathbb{N}$, then,
by \cite[Theorem 3.1]{SHYY}, we find that the Hardy space
$H_X(\rn)$ is independent of
the choice of $N$.
\end{remark}

We also recall the following definitions of both the
$(X,q,s)$-atom and the finite atomic Hardy space
$H_{\mathrm{fin}}^{X,q,s,d}(\rn)$ which are just, respectively,
\cite[Definition 3.5]{SHYY}
and \cite[Definition 1.9]{yyy20}.

\begin{definition}\label{atom}
Let $X$ be a ball quasi-Banach function space, $q\in(1,\infty]$, and
$s\in\zz_+$.
Then a measurable function $a$ on $\rn$ is called an
$(X,q,s)$-\emph{atom}
if there exists a ball $B\in\mathbb{B}(\rn)$ such that
\begin{enumerate}
\item[(i)] $\supp\,(a):=\{x\in\rn:\ a(x)\neq0\}\subset B$;
\item[(ii)]
$\|a\|_{L^q(\rn)}\le\frac{|B|^{\frac{1}{q}}}{\|\mathbf{1}_B\|_X}$;
\item[(iii)] $\int_{\rn} a(x)x^\gamma\,dx=0$ for any
$\gamma:=(\gamma_1,\ldots,\gamma_n)\in\zz_+^n$ with
$|\gamma|:=\gamma_1+\cdots+\gamma_n\le s$,
here and thereafter, for any $x:=(x_1,\ldots,x_n)\in\rn$,
$x^\gamma:=x_1^{\gamma_1}\cdots x_n^{\gamma_n}$.
\end{enumerate}
\end{definition}

\begin{definition}\label{finatom}
Let both $X$ and $p_-$ satisfy Assumption \ref{assump1}. Assume that
$r_0\in(0,\min\{1,p_-\})$ and $p_0\in(r_0,\infty)$
satisfy Assumption \ref{assump2}.
Let $s\in[\lfloor n(\frac{1}{\min\{1,p_-\}}-1)\rfloor,\infty)\cap\zz_+$,
$d\in(0,r_0]$, and $q\in(\max\{1,p_0\},\infty]$.
The \emph{finite atomic Hardy space}
$H_{\mathrm{fin}}^{X,q,s,d}({{\rr}^n})$,
associated with $X$, is defined to be the set of all finite
linear combinations of $(X,q,s)$-atoms. The quasi-norm
$\|\cdot\|_{H_{\mathrm{fin}}^{X,q,s,d}({{\rr}^n})}$ in
$H_{\mathrm{fin}}^{X,q,s,d}({{\rr}^n})$
is defined by setting, for any $f\in
H_{\mathrm{fin}}^{X,q,s,d}({{\rr}^n})$,
\begin{align*}
\|f\|_{H_{\mathrm{fin}}^{X,q,s,d}({{\rr}^n})}&:=\inf\left\{\left\|
\left[\sum_{j=1}^{N}
\left(\frac{{\lambda}_j}{\|{\mathbf{1}}_{B_j}\|_X}\right)^d
{\mathbf{1}}_{B_j}\right]^{\frac1d}\right\|_{X}
\right\},
\end{align*}
where the infimum is taken over all finite linear combinations of $(X,q,s)$-atoms of
$f$, namely, $N\in\nn$,
$f=\sum_{j=1}^{N}\lambda_ja_j$,
$\{\lambda_j\}_{j=1}^{N}\subset[0,\infty)$, and $\{a_j\}_{j=1}^{N}$
being
$(X,q,s)$-atoms supported, respectively,
in the balls $\{B_j\}_{j=1}^{N}\subset\mathbb{B}(\rn)$.
\end{definition}

We also recall the following several lemmas.
Recall that $X$ is said to have an \emph{absolutely continuous
quasi-norm} if,
for any $f\in X$ and any measurable subsets $\{E_j\}_{j\in\nn}\subset \rn$
with $E_{j+1}\subset E_j$ for any $j\in\nn$, and $\bigcap_{j\in\nn}
E_j=\emptyset$,
$\|f\mathbf{1}_{E_j}\|_{X}\downarrow 0$ as $j\to\fz$.
The following lemma is just \cite[Theorem 3.14]{zhyy2022}.

\begin{lemma}\label{2t1}
Let $X$, $d$, $q$, and $s$ be the same as in Definition \ref{finatom},
and $X$ have an absolutely continuous quasi-norm.
Then the dual space of $H_X({{\rr}^n})$, denoted by
$(H_X({{\rr}^n}))^*$,
is $\mathcal{L}_{X,q',s,d}({{\rr}^n})$ with
$\frac{1}{q}+\frac{1}{q'}=1$ in the following sense:
\begin{enumerate}
\item[{\rm (i)}] Let $g\in\mathcal{L}_{X,q',d,s}({{\rr}^n})$.
Then the linear functional
\begin{align}\label{2te1}
L_g:\ f\to \lf\langle L_g,f\r\rangle:=\int_{{{\rr}^n}}f(x)g(x)\,dx,
\end{align}
initially defined for any $f\in
H_{\mathrm{fin}}^{X,q,s,d}({{\rr}^n})$,
has a bounded extension to $H_X({{\rr}^n})$.

\item[{\rm (ii)}] Conversely, any continuous linear
functional on $H_X(\rn)$ arises as in \eqref{2te1}
with a unique $g\in\mathcal{L}_{X,q',s,d}({{\rr}^n})$.
\end{enumerate}
Moreover,
$\|g\|_{\mathcal{L}_{X,q',s,d}({{\rr}^n})}\sim\|L_g\|_{(H_X({{\rr}^n}))^*}$
with the positive equivalence constants independent of both $g$ and $L_g$.
\end{lemma}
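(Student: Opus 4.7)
The plan is to prove the duality $(H_X(\rn))^* = \mathcal{L}_{X,q',s,d}(\rn)$ via the classical Taibleson--Weiss scheme, adapted to the ball quasi-Banach setting, using crucially the atomic decomposition of $H_X(\rn)$ (\cite[Theorem 3.6]{SHYY}) together with the absolute continuity of the quasi-norm of $X$ (which guarantees the density of $H_{\mathrm{fin}}^{X,q,s,d}(\rn)$ in $H_X(\rn)$ in an appropriate sense). The two implications are handled by essentially dual arguments.

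For the \emph{easier direction} (i), I would start with $g\in\mathcal{L}_{X,q',s,d}(\rn)$ and first verify boundedness of $L_g$ on $H_{\mathrm{fin}}^{X,q,s,d}(\rn)$. Given any finite atomic representation $f=\sum_{j=1}^N \lambda_j a_j$ with $(X,q,s)$-atoms $a_j$ supported in balls $B_j$, the vanishing moments of $a_j$ let me replace $g$ by $g-P_{B_j}^{(s)}(g)$ inside each $\int a_j g\,dx$. H\"older's inequality combined with the size estimate $\|a_j\|_{L^q(\rn)}\le |B_j|^{1/q}/\|\mathbf{1}_{B_j}\|_X$ yields
\begin{align*}
\lf|\int_{\rn}a_j(x)g(x)\,dx\r| \le \frac{|B_j|}{\|\mathbf{1}_{B_j}\|_X}\lf[\fint_{B_j}\lf|g(x)-P_{B_j}^{(s)}(g)(x)\r|^{q'}dx\r]^{1/q'}.
\end{align*}
Summing over $j$ with weights $\lambda_j$ and invoking the very definition of $\|g\|_{\mathcal{L}_{X,q',s,d}(\rn)}$ (the supremum there contains precisely this sum) produces
\begin{align*}
|\langle L_g,f\rangle| \le \|g\|_{\mathcal{L}_{X,q',s,d}(\rn)}\lf\|\lf\{\sum_{j=1}^N\lf(\frac{\lambda_j}{\|\mathbf{1}_{B_j}\|_X}\r)^d\mathbf{1}_{B_j}\r\}^{1/d}\r\|_X.
\end{align*}
Taking the infimum over admissible decompositions gives $|\langle L_g,f\rangle|\lesssim \|g\|_{\mathcal{L}_{X,q',s,d}(\rn)}\|f\|_{H_{\mathrm{fin}}^{X,q,s,d}(\rn)}$, which equals $\|f\|_{H_X(\rn)}$ on the finite atomic space. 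The absolute continuity of $\|\cdot\|_X$ lets me extend $L_g$ continuously to all of $H_X(\rn)$.

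For the \emph{harder direction} (ii), given $L\in(H_X(\rn))^*$, I would construct $g$ locally on each ball and then glue. Fix a ball $B\in\mathbb{B}(\rn)$ and consider the subspace $L^{q}_{s,0}(B)$ of $L^q(\rn)$ consisting of functions supported in $B$ with vanishing moments up to order $s$. Every such $\varphi$ is, up to a normalizing constant $\|\varphi\|_{L^q(\rn)}\|\mathbf{1}_B\|_X/|B|^{1/q}$, an $(X,q,s)$-atom, and hence $\|\varphi\|_{H_X(\rn)}\lesssim \|\mathbf{1}_B\|_X\,|B|^{-1/q}\|\varphi\|_{L^q(\rn)}$. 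Thus $L$ restricts to a bounded linear functional on $L^{q}_{s,0}(B)\subset L^q(\rn)$; by Hahn--Banach and the Riesz representation on $L^q(\rn)$, there exists $g_B\in L^{q'}(B)$ representing this restriction, unique modulo $\mathcal{P}_s(\rn)$. A standard compatibility argument shows that, for two balls $B\subset B'$, $g_B-g_{B'}\in\mathcal{P}_s(B)$, so a single equivalence class $g\in L^{q'}_{\mathrm{loc}}(\rn)/\mathcal{P}_s(\rn)$ is well defined.

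The crux, and the main obstacle, is to verify $g\in\mathcal{L}_{X,q',s,d}(\rn)$ with the correct quantitative bound. I would take any finite family $\{B_j\}_{j=1}^m$ and $\{\lambda_j\}_{j=1}^m\subset[0,\infty)$ appearing in the defining supremum of $\|g\|_{\mathcal{L}_{X,q',s,d}(\rn)}$. Using $L^q$--$L^{q'}$ duality on each $B_j$, I pick $b_j\in L^{q}_{s,0}(B_j)$ with $\|b_j\|_{L^q(\rn)}=1$ realizing the $L^{q'}$-mean of $g-P_{B_j}^{(s)}(g)$ up to $\varepsilon$; then $b_j$ (suitably normalized) is an $(X,q,s)$-atom, and the combination $f:=\sum_j \mu_j b_j$ with $\mu_j:=\lambda_j|B_j|^{1/q}/\|\mathbf{1}_{B_j}\|_X$ is a finite atomic element satisfying
\begin{align*}
\|f\|_{H_X(\rn)} \lesssim \lf\|\lf\{\sum_{j=1}^m\lf(\frac{\lambda_j}{\|\mathbf{1}_{B_j}\|_X}\r)^d\mathbf{1}_{B_j}\r\}^{1/d}\r\|_X
\end{align*}
by the atomic characterization of $H_X(\rn)$. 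Applying $L$ to $f$ recovers (via the local representation and the vanishing moments of each $b_j$) the sum $\sum_j\frac{\lambda_j|B_j|}{\|\mathbf{1}_{B_j}\|_X}[\fint_{B_j}|g-P_{B_j}^{(s)}(g)|^{q'}]^{1/q'}$ up to $\varepsilon$. Combining and letting $\varepsilon\to0$ gives $\|g\|_{\mathcal{L}_{X,q',s,d}(\rn)}\lesssim \|L\|_{(H_X(\rn))^*}$. The density of $H_{\mathrm{fin}}^{X,q,s,d}(\rn)$ in $H_X(\rn)$, which requires the absolute continuity of $\|\cdot\|_X$, ensures that $L=L_g$ on all of $H_X(\rn)$; uniqueness of $g$ (modulo $\mathcal{P}_s(\rn)$) follows because atoms with vanishing moments form a separating family. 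The delicate points are the compatibility of the local representatives, the careful atomic normalization so that the $H_X$-norm of the test function $f$ matches precisely the weighted $\|\cdot\|_X$ expression appearing in the definition of $\|g\|_{\mathcal{L}_{X,q',s,d}(\rn)}$, and the passage from the finite atomic space to all of $H_X(\rn)$.
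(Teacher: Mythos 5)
First, note that the paper itself offers no proof of this lemma: it is imported verbatim as \cite[Theorem 3.14]{zhyy2022}, so there is no in-paper argument to compare against. Your sketch follows the classical Taibleson--Weiss duality scheme \cite{tw80}, which is indeed the route taken in \cite{yyy20,zhyy2022}: part (i) by pairing atoms with $g-P_{B}^{(s)}(g)$ and H\"older, part (ii) by representing the restriction of $L$ to the subspace of $L^q(B)$-functions with vanishing moments, gluing the local representatives modulo $\mathcal{P}_s(\rn)$, and testing against carefully normalized atoms to recover the Campanato norm. The normalizations you chose do match the definition of $\|g\|_{\mathcal{L}_{X,q',s,d}(\rn)}$, and the well-definedness/uniqueness and density remarks are standard.

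There are, however, two points where the sketch passes over genuine difficulties. First, in part (i) you deduce $|\langle L_g,f\rangle|\lesssim\|g\|_{\mathcal{L}_{X,q',s,d}(\rn)}\,\|f\|_{H_{\mathrm{fin}}^{X,q,s,d}(\rn)}$ and then assert that the finite atomic quasi-norm ``equals $\|f\|_{H_X(\rn)}$ on the finite atomic space.'' The inequality you actually need, $\|f\|_{H_{\mathrm{fin}}^{X,q,s,d}(\rn)}\lesssim\|f\|_{H_X(\rn)}$ for finite linear combinations of atoms, is precisely the finite atomic characterization of $H_X(\rn)$; it is a substantive theorem (this is where Assumptions \ref{assump1}--\ref{assump2}, the restriction $q>\max\{1,p_0\}$, and the absolute continuity of $\|\cdot\|_X$ really enter, and where the case $q=\infty$ needs a separate treatment), not a definitional identity, so it must be quoted or proved rather than asserted. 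Second, Definition \ref{finatom} allows $q=\infty$, and in that case the Hahn--Banach/Riesz representation step in your part (ii) breaks down, since bounded functionals on subspaces of $L^\infty(B)$ are not represented by $L^1(B)$-functions; the standard fix is to run the construction for some finite exponent $q_0\in(\max\{1,p_0\},\infty)$ and then transfer the conclusion using the independence of the Campanato space from the exponent (cf. Remark \ref{sgjs}(i)), which your sketch should at least mention. A smaller cosmetic point: the norming functions $b_j$ do not automatically have vanishing moments; one must replace the $L^{q}$-norming function $h$ by $h-P_{B_j}^{(s)}(h)$ (which still recovers the oscillation because $g-P_{B_j}^{(s)}(g)$ is orthogonal to $\mathcal{P}_s(\rn)$ on $B_j$, and whose $L^q$-norm is controlled via Remark \ref{rem-ball-B2}). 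With these ingredients supplied, your outline coincides with the proof in \cite{zhyy2022}.
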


\begin{remark}\label{rem-3.19}
Let $X$ be a concave ball quasi-Banach function space and
$d\in(0,1]$.
In this case, Lemma \ref{2t1} coincides with \cite[Theorem
1.12]{yyy20}
which shows that the dual space of $H_X(\rn)$ is
$\mathcal{L}_{X,q',s}({{\rr}^n})$ in this case.
\end{remark}

The following lemma is just \cite[Theorem 3.7]{cjy-01} which gives the
boundedness of $I_\alpha$ from $H_X(\rn)$ to $H_{X^\beta}(\rn)$.

\begin{lemma}\label{thm-Ia-02}
Let $X$, $p_-$, $r_0\in(0,\min\{\frac{1}{\beta},p_-\})$, and
$p_0\in(r_0,\infty)$ satisfy both
Assumptions \ref{assump1} and \ref{assump2} with $\beta\in(1,\infty)$.
Further assume that $X$ has an absolutely continuous quasi-norm.
Let $\alpha\in(0,n)$
and $I_\alpha$ be the same as in \eqref{cla-I}.
Then $I_\alpha$ can be extended to a unique bounded linear
operator, still denoted by $I_\alpha$, from $H_X(\rn)$ to
$H_{X^\beta}(\rn)$, namely,
there exists a positive constant $C$ such that, for any $f\in
H_X(\rn)$,
\begin{align*}
\lf\|I_{\alpha}(f)\r\|_{H_{X^{\beta}}(\rn)}\leq C\|f\|_{H_X(\rn)}
\end{align*}
if and only if there exists a positive constant $\wz C$ such that,
for any ball $B\in\mathbb{B}(\rn)$,
\begin{align*}
|B|^{\frac{\alpha}{n}}\leq \wz C
\|\mathbf{1}_B\|_X^{\frac{\beta-1}{\beta}}.
\end{align*}
\end{lemma}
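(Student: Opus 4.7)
My plan is to prove the equivalence by handling the two implications separately; both rely heavily on the atomic/molecular theory of $H_X(\rn)$ from \cite{SHYY}, the Hardy--Littlewood--Sobolev theorem (Lemma \ref{fractional}), Fefferman--Stein vector-valued maximal inequalities (Assumption \ref{assump1}), and the basic identity $\|\mathbf{1}_B\|_{X^\beta}=\|\mathbf{1}_B\|_X^{1/\beta}$ coming from Definition \ref{Debf}. The bridge between the ball condition and the space-scale bookkeeping is the elementary identity
\begin{align*}
\frac{|B|^{\alpha/n}}{\|\mathbf{1}_B\|_X}\lf\|\mathbf{1}_B\r\|_{X^{\beta}}
=\frac{|B|^{\alpha/n}}{\|\mathbf{1}_B\|_X^{(\beta-1)/\beta}},
\end{align*}
so the ball condition $|B|^{\alpha/n}\le\wz C\|\mathbf 1_B\|_X^{(\beta-1)/\beta}$ is exactly what converts the Hardy--Littlewood--Sobolev normalization for $I_\alpha$ of an $(X,q,s)$-atom at scale $B$ into the correct $(X^\beta,q_1,s)$-normalization.

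For sufficiency, I would fix $q\in(\max\{1,p_0\},\infty)$ large enough that $q_1$ defined by $\frac1{q_1}=\frac1q-\frac\alpha n$ still exceeds $\max\{1,\beta p_0\}$, then apply the atomic decomposition $f=\sum_j\lambda_j a_j$ of any $f\in H_X(\rn)\cap L^2(\rn)$, where each $a_j$ is an $(X,q,s)$-atom supported in a ball $B_j$ with vanishing moments up to order $s\ge\lfloor n(1/\min\{1,\beta p_-\}-1)\rfloor$. I would then show that, up to the harmless factor furnished by the ball condition, $I_\alpha(a_j)$ is an $(X^\beta,q_1,s)$-molecule adapted to $B_j$: the size estimate on $2B_j$ comes from Lemma \ref{fractional}(ii) combined with $\|a_j\|_{L^q}\le |B_j|^{1/q}/\|\mathbf 1_{B_j}\|_X$ and the ball condition, giving $\|I_\alpha(a_j)\|_{L^{q_1}(2B_j)}\lesssim |B_j|^{1/q_1}/\|\mathbf 1_{B_j}\|_{X^\beta}$; the off-diagonal decay on $2^{k+1}B_j\setminus 2^kB_j$ comes from Taylor-expanding $|x-y|^{-(n-\alpha)}$ around the center $x_{B_j}$ to order $s$, exploiting the $s$-fold moment cancellation of $a_j$ and then invoking the ball condition on $2^kB_j$ via \eqref{key-c}. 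The molecular characterization of $H_{X^\beta}(\rn)$ from \cite{SHYY}, together with Assumption \ref{assump1} applied to $X^\beta$, then permits one to reassemble: $\|I_\alpha f\|_{H_{X^\beta}}\lesssim\|\{\sum_j(\lambda_j/\|\mathbf 1_{B_j}\|_X)^d\mathbf 1_{B_j}\}^{1/d}\|_X\sim\|f\|_{H_X}$. A standard density argument using the absolute continuity of $\|\cdot\|_X$ extends $I_\alpha$ uniquely to all of $H_X(\rn)$.

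For necessity I would use a test-atom argument. Given any ball $B=B(x_0,r)$, pick a smooth bump $\varphi\ge c\mathbf 1_{B(x_0,r/4)}$ with $\|\varphi\|_{L^\infty}\le1$ and $\supp\varphi\subset B$, then form a symmetric odd perturbation $a:=\|\mathbf 1_B\|_X^{-1}(\varphi-\tau_{2r e_1}\varphi)$ (together with further translated copies, if $s\ge1$, to kill all moments up to order $s$); up to enlarging $B$ by a fixed constant, $a$ is an $(X,\infty,s)$-atom and hence $\|a\|_{H_X(\rn)}\lesssim1$. A direct pointwise calculation shows that there is a ball $\wz B$ of radius comparable to $r$, centered at one of the bumps and disjoint from the others, on which $|I_\alpha a(x)|\gtrsim r^\alpha/\|\mathbf 1_B\|_X\sim|B|^{\alpha/n}/\|\mathbf 1_B\|_X$; the same lower bound propagates to $\mathcal M_N(I_\alpha a)$ on $\wz B$. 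Applying the ball quasi-norm of $X^\beta$ and using $\|\mathbf 1_{\wz B}\|_{X^\beta}\sim\|\mathbf 1_B\|_{X^\beta}=\|\mathbf 1_B\|_X^{1/\beta}$ by \eqref{key-c} gives
\begin{align*}
\frac{|B|^{\alpha/n}}{\|\mathbf 1_B\|_X^{(\beta-1)/\beta}}
\lesssim\lf\|I_\alpha a\r\|_{H_{X^\beta}(\rn)}
\lesssim\|a\|_{H_X(\rn)}\lesssim 1,
\end{align*}
which is exactly the required ball condition, uniformly in $B$.

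The routine part is sufficiency, which follows a now-standard atoms-to-molecules template once one has the ball condition in hand. The main obstacle is the necessity step: constructing a single test atom $a$ with $s$-fold vanishing moments whose fractional integral admits a clean \emph{pointwise lower} bound of size $|B|^{\alpha/n}/\|\mathbf 1_B\|_X$ on a region of measure $\sim|B|$. The moment cancellations that make $a$ an admissible atom tend to wipe out the leading-order behavior of $I_\alpha a$, so care is needed to arrange the bumps (and their signs) so that on the chosen region $\wz B$ the near-singular contribution dominates the cancelling contributions; once this is achieved the rest of the argument is mechanical.
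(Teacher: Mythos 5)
You should first note that the paper itself contains no proof of this statement: Lemma \ref{thm-Ia-02} is quoted verbatim from \cite[Theorem 3.7]{cjy-01}, so the only internal trace of the intended argument is Lemma \ref{lemma-atm}, which records the key step of that cited proof (the image of an atom under $I_\alpha$ is a molecule for $X^\beta$). Measured against that, your sufficiency outline is the same atoms-to-molecules template, but it contains a genuine inaccuracy: you claim $I_\alpha(a_j)$ is an $(X^\beta,q_1,s)$-molecule, i.e.\ with vanishing moments up to the full order $s$. This is not available in general. Since $a_j$ has moments vanishing only up to order $s$, the decay of $I_\alpha(a_j)$ at infinity is only of order $|x|^{\alpha-n-s-1}$, so the moments of $I_\alpha(a_j)$ of order near $s$ need not converge, let alone vanish; this is precisely why Lemma \ref{lemma-atm} requires $s\in[n-1,\infty)\cap\zz_+$ and produces an $(X^\beta,q,s-n+1,\tau)$-molecule, with the moment order dropped to $s-n+1$. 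Your argument must therefore take $s$ large enough at the outset and verify that the reduced order $s-n+1$ (or whatever order actually survives) still exceeds the threshold $\lfloor n(1/\min\{1,\beta p_-\}-1)\rfloor$ needed for the molecular characterization of $H_{X^\beta}(\rn)$; as written, the reassembly step is applied to molecules you have not actually produced.

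The necessity half is not yet a proof. With the separation $2re_1$ you propose, the ``far'' bump contributes to $I_\alpha a$ on $\wz B$ a term of size comparable to $r^\alpha\|\mathbf 1_B\|_X^{-1}$, i.e.\ of exactly the same order as the near-singular term, so the claimed pointwise lower bound $|I_\alpha a|\gtrsim |B|^{\alpha/n}/\|\mathbf 1_B\|_X$ on $\wz B$ does not follow without a quantitative arrangement (for instance, separating the bumps by $Mr$ with $M=M(n,\alpha,s)$ large, at the price of renormalizing by $\|\mathbf 1_{MB}\|_X$ via \eqref{key-c}); moreover, for $s\ge1$ one needs a multi-bump construction solving a Vandermonde-type system to kill all moments $|\gamma|\le s$ while preserving the lower bound, and you only gesture at this. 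You correctly identify this as the main obstacle, but identifying it does not discharge it: as it stands the implication ``boundedness $\Rightarrow$ ball condition'' is asserted, not proved. (Note also that the paper's own use of the necessity direction, in Theorem \ref{thm-main-II}, again simply ``repeats the proof of the necessity of \cite[Theorem 3.7]{cjy-01}'', so nothing in this article supplies the missing construction for you.)
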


The following lemma is just \cite[Lemma 2.20]{jtyyz3}.

\begin{lemma}\label{sum-g}
Let $q\in[1,\infty)$, $s\in\zz_+$, and $\theta\in(0,2^{-s})$.
Then there exists a positive constant $C$
such that, for any $f\in L^1_{\mathrm{loc}}(\rn)$
and any ball $B\in\mathbb{B}(\rn)$,
\begin{align*}
&\sum_{k=1}^{\infty}\theta^k\lf[\fint_{2^kB}\lf|f(x)
-P_{B}^{(s)}(f)(x)\r|^q\,dx\r]^{\frac{1}{q}}\\
&\quad\leq C\sum_{k=1}^{\fz}\theta^k\lf[\fint_{2^kB}\lf|f(x)
-P_{2^kB}^{(s)}(f)(x)\r|^q\,dx\r]^{\frac{1}{q}}.
\end{align*}
\end{lemma}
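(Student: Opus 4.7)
The plan is to decompose, for each $k\in\nn$, the difference on the left as
\[
f - P_B^{(s)}(f) = \lf[f - P_{2^kB}^{(s)}(f)\r] + \lf[P_{2^kB}^{(s)}(f) - P_B^{(s)}(f)\r]
\]
and apply Minkowski's inequality in $L^q(2^kB,\frac{dx}{|2^kB|})$. The first bracket contributes, after averaging, exactly the $k$th term of the right-hand side, so the whole task reduces to controlling the sum $\sum_{k=1}^\fz\theta^k[\fint_{2^kB}|Q_k|^q\,dx]^{1/q}$, where $Q_k:=P_{2^kB}^{(s)}(f)-P_B^{(s)}(f)$, by a constant multiple of the same right-hand side.

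To handle $Q_k$, I would telescope as $Q_k=\sum_{j=0}^{k-1}[P_{2^{j+1}B}^{(s)}(f)-P_{2^jB}^{(s)}(f)]$. Each summand is a polynomial of degree at most $s$, and its $L^\fz(2^jB)$-norm can be bounded by applying Remark \ref{rem-ball-B2} to the function $f-P_{2^{j+1}B}^{(s)}(f)$ over $2^jB$ (using that $P_{2^jB}^{(s)}$ is the identity on $\mathcal{P}_s(\rn)$, so that $P_{2^jB}^{(s)}(f)-P_{2^{j+1}B}^{(s)}(f)=P_{2^jB}^{(s)}(f-P_{2^{j+1}B}^{(s)}(f))$). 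Via H\"older's inequality on $2^{j+1}B$, this $L^\fz$-bound can then be promoted to
\[
\lf\|P_{2^{j+1}B}^{(s)}(f) - P_{2^jB}^{(s)}(f)\r\|_{L^\fz(2^jB)} \ls \lf[\fint_{2^{j+1}B}\lf|f(y)-P_{2^{j+1}B}^{(s)}(f)(y)\r|^q\,dy\r]^{\frac1q}.
\]

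The central geometric input is the dilation property of polynomials of total degree at most $s$: for any $P\in\mathcal{P}_s(\rn)$ and integers $k\geq j\geq 0$, one has $\|P\|_{L^\fz(2^kB)}\ls 2^{(k-j)s}\|P\|_{L^\fz(2^jB)}$, with an implicit constant depending only on $n$ and $s$. Combining this with the previous step and the equivalence, on the finite-dimensional space $\mathcal{P}_s(\rn)$ restricted to a ball, between the $L^q$-average and the $L^\fz$-norm, yields
\begin{align*}
\lf[\fint_{2^kB}|Q_k(x)|^q\,dx\r]^{\frac1q}
\ls \sum_{j=0}^{k-1}2^{(k-j)s}
\lf[\fint_{2^{j+1}B}\lf|f(y)-P_{2^{j+1}B}^{(s)}(f)(y)\r|^q\,dy\r]^{\frac1q}.
\end{align*}
Multiplying by $\theta^k$, summing over $k\in\nn$, swapping the order of summation via Fubini, and substituting $m:=k-j-1\geq 0$ reduces the inner sum to the geometric series $\sum_{m\geq 0}(2^s\theta)^m$, which converges precisely because of the hypothesis $\theta\in(0,2^{-s})$; a reindex $j\mapsto j+1$ then shows that the total is dominated by the right-hand side of the claim.

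The main obstacle is the sharp dilation exponent $s$ in the polynomial bound $\|P\|_{L^\fz(2^kB)}\ls 2^{(k-j)s}\|P\|_{L^\fz(2^jB)}$: one needs the constant to be independent of $k$, $j$, and $B$, because it is exactly this exponent $s$ that forces the threshold $\theta<2^{-s}$ on the convergence of the geometric series at the edge of the allowed range. A secondary, but purely technical, point is justifying the $L^\fz$-to-$L^q$-average passage uniformly across the nested balls $\{2^jB\}_{j\in\zz_+}$, which is handled by scaling the polynomials to the unit ball before invoking the equivalence of norms on $\mathcal{P}_s(\rn)$.
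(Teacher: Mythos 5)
Your proof is correct and is essentially the standard argument behind the cited result (the paper itself only refers to \cite[Lemma 2.20]{jtyyz3} without reproducing the proof): Minkowski to peel off the right-hand side, a telescoping sum for $P_{2^kB}^{(s)}(f)-P_B^{(s)}(f)$, the identity $P_{2^jB}^{(s)}(f)-P_{2^{j+1}B}^{(s)}(f)=P_{2^jB}^{(s)}(f-P_{2^{j+1}B}^{(s)}(f))$ combined with Remark \ref{rem-ball-B2} and H\"older, the dilation bound $\|P\|_{L^\fz(2^kB)}\ls 2^{(k-j)s}\|P\|_{L^\fz(2^jB)}$ for $P\in\mathcal{P}_s(\rn)$, and finally Fubini with the geometric series $\sum_m(2^s\theta)^m$, which converges exactly because $\theta\in(0,2^{-s})$. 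All steps, including the uniformity of the constants in $B$, $j$, $k$, and $f$, check out.
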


We also recall the following definition of $(X,q,s,\tau)$-molecules,
which is just \cite[Definition 3.8]{SHYY}.

\begin{definition}\label{def-mol}
Let $X$ be a ball quasi-Banach function space, $q\in(1,\infty]$,
$s\in\zz_+$, and $\tau\in(0,\fz)$. Then a measurable function $M$ on
$\rn$ is called an
\emph{$(X,q,s,\tau)$-molecule} centered at a ball
$B\in \mathbb{B}(\rn)$ if
\begin{enumerate}
\item[\rm(i)] for any $j\in\zz_+$,
\begin{align}\label{mole-01}
\lf\|M\mathbf{1}_{L_j}\r\|_{L^q(\rn)}
\leq 2^{-j\tau}\frac{|B|^{\frac{1}{q}}}{\|\mathbf{1}_{B}\|_{X}},
\end{align}
where $L_0:=B$ and, for any $j\in\nn$, $L_j:=2^{j}B\setminus
2^{j-1}B$;
\item[\rm(ii)] $\int_{\rn} M(x)x^\gamma\,dx=0$ for any
$\gamma\in\zz_+^n$ with $|\gamma|\le s$.
\end{enumerate}
\end{definition}

\begin{remark}\label{mole-atom}
Let all the symbols be the same as in Definition \ref{def-mol}.
It is easy to show that any $(X,q,s)$-atom is also an
$(X,q,s,\tau)$-molecule.
\end{remark}

By Lemma \ref{sum-g}, we have the following conclusion.

\begin{lemma}\label{lem-mol-01}
Let both $X$ and $p_-$ satisfy Assumption \ref{assump1},
$q\in[1,{\infty})$,
$\frac{1}{q}+\frac{1}{q'}=1$, $s\in\zz_+$, $d\in(0,\infty)$,
and
$\tau\in(n(\frac{1}{p_-}-\frac{1}{q}),\fz)
\cap(\frac{n}{q'}+s,\fz)$.
Then, for any $(X,q,s,\tau)$-molecule $M$
and any $g\in \mathcal{L}_{X,q',s,d}(\rn)$,
$Mg$ is integrable and, moreover, there exists a positive constant $C$,
independent of both $M$ and $g$, such that
\begin{align}\label{Mg-03}
\lf|\int_{\rn}M(x)g(x)\,dx\r|\leq C\|g\|_{\mathcal{L}_{X,q',s,d}(\rn)}.
\end{align}
\end{lemma}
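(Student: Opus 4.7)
Let $M$ be an $(X,q,s,\tau)$-molecule centered at the ball $B\in\mathbb{B}(\rn)$ and let $g\in\mathcal{L}_{X,q',s,d}(\rn)$. The plan is to first reduce $\int_{\rn}Mg$ to $\int_{\rn}M(g-P_{B}^{(s)}(g))$ via the vanishing moments of $M$, and then to estimate the resulting integral annulus by annulus using the decay \eqref{mole-01} combined with the Campanato-type control on $g$ inherited from $\mathcal{L}_{X,q',s}(\rn)$ through the inclusion \eqref{sub-01}.

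First I would verify that $M\cdot P_{B}^{(s)}(g)\in L^1(\rn)$, so that the vanishing moment identity $\int_{\rn}M(x)P_{B}^{(s)}(g)(x)\,dx=0$ is legitimate. On each annulus $L_j=2^jB\setminus 2^{j-1}B$ with $j\in\nn$, Remark \ref{rem-ball-B2} together with standard polynomial comparison yields $\|P_{B}^{(s)}(g)\mathbf{1}_{L_j}\|_{L^{q'}(\rn)}\ls 2^{js}|2^jB|^{1/q'}\fint_{B}|g(x)|\,dx$, while \eqref{mole-01} gives $\|M\mathbf{1}_{L_j}\|_{L^q(\rn)}\le 2^{-j\tau}|B|^{1/q}/\|\mathbf{1}_B\|_X$. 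Applying Hölder's inequality and summing in $j$ is possible precisely because $\tau>s+n/q'$, so that the geometric series $\sum_j 2^{j(-\tau+s+n/q')}$ converges. The $j=0$ term is trivial because $B$ is bounded.

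Once the cancellation is justified, write
\begin{align*}
\int_{\rn}M(x)g(x)\,dx
=\sum_{j=0}^{\fz}\int_{L_j}M(x)\lf[g(x)-P_{B}^{(s)}(g)(x)\r]\,dx.
\end{align*}
Apply Hölder's inequality on each $L_j$ and use \eqref{mole-01} to bound this sum by
\begin{align*}
\sum_{j=0}^{\fz}2^{-j\tau}\frac{|B|^{1/q}}{\|\mathbf{1}_B\|_X}\,|2^jB|^{1/q'}
\lf[\fint_{2^jB}\lf|g(x)-P_{B}^{(s)}(g)(x)\r|^{q'}\,dx\r]^{1/q'}.
\end{align*}
The weights are $\theta^j$ with $\theta:=2^{-\tau+n/q'}$, and the hypothesis $\tau>n/q'+s$ ensures $\theta<2^{-s}$. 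Hence Lemma \ref{sum-g} applies and permits the replacement of $P_{B}^{(s)}(g)$ by $P_{2^jB}^{(s)}(g)$ inside the average at the cost of a harmless constant. The $j=0$ term is already of the correct form.

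Having swapped the polynomials, invoke the inclusion \eqref{sub-01} to obtain $\fint_{2^jB}|g-P_{2^jB}^{(s)}(g)|^{q'}\le (\|\mathbf{1}_{2^jB}\|_X/|2^jB|)^{q'}\|g\|_{\mathcal{L}_{X,q',s,d}(\rn)}^{q'}$. After cancelling $|B|^{1/q}|2^jB|^{1/q'}/|2^jB|$ into $2^{-jn/q}|B|\|\mathbf{1}_B\|_X^{-1}$, the problem reduces to estimating
\begin{align*}
\|g\|_{\mathcal{L}_{X,q',s,d}(\rn)}\sum_{j=0}^{\fz}2^{-j(\tau+n/q)}\,\frac{\|\mathbf{1}_{2^jB}\|_X}{\|\mathbf{1}_B\|_X}.
\end{align*}
For any $r\in(0,p_-)$, Remark \ref{main-remark} gives $\|\mathbf{1}_{2^jB}\|_X\ls 2^{jn/r}\|\mathbf{1}_B\|_X$, so the geometric sum converges provided $\tau>n/r-n/q$. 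Selecting $r$ sufficiently close to $p_-$, which is possible since $\tau>n(1/p_--1/q)$, one obtains \eqref{Mg-03} with a constant independent of $M$ and $g$.

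The main technical hurdle I anticipate is verifying the vanishing moment step: $P_{B}^{(s)}(g)$ is a polynomial of degree $\le s$, and one must carefully estimate its growth on each $2^jB$ and balance this against the molecular decay. The two hypotheses on $\tau$ appear in exactly different places (one for Lemma \ref{sum-g} and the cancellation step, the other for the geometric sum involving $\|\mathbf{1}_{2^jB}\|_X/\|\mathbf{1}_B\|_X$), so the tightness of the assumption becomes transparent only after both estimates are written down.
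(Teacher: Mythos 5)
Your proposal is correct and follows essentially the same route as the paper's proof: reduce to $\int_{\rn}M(g-P_B^{(s)}(g))$ via the vanishing moments, decompose over the annuli $L_j$, apply H\"older with \eqref{mole-01}, swap $P_B^{(s)}(g)$ for $P_{2^jB}^{(s)}(g)$ via Lemma \ref{sum-g} using $\tau>\frac{n}{q'}+s$, and close the geometric series with \eqref{key-c} and $\tau>n(\frac{1}{p_-}-\frac{1}{q})$. Your justification of the integrability of $M\cdot P_B^{(s)}(g)$ (polynomial growth $2^{js}$ against the decay $2^{-j\tau}$) is in fact spelled out more carefully than in the paper, which disposes of that term in one line.
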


\begin{proof}
Let all the symbols be the same as in the present lemma.
Let $M$ be an $(X,q,s,\tau)$-molecule centered at a ball $B\in
\mathbb{B}(\rn)$. It is easy to show that,
for any $g\in \mathcal{L}_{X,q',s,d}(\rn)$,
\begin{align}\label{Mg-01}
&\int_{\rn}|M(x)g(x)|\,dx\\
&\quad\leq\int_{\rn}\lf|M(x)P_{B}^{(s)}(g)(x)\r|\,dx
+\int_{\rn}\lf|M(x)\lf[g(x)-P_{B}^{(s)}(g)(x)\r]\r|\,dx\noz\\
&\quad=:\mathrm{A}_1+\mathrm{A}_2.\noz
\end{align}
Using Definition \ref{def-mol}(ii), we know that $\mathrm{A}_1<\fz$.
Then we show $\mathrm{A}_2<\fz$. To this end,
let $L_0:=B$ and, for any $j\in\nn$, $L_j:=2^{j}B\setminus2^{j-1}B$.
Notice that $-\tau+\frac{n}{q'}<s$ due to
$\tau\in(\frac{n}{q'}+s,\fz)$.
Then, from this, the vanishing moments of $M$, the H\"older
inequality, Lemma \ref{sum-g}
with $\theta:=2^{-\tau+\frac{n}{q'}}\in(0,2^{-s})$, \eqref{key-c},
\eqref{mole-01},
$\tau\in(n(\frac{1}{p_-}-\frac{1}{q}),\fz)$,
and the definition of $\|\cdot\|_{\mathcal{L}_{X,q',s,d}(\rn)}$,
we infer that there exists an $r\in(\frac{nq}{\tau q+n},p_-)$
such that, for any $g\in \mathcal{L}_{X,q',s,d}(\rn)$,
\begin{align}\label{Mg-02}
\mathrm{A}_2
&=\sum_{j\in\zz_+}\int_{L_j}\lf|M(x)
\lf[g(x)-P_{B}^{(s)}(g)(x)\r]\r|\,dx\\
&\leq \sum_{j\in\zz_+}\lf\|M\mathbf{1}_{L_j}
\r\|_{L^q(\rn)}\lf|2^jB\r|^{\frac{1}{q'}}
\lf[\fint_{2^jB}\lf|g(x)-P_{B}^{(s)}(g)(x)\r|^{q'}\,dx\r]^{\frac{1}{q'}}\noz\\
&\leq \sum_{j\in\zz_+}2^{-j(\tau-\frac{n}{q'})}
\frac{|B|}{\|\mathbf{1}_{B}\|_{X}}
\lf[\fint_{2^jB}\lf|g(x)-P_{B}^{(s)}(g)(x)\r|^{q'}\,dx\r]^{\frac{1}{q'}}\noz\\
&\ls \sum_{j\in\zz_+}2^{-j(\tau-\frac{n}{q'})}
\frac{|B|}{\|\mathbf{1}_{B}\|_{X}}
\lf[\fint_{2^jB}\lf|g(x)-P_{2^jB}^{(s)}(g)(x)\r|
^{q'}\,dx\r]^{\frac{1}{q'}}\noz\\
&\ls \sum_{j\in\zz_+}2^{-j(\tau+\frac{n}{q}-\frac{n}{r})}
\frac{|2^jB|}{\|\mathbf{1}_{2^jB}\|_{X}}
\lf[\fint_{2^jB}\lf|g(x)-P_{2^jB}^{(s)}(g)(x)\r|
^{q'}\,dx\r]^{\frac{1}{q'}}\noz\\
&\ls \sum_{j\in\zz_+}2^{-j(\tau+\frac{n}{q}-\frac{n}{r})}
\|g\|_{\mathcal{L}_{X,q',s,d}(\rn)}
\lesssim \|g\|_{\mathcal{L}_{X,q',s,d}(\rn)}.\noz
\end{align}	
Using this, $\mathrm{A}_1<\fz$, and \eqref{Mg-01},
we find that $Mg$ is integrable, which, together with
the vanishing moments of $M$ and \eqref{Mg-02}, further implies that
\begin{align*}
\lf|\int_{\rn}M(x)g(x)\,dx\r|
=\lf|\int_{\rn}M(x)
\lf[g(x)-P_{B}^{(s)}(g)(x)\r]\,dx\r|
\leq\mathrm{A}_2\ls \|g\|_{\mathcal{L}_{X,q',s,d}(\rn)}.
\end{align*}
This finishes the proof of \eqref{Mg-03}, and hence of Lemma \ref{lem-mol-01}.
\end{proof}

The following lemma is well known (see, for instance, \cite[p.\,54, Lemma 4.1 and p.\,86]{Lu} for its proof),
which plays a key role in the proof of Proposition \ref{HK-mole} below.

\begin{lemma}\label{lem-4.13x}
Let $s\in\zz_+$. Then
there exists a positive constant $C$ such that,
for any $f\in L^1_{\mathrm{loc}}(\rn)$, any ball $B\in\mathbb{B}(\rn)$,
and any $x\in 2B\setminus B$,
\begin{align}\label{pq2}
\lf|P_{2B\setminus B}^{(s)}(f)(x)\r|
\leq C\fint_{2B\setminus B}|f(x)|\,dx,
\end{align}	
where $P_{2B\setminus B}^{(s)}(f)$ denotes the
unique polynomial $P\in \mathcal{P}_s(\rn)$ such that,
for any $\gamma\in\zz_+^n$ with $|\gamma|\leq s$,
\begin{align*}
\int_{2B\setminus B}\lf[f(x)
-P_{2B\setminus B}^{(s)}(f)(x)\r]x^{\gamma}\,dx=0
\end{align*}
and $f_{2B\setminus B}$ is as in \eqref{fe} with $E$ replaced by $2B\setminus B$.
\end{lemma}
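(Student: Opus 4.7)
The plan is to prove Lemma \ref{lem-4.13x} by reducing to a fixed reference annulus via affine rescaling and then exploiting the fact that $\mathcal{P}_s(\rn)$ is finite-dimensional.

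First, I would reduce to the unit annulus. For $B := B(x_0, r) \in \mathbb{B}(\rn)$, the change of variables $x = x_0 + ry$ is a diffeomorphism of $A := B(\mathbf{0}, 2)\setminus B(\mathbf{0}, 1)$ onto $2B \setminus B$ that preserves $\mathcal{P}_s(\rn)$ via the correspondence $\widetilde{P}(y) := P(x_0 + ry)$. The defining orthogonality condition of $P_{2B\setminus B}^{(s)}(f)$ transforms covariantly under this change (the Jacobian $r^n$ cancels) into the analogous condition on $A$ for $\widetilde{f}(y) := f(x_0 + ry)$, so that $P_{2B\setminus B}^{(s)}(f)(x_0 + r\,\cdot) = P_A^{(s)}(\widetilde{f})(\cdot)$. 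Since also $\fint_{2B\setminus B}|f(x)|\,dx = \fint_A |\widetilde{f}(y)|\,dy$, the claim reduces to showing
$$\lf\|P_A^{(s)}(\widetilde{f})\r\|_{L^\infty(A)} \le C_s \fint_A \lf|\widetilde{f}(y)\r|\,dy$$
with a constant $C_s$ depending only on $n$ and $s$.

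Second, I would use a dual basis argument on $\mathcal{P}_s(\rn)$. Fix a basis $\{e_\gamma\}_{|\gamma|\le s}$ of $\mathcal{P}_s(\rn)$ and note that $\langle P, Q\rangle_A := \int_A P(y)Q(y)\,dy$ is an inner product on this space, since any polynomial vanishing on the open set $A$ must be identically zero. Let $\{\widetilde{e}_\gamma\}_{|\gamma|\le s}$ be the dual basis, so that $\langle e_\gamma, \widetilde{e}_{\gamma'}\rangle_A = \delta_{\gamma,\gamma'}$. The defining property $\int_A (\widetilde{f} - P_A^{(s)}(\widetilde{f}))Q\,dy = 0$ for all $Q\in\mathcal{P}_s(\rn)$ then yields the explicit representation
$$P_A^{(s)}(\widetilde{f})(y) = \sum_{|\gamma|\le s}\lf(\int_A \widetilde{f}(z)\widetilde{e}_\gamma(z)\,dz\r)e_\gamma(y).$$

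Third, I would take absolute values and invoke the fact that each of $\|e_\gamma\|_{L^\infty(A)}$ and $\|\widetilde{e}_\gamma\|_{L^\infty(A)}$ is a constant depending only on $n$ and $s$, to obtain
$$\lf\|P_A^{(s)}(\widetilde{f})\r\|_{L^\infty(A)} \le C_s\int_A \lf|\widetilde{f}(z)\r|\,dz \sim \fint_A\lf|\widetilde{f}(z)\r|\,dz.$$
Rescaling back via $x = x_0 + ry$ then produces the desired bound uniformly in $B$. The proof is essentially routine; the one point requiring care is the verification that the polynomial projection commutes with the affine change of variables, which follows by inspecting how the vanishing moment conditions transform. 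The uniformity of $C_s$ in $B$ is automatic because the dual basis is computed once and for all on the fixed annulus $A$, so I do not anticipate a substantive obstacle.
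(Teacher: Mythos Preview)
Your argument is correct and is precisely the standard proof: the paper does not supply its own proof of this lemma but simply cites \cite[p.\,54, Lemma 4.1 and p.\,86]{Lu}, and the argument there (and in \cite{tw80}) proceeds exactly as you do---reduce by dilation and translation to a fixed annulus, then use that the $L^2$-projection onto the finite-dimensional space $\mathcal{P}_s(\rn)$ is given by pairing against a fixed dual (or orthonormal) basis whose $L^\infty$-norms on the reference annulus are absolute constants. Your verification that the minimizing-polynomial condition commutes with the affine change of variables is the only point requiring a word, and you have handled it correctly by observing that $\{(x_0+ry)^\gamma:|\gamma|\le s\}$ still spans $\mathcal{P}_s(\rn)$.
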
	

We also need the following lemma.

\begin{lemma}\label{l4.6x}
Let $X$ be a ball quasi-Banach
function space on $\rn$, $p\in(0,\fz)$, and
$\theta_p:=\frac{\log 2}{\log (2\sigma_p)}$, where
\begin{align*}
\sigma_p:=\inf\lf\{C\in[1,\fz):\ \|f+g\|_{X^p}\leq C[\|f\|_{X^p}+\|g\|_{X^p}],\
\forall\,f,g\in X^p\r\}.
\end{align*}
Then, for any $b\in(0,\theta_p)$ and $\{f_j\}_{j\in\nn}\subset \mathscr{M}(\rn)$,
\begin{align}\label{rem-key-02}
\lf\|\sum_{j\in\nn}|f_j|\r\|_{X^p}
\leq 4^{\frac{1}{\theta_p}}\lf[\sum_{j\in\nn}\lf\|f_j\r\|_{X^p}^{b}\r]^{\frac{1}{b}}.
\end{align}
\end{lemma}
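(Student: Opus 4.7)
The plan is to prove a finite-sum version of \eqref{rem-key-02} via the Aoki--Rolewicz theorem applied to $X^p$, and then pass to the infinite sum by monotone convergence. First, reduce to finite sums: each $|f_j|$ is non-negative, so $\sum_{j=1}^N|f_j|$ increases pointwise to $\sum_{j\in\nn}|f_j|$ as $N\to\fz$, and applying property (iii) of Definition \ref{Debqfs} to the ball quasi-Banach function space $X^p$ (which is again a ball quasi-Banach function space by the remark following Definition \ref{Debf}) gives
\begin{align*}
\lf\|\sum_{j\in\nn}|f_j|\r\|_{X^p}=\lim_{N\to\fz}\lf\|\sum_{j=1}^N|f_j|\r\|_{X^p}.
\end{align*}
Hence it suffices to prove \eqref{rem-key-02} with $\sum_{j\in\nn}$ replaced by $\sum_{j=1}^N$ and with the constant independent of $N$.

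Second, apply the classical Aoki--Rolewicz theorem to the quasi-Banach function space $X^p$. Its quasi-norm $\|\cdot\|_{X^p}$ satisfies the quasi-triangle inequality with constant $\sigma_p$ by definition, and the exponent $\theta_p=\log 2/\log(2\sigma_p)$ is chosen precisely so that $(2\sigma_p)^{\theta_p}=2$. Aoki--Rolewicz then produces an equivalent $\theta_p$-subadditive functional $\|\cdot\|^*$ on $X^p$ satisfying $\|x\|^*\le\|x\|_{X^p}\le 4^{1/\theta_p}\|x\|^*$; combining its $\theta_p$-subadditivity with this two-sided equivalence yields the finite-sum bound
\begin{align*}
\lf\|\sum_{j=1}^N|f_j|\r\|_{X^p}\le 4^{1/\theta_p}\lf(\sum_{j=1}^N\|f_j\|_{X^p}^{\theta_p}\r)^{1/\theta_p}.
\end{align*}
The Aoki--Rolewicz inequality itself is proved by induction on $N$: after sorting $\|f_j\|_{X^p}$ decreasingly and setting $S:=\sum_{j=1}^N\|f_j\|_{X^p}^{\theta_p}$, either a single term dominates the $\theta_p$-mass (peel off $f_1$ and apply the inductive hypothesis to the remainder) or one splits the indices into two near-balanced halves (apply the inductive hypothesis to each half and the quasi-triangle inequality to combine); the identity $(2\sigma_p)^{\theta_p}=2$ forces the induction to close with the constant $4$ in the $\theta_p$-th power.

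Third, pass from the exponent $\theta_p$ to $b\in(0,\theta_p)$. Since $\sigma_p\ge 1$ forces $\theta_p\in(0,1]$, we have $b<\theta_p\le 1$, and the standard monotonicity of $\ell^r$-quasi-norms in $r\in(0,\fz)$ for non-negative sequences gives
\begin{align*}
\lf(\sum_{j=1}^N\|f_j\|_{X^p}^{\theta_p}\r)^{1/\theta_p}\le\lf(\sum_{j=1}^N\|f_j\|_{X^p}^b\r)^{1/b}.
\end{align*}
Combining with the second step and letting $N\to\fz$ via the first step completes the proof. The main obstacle is the inductive proof of the Aoki--Rolewicz inequality in the second step, where the exact constant $4^{1/\theta_p}$ must be tracked through the two-case split; the coupling $(2\sigma_p)^{\theta_p}=2$ of $\sigma_p$ and $\theta_p$ is precisely what renders the induction $N$-independent and any looser calibration would blow up logarithmically in $N$.
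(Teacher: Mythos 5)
Your proposal is correct and follows essentially the same route as the paper: the paper likewise combines the Aoki--Rolewicz theorem (cited rather than reproved) with Definition \ref{Debqfs}(iii) to obtain the infinite-sum bound with exponent $\theta_p$ and constant $4^{1/\theta_p}$, and then invokes the monotonicity of $\ell^r$-quasi-norms to pass to $b\in(0,\theta_p)$. The only difference is that you sketch the inductive proof of Aoki--Rolewicz, which the paper delegates to references.
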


\begin{proof}
Let all the symbols be the same as in the present lemma.
From both the Aoki--Rolewicz theorem
(see \cite{ao,Ro1957}, \cite[Proposition 2.14]{syy}, or \cite[Exercise 1.4.6]{G1}
with a detailed hint on its proof) and Definition \ref{Debqfs}(iii), we deduce that,
for any $\{f_j\}_{j\in\nn}\subset \mathscr{M}(\rn)$,
\begin{align}\label{rem-key-01}
\lf\|\sum_{j\in\nn}|f_j|\r\|_{X^p}
\leq 4^{\frac{1}{\theta_p}}\lf[\sum_{j\in\nn}\lf\|f_j\r\|_{X^p}^{\theta_p}\r]^{\frac{1}{\theta_p}}.
\end{align}
Moreover,
using both \eqref{rem-key-01} and \cite[p.\,7, Proposition 1.5]{ss2011}, we find that,
for any $\{f_j\}_{j\in\nn}\subset \mathscr{M}(\rn)$ and $b\in(0,\theta_p)$,
\eqref{rem-key-02} holds true, which
completes the proof of Lemma \ref{l4.6x}.
\end{proof}

The following proposition gives a special atomic decomposition
for any $(X,q,s,\tau)$-molecule, which plays a vital role in the proof of Theorem \ref{thm-main-II}.

\begin{proposition}\label{HK-mole}
Let $X$, $p_-$, $q$, $s$, and $d$ be the same as in Definition \ref{finatom}, and
\begin{align}\label{Mole-01}
\tau\in \lf(n\lf[\frac{1}{p_-}-\frac{1}{q}\r],\fz\r)
\cap\lf(\frac{n}{q'}+s,\fz\r),
\end{align}
where $\frac{1}{q}+\frac{1}{q'}=1$.
Then, for any $(X,q,s,\tau)$-molecule $M$ centered at the ball $B\in\mathbb{B}(\rn)$,
there exists a sequence $\{c_j\}_{j\in\zz_+}\subset\cc$
and a sequence $\{a_{j}\}_{j\in\zz_+}$ of $(X,q,s)$-atoms supported, respectively,
in $\{2^j B\}_{j\in\zz_+}$ such that
\begin{align}\label{Mac-01}
M=\sum_{j\in\zz_+}c_ja_j
\end{align}
both in $H_X(\rn)$ and pointwisely on $\rn$.
\end{proposition}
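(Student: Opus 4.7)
The plan is the classical ``molecule-to-atom'' reduction via moment-correcting polynomials combined with a telescoping trick. Set $L_0 := B$ and $L_j := 2^j B \setminus 2^{j-1} B$ for $j \in \nn$, so $M = \sum_{j\in\zz_+} M\,\mathbf{1}_{L_j}$ pointwise on $\rn$. For each $j\in\zz_+$ I would introduce a polynomial $P_j \in \mathcal{P}_s(\rn)$ uniquely determined by the prescribed moment conditions
\begin{align*}
\int_{L_j} P_j(x)\,x^\gamma\,dx = \int_{2^j B} M(x)\,x^\gamma\,dx, \qquad |\gamma|\le s,
\end{align*}
with $P_{-1}:=0$ (existence and uniqueness are immediate since pairing against $\{x^\gamma\}_{|\gamma|\le s}$ via integration on $L_j$ is a nondegenerate form on the finite-dimensional space $\mathcal{P}_s(\rn)$), and then define
\begin{align*}
a_j := M\,\mathbf{1}_{L_j} + P_{j-1}\,\mathbf{1}_{L_{j-1}} - P_j\,\mathbf{1}_{L_j} \qquad (j\in\zz_+).
\end{align*}
Direct verifications give: $a_j$ is supported in $2^j B$; $\int_{\rn} a_j(x)\,x^\gamma\,dx = 0$ for $|\gamma|\le s$ (from the prescribed moments of $P_j$ and $P_{j-1}$); and the partial sums telescope to $\sum_{j=0}^{N} a_j = M\,\mathbf{1}_{2^N B} - P_N\,\mathbf{1}_{L_N}$, which converges pointwise to $M$ (since at any fixed $x$ the correction $P_N\,\mathbf{1}_{L_N}(x)$ vanishes for $N$ large enough).

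Next, I would estimate $\|a_j\|_{L^q(\rn)}$. The term $\|M\,\mathbf{1}_{L_j}\|_{L^q(\rn)}$ is immediately controlled by the molecule condition \eqref{mole-01}. For the polynomial terms, the vanishing moments of $M$ on $\rn$ allow me to rewrite
\begin{align*}
\int_{2^j B} M(x)\,x^\gamma\,dx = -\int_{\rn\setminus 2^j B} M(x)\,x^\gamma\,dx,
\end{align*}
which I split into annuli $\{L_k\}_{k>j}$ and estimate via H\"older and \eqref{mole-01}; the hypothesis $\tau>n/q'+s\ge n/q'+|\gamma|$ from \eqref{Mole-01} ensures the resulting geometric series sums to a multiple of the leading term. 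Combined with the scale-invariant dual-basis expansion of polynomials on annuli (whose coefficient-to-sup-norm bound is an instance of Lemma \ref{lem-4.13x} after rescaling $L_j$ to a fixed reference annulus), this yields $\|P_j\,\mathbf{1}_{L_j}\|_{L^q(\rn)} \ls 2^{-j\tau}\,|B|^{1/q}/\|\mathbf{1}_B\|_X$, whence $\|a_j\|_{L^q(\rn)} \ls 2^{-j\tau}\,|B|^{1/q}/\|\mathbf{1}_B\|_X$. Renormalizing via
\begin{align*}
c_j := C\cdot 2^{-j\tau}\,\frac{|B|^{1/q}\,\|\mathbf{1}_{2^j B}\|_X}{\|\mathbf{1}_B\|_X\,|2^j B|^{1/q}}, \qquad \wt a_j := a_j/c_j,
\end{align*}
for a suitable absolute constant $C$ makes each $\wt a_j$ an $(X,q,s)$-atom supported in $2^j B$; by Remark \ref{main-remark}, $\|\mathbf{1}_{2^j B}\|_X/\|\mathbf{1}_B\|_X \ls 2^{jn/r}$ for any $r\in(0,p_-)$, so $|c_j|\ls 2^{-j(\tau+n/q-n/r)}$ decays geometrically, thanks to the other half of \eqref{Mole-01}, namely $\tau > n(1/p_--1/q)$.

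Pointwise convergence of $\sum_j c_j\,\wt a_j = \sum_j a_j$ to $M$ on $\rn$ is already built into the telescoping identity. For convergence in $H_X(\rn)$ I would bound the atomic quasi-norm $\|\{\sum_j(|c_j|/\|\mathbf{1}_{2^j B}\|_X)^d\,\mathbf{1}_{2^j B}\}^{1/d}\|_X$ by applying Lemma \ref{l4.6x} to the convexification $X^{1/d}$; the resulting bound is a multiple of $(\sum_j |c_j|^{bd})^{1/(bd)}$ for some small $b>0$, and this is finite by the geometric decay of $|c_j|$ obtained above. The main obstacle I anticipate is engineering $\{P_j\}$ so that support, vanishing moments, and the telescoping identity hold simultaneously while still delivering the sharp bound $\|P_j\|_{L^\infty(L_j)}\ls 2^{-j(\tau+n/q)}/\|\mathbf{1}_B\|_X$; this requires exploiting the \emph{full} vanishing moment hypothesis of $M$ on $\rn$ to link the ``global'' moments $\int_{2^j B} M\,x^\gamma$ to the ``local'' annular moments $\int_{L_j} P_j\,x^\gamma$, and then carefully implementing the dual-basis interpretation of Lemma \ref{lem-4.13x} on rescaled annuli to produce the $2^{-j\tau}$ decay.
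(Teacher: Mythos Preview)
Your plan is correct and rests on the same ingredients as the paper's proof---the dual-basis expansion on annuli (the machinery behind Lemma~\ref{lem-4.13x} and the bound \eqref{L-7.2}), the use of the global vanishing moments of $M$ to rewrite $\int_{2^jB}M\,x^\gamma=-\int_{\rn\setminus 2^jB}M\,x^\gamma$ and sum the resulting geometric series (exactly \eqref{eta-1}), and the tail $H_X$-convergence via Lemma~\ref{l4.6x}. The difference is purely organizational: the paper first subtracts the \emph{local} projection $P_{L_j}^{(s)}(M)$ to form one family $\alpha_j:=(M-P_{L_j}^{(s)}(M))\mathbf{1}_{L_j}$, and then has to run a separate telescoping argument on the leftover polynomial pieces $\sum_j P_{L_j}^{(s)}(M)\mathbf{1}_{L_j}$, producing a second family $\{\widetilde{\alpha}_\nu^{(j)}\}_{|\nu|\le s}$; you instead choose $P_j$ so that $\int_{L_j}P_j\,x^\gamma=\int_{2^jB}M\,x^\gamma$ from the start, which builds the telescoping directly into a single family $\{a_j\}$. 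In fact your $a_j$ is precisely the paper's $\alpha_j+\sum_{|\nu|\le s}\widetilde{\alpha}_\nu^{(j-1)}$ (check: $P_{L_j}^{(s)}(M)\mathbf{1}_{L_j}=\sum_\nu\frac{\eta_\nu^{(j-1)}-\eta_\nu^{(j)}}{|L_j|}\psi_\nu^{(j)}\mathbf{1}_{L_j}$ while your $P_j\mathbf{1}_{L_j}=-\sum_\nu\frac{\eta_\nu^{(j)}}{|L_j|}\psi_\nu^{(j)}\mathbf{1}_{L_j}$), so the two decompositions coincide after regrouping. Your packaging is cleaner for this proposition; the paper's two-family version has the minor advantage that each piece is more transparently identified with a specific mechanism (local cancellation vs.\ global moment correction). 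One small point you should make explicit: the dual-basis sup-norm bound $|\psi_\nu^{(j)}|\ls(2^jr)^{-|\nu|}$ on $L_j$ requires the annulus to be centered at the origin (or one works with $(x-z)^\nu$), so either reduce to $z=\mathbf{0}$ by translation as the paper does, or carry the shift through the rescaling you mention.
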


\begin{proof}
We prove the present proposition
via borrowing some ideas
from the proof of \cite
[Thoerem 2.9]{tw80} (see also the proof of \cite[Proposition 3.15]{jtyyz3}).
Let all the symbols be the same as in the present proposition.
Let $M$ be an $(X,q,s,\tau)$-molecule
centered at the ball $B(z,r)$ with $z\in\rn$ and $r\in(0,\fz)$.
Without loss of generality, we may assume that $z:=\mathbf{0}$
and $M$ is a real-valued function.
Indeed, if $M$ is complex-valued, then
we only need to consider the real part and the imaginary part
of $M$, respectively.
Moreover, assume that \eqref{Mac-01} holds true for $B:=B(\mathbf{0},r)$.
Since $M(\cdot+z)$ is an $(X,q,s,\tau)$-molecule
centered at the ball $B(\mathbf{0},r)$, then, by the assumption that
\eqref{Mac-01} holds true for $B(\mathbf{0},r)$, we find that
there exists a sequence $\{c_j\}_{j\in\zz_+}\subset\cc$
and a sequence $\{a_{j}\}_{j\in\zz_+}$ of $(X,q,s)$-atoms supported, respectively,
in $\{2^jB(\mathbf{0},r)\}_{j\in\zz_+}$ such that
\begin{align}\label{sjtx}
M(\cdot+z)=\sum_{j\in\zz_+}c_ja_j(\cdot)
\end{align}
both in $H_X(\rn)$ and pointwisely on $\rn$.
Now, we claim that
\begin{align}\label{jgsj}
M(\cdot)=\sum_{j\in\zz_+}c_ja_j(\cdot-z)
\end{align}
both in $H_X(\rn)$ and pointwisely on $\rn$.
It is easy to prove that \eqref{jgsj} holds true pointwisely on $\rn$.
Then we show that \eqref{jgsj} also holds true in $H_X(\rn)$. To this end,
let $\psi\in\mathcal{S}(\rn)$ with $\int_{\rn}\psi(x)\,dx=1$, and
$\wz\psi(\cdot):=\psi(\cdot+z)$.
From \cite[Theorem 3.1]{SHYY} and \eqref{sjtx}, we infer that, for any $N\in\nn$,
\begin{align*}
&\lf\|M(\cdot)-\sum_{j=1}^{N}c_ja_j(\cdot-z)\r\|_{H_X(\rn)}\\
&\quad\sim\lf\|\sup_{t\in(0,\infty)}\lf|\wz\psi_t*
\lf[M(\cdot)-\sum_{j=1}^{N}c_ja_j(\cdot-z)\r]\r|\,\r\|_X\\
&\quad\sim\lf\|\sup_{t\in(0,\infty)}\lf|\psi_t*
\lf[M(\cdot+z)-\sum_{j=1}^{N}c_ja_j(\cdot)\r]\r|\,\r\|_X
\sim\lf\|M(\cdot+z)-\sum_{j=1}^{N}c_ja_j(\cdot)\r\|_{H_X(\rn)}\\
&\quad\to0
\end{align*}
as $N\to\infty$, where the implicit positive constants are independent of
both $M$ and $N$, but depend on both $\psi$ and $z$.
This implies that the above claim holds true.
In addition, for any $j\in\zz_+$, let
$$
\wz c_j:=\frac{c_j\|\mathbf{1}_{2^jB(z,r)}\|_X}
{\|\mathbf{1}_{2^jB(\mathbf{0},r)}\|_X}\quad\text{and}\quad
\wz a_j(\cdot):=\frac{a_j(\cdot-z)\|\mathbf{1}_{2^jB(\mathbf{0},r)}\|_X}
{\|\mathbf{1}_{2^jB(z,r)}\|_X}.
$$
Then $\{\wz a_j\}_{j\in\zz_+}$ is a sequence of $(X,q,s)$-atoms supported, respectively,
in $\{B(z,r)\}_{j\in\zz_+}$ and
$$
M=\sum_{j\in\zz_+}\wz c_j\wz a_j
$$
pointwisely on $\rn$, that is,
the desired conclusion of the present proposition holds true.
Thus, the above claim holds true and, in the remainder of the resent proof, we assume that $z:=\mathbf{0}$
and $M$ is a real-valued function.

To show the present proposition, let $L_0:=B(\mathbf{0},r)$ and,
for any $j\in\nn$, let $L_j:=2^{j}B(\mathbf{0},r)\setminus 2^{j-1}B(\mathbf{0},r)$.
Moreover, for any $j\in\zz_+$ and $x\in \rn$, we use
$P_{L_j}^{(s)}(M)$ to denote the
unique polynomial of $\mathcal{P}_s(\rn)$
such that, for any $\gamma\in\zz_+^n$ with
$|\gamma|\leq s$, and for any $j\in\zz_+$,
\begin{align}\label{HK-M-1}
\int_{L_j}\lf[M(x)-P_{L_j}^{(s)}(M)(x)\r]x^{\gamma}\,dx=0.
\end{align}
We first present some well-known
facts on $P_j:=P_{L_j}^{(s)}(M)$ with $j\in\zz_+$
(see, for instance, \cite[pp.\,76-77]{tw80} and also \cite[pp.\,85-88]{Lu}).
For any given $j\in\zz_+$,
let $\{\phi_\nu^{(j)}:\ \nu\in\zz_+^n\text{ and }|\nu|\leq s\}$
be the \emph{orthogonal polynomials} with weight $\frac{1}{|L_j|}$
by means of the Gram--Schmidt method from
$\{x^\nu:\ \nu\in\zz_+^n\text{ and }|\nu|\leq s\}$
restricted on $L_j$, namely,
$\{\phi_\nu^{(j)}:\ \nu\in\zz_+^n\text{ and }
|\nu|\leq s\}\subset \mathcal{P}_s(\rn)$
and, for any $\nu_1,\nu_2\in\zz_+^n$ with
$|\nu_1|\leq s$ and $|\nu_2|\leq s$,
\begin{align*}
\lf\langle \phi_{\nu_1}^{(j)}, \phi_{\nu_2}^{(j)}\r\rangle_{L_j}
:=\frac{1}{|L_j|}\int_{L_j}
\phi_{\nu_1}^{(j)}(x)\phi_{\nu_2}^{(j)}(x)\,dx
=\begin{cases}
\displaystyle
1,\ & \nu_1=\nu_2,\\
\displaystyle
0,\ & \nu_1\neq \nu_2.
\end{cases}
\end{align*}
It is easy to prove that, for any $x\in L_j$ with $j\in\zz_+$,
\begin{align}\label{P-j}
P_j(x)=\sum_{\{\nu\in\zz_+^n:\ |\nu|\leq s\}}
\lf\langle M,\phi_{\nu}^{(j)}
\r\rangle_{L_j}\phi_\nu^{(j)}(x).
\end{align}
Moreover, for any given $j\in\zz_+$, we use
$\{\psi_{\nu}^{(j)}:\ \nu\in\zz_+^n\text{ and }|\nu|\leq s\}$
to denote the \emph{dual basis} of
$\{x^{\nu}:\ \nu\in\zz_+^n\text{ and }|\nu|\leq s\}$
restricted on $L_j$ with respect to the weight $\frac{1}{|L_j|}$, namely,
$\psi_{\nu}^{(j)}\in \mathcal{P}_s(\rn)$ and,
for any $\nu_1,\nu_2\in\zz_+^n$ with $|\nu_1|\leq s$ and $|\nu_2|\leq s$,
\begin{align}\label{4.10x}
\lf\langle \psi_{\nu_1}^{(j)}, x^{\nu_2}\r\rangle_{L_j}
:=\frac{1}{|L_j|}\int_{L_j}\psi_{\nu_1}^{(j)}(x)x^{\nu_2}\,dx
=\begin{cases}
\displaystyle
1,\ & \nu_1=\nu_2,\\
\displaystyle
0,\ & \nu_1\neq \nu_2.
\end{cases}
\end{align}
By the definition of the dual basis, we conclude that,
if, for any $\nu\in\zz_+^n$ with $|\nu|\leq s$,
$j\in\zz_+$, and $x\in L_j$, $\phi_{\nu}^{(j)}(x)
=\sum_{\{\gamma\in\zz_+^n:\ |\gamma|\leq s\}}
m_{\nu,\gamma}^{(j)}x^{\gamma}$
with $\{m_{\nu,\gamma}^{(j)}\}\subset \rr$, then
$$
\psi_{\nu}^{(j)}=\sum_{\{\gamma\in\zz_+^n:\ |\gamma|\leq s\}}
m_{\gamma,\nu}^{(j)}\phi_{\gamma}^{(j)},
$$
which, together with \eqref{P-j}, further implies that, for any $j\in\zz_+$,
\begin{align}\label{3.8x}
P_j\mathbf{1}_{L_j}=\sum_{\{\nu\in\zz_+^n:\ |\nu|\leq s\}}
\lf\langle M,x^{\nu}\r\rangle_{L_j}\psi_{\nu}^{(j)}\mathbf{1}_{L_j}.
\end{align}

To prove the present proposition,
we first estimate $M\mathbf{1}_{2^lB(\mathbf{0},r)}$
for any $l\in\nn$. Indeed, for any $l\in\nn$, we have
\begin{align}\label{M-l'}
M\mathbf{1}_{2^lB(\mathbf{0},r)}
&=\sum_{j=1}^l \lf[M\mathbf{1}_{L_j}-P_j\mathbf{1}_{L_j}\r]
+\sum_{j=0}^{l}P_j\mathbf{1}_{L_j}\\
&=\sum_{j=1}^l \alpha_j
+\sum_{j=0}^{l}P_j\mathbf{1}_{L_j},\noz
\end{align}
where $\alpha_j:=(M-P_j)\mathbf{1}_{L_j}$.

Next, we show that, for any $j\in\{1,\ldots,l\}$ with $l\in\nn$, $\alpha_j$
is an $(X,q,s)$-atom multiplied by a positive constant.
Indeed, from \eqref{HK-M-1}, we deduce that, for any $j\in\zz_+$
and $\gamma\in\zz_+^n$ with $|\gamma|\leq s$,
\begin{align}\label{3.10xx}
\int_{\rn}\alpha_j(x)x^\gamma\,dx=0
\ \text{and}\ \supp\,(\alpha_j)\subset L_j\subset2^jB(\mathbf{0},r).
\end{align}
Thus, for any $j\in\zz_+$,
$\alpha_j$ satisfies both (i) and (iii) of Definition \ref{atom}.
Then we prove that, for any $j\in\zz_+$, $\alpha_j$ multiplied by a positive constant
satisfies Definition \ref{atom}(ii). Indeed, in what follows,
we choose a $u\in(\frac{nq}{\tau q+n},p_-)$ due to \eqref{Mole-01}.
Then, using \eqref{pq2}, the H\"older inequality,
Definition \ref{def-mol}(i), and \eqref{key-c},
we conclude that, for any $j\in\zz_+$,
\begin{align*}
\|\alpha_j\|_{L^q(\rn)}
&\leq \lf\|M\mathbf{1}_{L_j}\r\|_{L^q(\rn)}
+\lf[\int_{L_j}|P_j(x)|^q\,dx\r]^{\frac{1}{q}}\\
&\leq \lf\|M\mathbf{1}_{L_j}\r\|_{L^q(\rn)}
+|L_j|^{\frac{1}{q}}\fint_{L_j}|M(x)|\,dx\\
&\ls\lf\|M\mathbf{1}_{L_j}\r\|_{L^q(\rn)}
\ls2^{-j\tau}\frac{|B(\mathbf{0},r)|^{\frac{1}{q}}}
{\|\mathbf{1}_{B(\mathbf{0},r)}\|_{X}}\\
&\ls2^{-j(\tau+\frac{n}{q}-\frac{n}{u})}\frac{|2^jB(\mathbf{0},r)|^{\frac{1}{q}}}
{\|\mathbf{1}_{2^jB(\mathbf{0},r)}\|_{X}},
\end{align*}
which, combined with \eqref{3.10xx}, further implies that
there exists a positive constant $C_1$, independent of $j$, such that, for any $j\in\zz_+$,
$A_j:=\frac{\alpha_j}{\lambda_j}$ is an $(X,q,s)$-atom, where
\begin{align}\label{lam-j}
\lambda_j:=C_12^{-j(\tau+\frac{n}{q}-\frac{n}{u})}.
\end{align}
This is a desired conclusion for any $\alpha_j$ with $j\in\{1,\ldots,l\}$ and $l\in\nn$.

We now consider
$\sum_{j=0}^{l}P_j\mathbf{1}_{L_j}$ for any $l\in\nn$. To this end, for any
$\nu\in\zz_+^n$ with $|\nu|\leq s$, and for any $j\in\zz_+$, let
\begin{align}\label{4.12x}
\eta^{(j)}_{\nu}:=\int_{\rn\setminus 2^jB(\mathbf{0},r)}M(x)x^{\nu}
\,dx\ \text{and}\ \eta_{\nu}^{(-1)}
:=\int_{\rn}M(x)x^{\nu}\,dx=0.
\end{align}
Then, by \eqref{3.8x},
similarly to the proof of \cite[(3.16)]{jtyyz3}, we have, for any $l\in\nn$,
\begin{align}\label{sum-Pj}
\sum_{j=0}^{l}P_j\mathbf{1}_{L_j}
=\sum_{\{\nu\in\zz_+^n:\ |\nu|\leq s\}}
\sum_{j=0}^{l-1}\widetilde{\alpha}_\nu^{(j)}
-\sum_{\{\nu\in\zz_+^n:\ |\nu|\leq s\}}
\frac{\eta_{\nu}^{(l)}\psi_{\nu}^{(l)}\mathbf{1}_{L_l}}{|L_l|},
\end{align}
where, for any $\nu\in\zz_+^n$ with $|\nu|\leq s$, and for any $j\in\{0,\ldots,l-1\}$,
\begin{align}\label{4.32x}
\widetilde{\alpha}_\nu^{(j)}
:=\eta_{\nu}^{(j)}\lf[\frac{\psi_{\nu}^{(j+1)}
\mathbf{1}_{L_{j+1}}}{|L_{j+1}|}
-\frac{\psi_{\nu}^{(j)}\mathbf{1}_{L_j}}{|L_j|}\r].
\end{align}

Now, we claim that, for any $\nu\in\zz_+^n$
with $|\nu|\leq s$, and for any $j\in\zz_+$, $\widetilde{\alpha}_\nu^{(j)}$
is an $(X,q,s)$-atom multiplied by a positive constant
and, for any $\phi\in \mathcal{S}(\rn)$,
\begin{align}\label{eta-3'}
\lim_{l\to\fz}\int_{\rn}\frac{\eta_{\nu}^{(l)}\psi_{\nu}^{(l)}(x)
\mathbf{1}_{L_l}(x)}{|L_l|}\phi(x)\,dx
=0.
\end{align}
Indeed, using \eqref{4.10x}, we conclude that,
for any $\nu,\gamma\in\zz_+^n$ with $|\nu|\leq s$
and $|\gamma|\leq s$, and for any $j\in\zz_+$,
\begin{align}\label{4.14x}
\int_{\rn}\widetilde{\alpha}_\nu^{(j)}(x)x^{\gamma}\,dx=0\ \text{and}\
\supp\,(\widetilde{\alpha}_{\nu}^{(j)})\subset (L_{j+1}\cup L_j)\subset 2^{j+1}B(\mathbf{0},r).
\end{align}
Moreover, by \cite[Lemma 3.16]{tw80} (see also \cite[(7.2)]{Lu}),
we find that there exists a positive constant
$C_2$, independent of $\nu$, $j$, and $r$, such that,
for any $\nu\in\zz_+^n$ with $|\nu|\leq s$,
and for any $x\in L_j$ with $j\in\zz_+$,
\begin{align}\label{L-7.2}
\lf|\psi_{\nu}^{(j)}(x)\r|\leq \frac{C_2}{(2^{j-1}r)^{|\nu|}}.
\end{align}
By this, \eqref{4.12x}, the H\"older inequality,
Definition \ref{def-mol}(i), $\tau>\frac{n}{q'}+s$, and \eqref{key-c},
we conclude that there exists a $u\in(0,p_-)$ such that, for any
$\nu\in \zz_+^n$ with $|\nu|\leq s$, and for any $j\in\zz_+$,
\begin{align}\label{eta-1}
\lf|\eta_{\nu}^{(j)}\r|
&\leq\int_{\rn\setminus 2^jB(\mathbf{0},r)}\lf|M(x)x^{\nu}\r|\,dx
=\sum_{i=j+1}^{\fz}\int_{L_i}\lf|M(x)x^{\nu}\r|\,dx\\
&\le \sum_{i=j+1}^{\fz}\lf\|M\mathbf{1}_{L_i}\r\|_{L^q(\rn)}
|L_i|^{\frac{1}{q'}}\lf(2^{i}r\r)^{|\nu|}
\ls\sum_{i=j+1}^{\fz}2^{-i\tau}
\frac{|B(\mathbf{0},r)|^{\frac{1}{q}}}{\|\mathbf{1}_{B(\mathbf{0},r)}\|_X}
\lf|2^iB(\mathbf{0},r)\r|^{\frac{1}{q'}+\frac{|\nu|}{n}}\noz\\
&\sim\sum_{i=j+1}^{\fz}2^{-i(\tau-\frac{n}{q'}-|\nu|)}
\frac{|B(\mathbf{0},r)|^{1+\frac{|\nu|}{n}}}{\|\mathbf{1}_{B(\mathbf{0},r)}\|_X}
\sim2^{-j(\tau-\frac{n}{q'}-|\nu|)}
\frac{|B(\mathbf{0},r)|^{1+\frac{|\nu|}{n}}}{\|\mathbf{1}_{B(\mathbf{0},r)}\|_X}\noz\\
&\ls 2^{-j(\tau+\frac{n}{q}-\frac{n}{u})}
\frac{|2^{j+1}B(\mathbf{0},r)|^{1+\frac{|\nu|}{n}}}{\|\mathbf{1}_{2^{j+1}B(\mathbf{0},r)}\|_X}.\noz
\end{align}
From \eqref{4.32x}, \eqref{L-7.2}, and \eqref{eta-1}, it follows that,
for any $\nu\in\zz_+^n$ with $|\nu|\leq s$, and for any $j\in\zz_+$,
\begin{align*}
\lf\|\widetilde{\alpha}_\nu^{(j)}\r\|_{L^q(\rn)}
&\leq \lf\|\widetilde{\alpha}_\nu^{(j)}
\r\|_{L^{\fz}(\rn)}\lf|2^{j+1}B(\mathbf{0},r)\r|^{\frac{1}{q}}
\ls\frac{|\eta_{\nu}^{(j)}|
|2^{j+1}B(\mathbf{0},r)|^{\frac{1}{q}}}{(2^{j-1}r)^{|\nu|}|L_j|}\\
&\sim\frac{|\eta_{\nu}^{(j)}|}{(2^{j-1}r)^{|\nu|}}
\lf|2^{j+1}B(\mathbf{0},r)\r|^{-\frac{1}{q'}}
\sim\lf|\eta_{\nu}^{(j)}\r|
\lf|2^{j+1}B(\mathbf{0},r)\r|^{-\frac{1}{q'}-\frac{|\nu|}{n}}\\
&\ls2^{-j(\tau+\frac{n}{q}-\frac{n}{u})}
\frac{|2^{j+1}B(\mathbf{0},r)|^{\frac{1}{q}}}{\|\mathbf{1}_{2^{j+1}B(\mathbf{0},r)}\|_X}.
\end{align*}
By this and \eqref{4.14x}, we conclude that there exists a positive constant
$\widetilde{C}$, independent of $j$, such that, for any
$\nu\in\zz_+^n$ with $|\nu|\leq s$,
and for any $j\in\zz_+$, $\widetilde{A}_{\nu}^{(j)}
:=\frac{\widetilde{\alpha}_\nu^{(j)}}{\widetilde{\lambda}_{j}}$
is an $(X,q,s)$-atom,
where
\begin{align}\label{4.13x}
\widetilde{\lambda}_{j}:=\widetilde{C}2^{-j(\tau+\frac{n}{q}-\frac{n}{u})}.
\end{align}
This is a desired conclusion of $\widetilde{\alpha}_{\nu}^{(j)}$ for any
$j\in\zz_+$ and $\nu\in\zz_+^n$ with
$|\nu|\leq s$.

Next, we prove \eqref{eta-3'}.
Indeed, using \eqref{L-7.2},
\eqref{eta-1}, and $\tau>\frac{n}{q'}$,
we find that, for any given $\phi\in \mathcal{S}(\rn)$ and for any $l\in\nn$,
\begin{align*}\
&\lf|\int_{\rn}\frac{\eta_{\nu}^{(l)}\psi_{\nu}^{(l)}(x)
\mathbf{1}_{L_l}(x)}{|L_l|}\phi(x)\,dx\r|\\
&\quad
\lesssim2^{l(-\tau+\frac{n}{q'})}
\fint_{L_l}\lf|\phi(x)\r|\,dx
\lesssim 2^{l(-\tau+\frac{n}{q'})}\|\phi\|_{L^{\fz}(\rn)}\to 0
\end{align*}
as $l\to\fz$, where the implicit positive constants depend on $r$.
This shows that \eqref{eta-3'} holds true.

Finally, we show that
\begin{align}\label{M-Ml-JN**}
M=\sum_{j\in\zz_+}\lambda_j A_j
+\sum_{j\in\zz_+}\sum_{\{\nu\in\zz_+^n:\ |\nu|\leq s\}}
\widetilde{\lambda}_{j}\widetilde{A}_\nu^{(j)}
\end{align}
both in $H_X(\rn)$ and pointwisely on $\rn$.
Indeed, using both \eqref{M-l'} and \eqref{sum-Pj},
we conclude that, for any $l\in\nn$,
\begin{align}\label{3.19x}
M\mathbf{1}_{2^lB(\mathbf{0},r)}
&=\sum_{j=0}^{l}\lambda_j A_j+
\sum_{\{\nu\in\zz_+^n:\ |\nu|\leq s\}}
\sum_{j=0}^{l-1}
\widetilde{\lambda}_{j}\widetilde{A}_\nu^{(j)}\\
&\quad-\sum_{\{\nu\in\zz_+^n:\ |\nu|\leq s\}}
\frac{\eta_{\nu}^{(l)}\psi_{\nu}^{(l)}\mathbf{1}_{L_l}}{|L_l|}.\noz
\end{align}
By the definition of $\{L_j\}_{j\zz_+}$, we easily find that,
for any $x\in \rn$, there exists a unique $l_x\in\zz_+$ such that $x\in L_{l_x}$,
which, together with \eqref{3.19x}, $\supp\,(A_j)\subset L_j$,
and $\supp\,(\widetilde{A}_{\nu}^{(j)})\subset (L_{j+1}\cup L_j)$,
further implies that
\begin{align*}
M(x)&=M(x)\mathbf{1}_{2^{l_x+1}B(\mathbf{0},r)}(x)\\
&=\sum_{j=0}^{l_x+1}\lambda_j A_j(x)+
\sum_{\{\nu\in\zz_+^n:\ |\nu|\leq s\}}
\sum_{j=0}^{l_x}
\widetilde{\lambda}_{j}\widetilde{A}_\nu^{(j)}(x)\\
&\quad-\sum_{\{\nu\in\zz_+^n:\ |\nu|\leq s\}}
\frac{\eta_{\nu}^{(l_x+1)}\psi_{\nu}^{(l_x+1)}(x)\mathbf{1}_{L_{l_x+1}}(x)}{|L_l|}\\
&=\sum_{j=0}^{\fz}\lambda_j A_j(x)+
\sum_{\{\nu\in\zz_+^n:\ |\nu|\leq s\}}
\sum_{j=0}^{\fz}
\widetilde{\lambda}_{j}\widetilde{A}_\nu^{(j)}(x).
\end{align*}
This shows that \eqref{M-Ml-JN**} holds true pointwisely on $\rn$.
Besides, from \eqref{3.19x}, the assumption that $M\in L^q(\rn)$, \eqref{3.19x},
the Lebesgue dominated convergence theorem,
and \eqref{eta-3'}, we deduce that,
for any given $\phi\in \mathcal{S}(\rn)$ and for any $l\in\nn$,
\begin{align*}
&\lf|\lf\langle M-\sum_{j=0}^{l}\lambda_j A_j
-\sum_{j=0}^{l-1}\sum_{\{\nu\in\zz_+^n:\ |\nu|\leq s\}}
\widetilde{\lambda}_{j}\widetilde{A}_\nu^{(j)},\phi\r\rangle\r|\\
&\quad=\lf|\int_{\rn}\lf[M(x)-\sum_{j=1}^{l}\lambda_j A_j(x)
-\sum_{j=0}^{l-1}\sum_{\{\nu\in\zz_+^n:\ |\nu|\leq s\}}
\widetilde{\lambda}_{j}\widetilde{A}_\nu^{(j)}(x)\r]\phi(x)\,dx\r|\\
&\quad\leq\sum_{\{\nu\in\zz_+^n:\ |\nu|\leq s\}}
\lf|\int_{\rn}\frac{\eta_{\nu}^{(l)}\psi_{\nu}^{(l)}(x)
\mathbf{1}_{L_l}(x)}{|L_l|}\phi(x)\,dx\r|
+\lf|\int_{\rn\setminus 2^lB(\mathbf{0},r)}M(x)\phi(x)\,dx\r|\\
&\quad\to0
\end{align*}
as $l\to\fz$, and hence
\begin{align}\label{M-Ml-JN*}
M=\sum_{j\in\zz_+}\lambda_j A_j
+\sum_{j\in\zz_+}\sum_{\{\nu\in\zz_+^n:\ |\nu|\leq s\}}
\widetilde{\lambda}_{j}\widetilde{A}_\nu^{(j)}
\end{align}
in $\mathcal{S}'(\rn)$.
Using this,
\cite[Theorem 3.6]{SHYY}, \eqref{rem-key-02},  \eqref{M-Ml-JN*}, \eqref{lam-j}, \eqref{4.13x}, and
$\tau>n(\frac{1}{u}-\frac{1}{q})$, we conclude that there exists a $b\in(0,d)$
such that, for any $N\in\nn$,
\begin{align*}
&\lf\|M-\sum_{j=0}^N\lambda_j A_j
-\sum_{j=0}^N\sum_{\{{\nu}\in\zz_+^n:\ |{\nu}|\leq s\}}
\widetilde{\lambda}_{j}\widetilde{A}_{\nu}^{(j)}\r\|_{H_X(\rn)}\\
&\quad\ls\lf\|\lf\{\sum_{j=N+1}^{\fz}\lf[\frac{\lambda_j}
{\|\mathbf{1}_{2^jB(\mathbf{0},r)}\|_X}\r]^{b}
\mathbf{1}_{2^jB(\mathbf{0},r)}\r\}^{\frac{1}{{b}}}\r\|_{X}
+\lf\|\lf\{\sum_{j=N+1}^{\fz}\lf[\frac{\wz\lambda_j}
{\|\mathbf{1}_{2^{j+1}B(\mathbf{0},r)}\|_X}\r]^{b}
\mathbf{1}_{2^{j+1}B(\mathbf{0},r)}\r\}^{\frac{1}{{b}}}\r\|_{X}\\
&\quad\sim\lf\|\sum_{j=N+1}^{\fz}\lf[\frac{\lambda_j}
{\|\mathbf{1}_{2^jB(\mathbf{0},r)}\|_X}\r]^{b}\mathbf{1}_{2^jB(\mathbf{0},r)}
\r\|_{X^{\frac{1}{b}}}^{\frac{1}{{b}}}
+\lf\|\sum_{j=N+1}^{\fz}\lf[\frac{\wz\lambda_j}
{\|\mathbf{1}_{2^{j+1}B(\mathbf{0},r)}\|_X}\r]^{b}\mathbf{1}_{2^{j+1}B(\mathbf{0},r)}
\r\|_{X^{\frac{1}{{b}}}}^{\frac{1}{{b}}}\\
&\quad\ls\lf\{\sum_{j=N+1}^{\fz}\lf\|\lf[\frac{\lambda_j}
{\|\mathbf{1}_{2^jB(\mathbf{0},r)}\|_X}\r]^{b}\mathbf{1}_{2^jB(\mathbf{0},r)}
\r\|_{X^{\frac{1}{{b}}}}\r\}^{\frac{1}{b}}
+\lf\{\sum_{j=N+1}^{\fz}\lf\|\lf[\frac{\wz\lambda_j}
{\|\mathbf{1}_{2^{j+1}B(\mathbf{0},r)}\|_X}\r]^{b}\mathbf{1}_{2^{j+1}B(\mathbf{0},r)}
\r\|_{X^{\frac{1}{{b}}}}\r\}^{\frac{1}{{b}}}\\
&\quad\sim\lf[\sum_{j=N+1}^{\fz}2^{-j(\tau+\frac{n}{q}-\frac{n}{u}){b}}\r]^{\frac{1}{{b}}}
+\lf[\sum_{j=N+1}^{\fz}2^{-(j+1)(\tau+\frac{n}{q}-\frac{n}{u}){b}}\r]^{\frac{1}{{b}}}\to0
\end{align*}
as $N\to\fz$.
By this and \cite[Theorem 3.6]{SHYY}, we find that \eqref{M-Ml-JN**} holds true,
which further implies that \eqref{Mac-01} holds true.
This finishes the proof of Proposition \ref{HK-mole}.
\end{proof}

We also need the following lemma; since its proof is similar to that
of \cite[Lemma 3.14]{jtyyz3}, we only sketch some important steps.

\begin{lemma}\label{lem-518-01}
Let both $X$ and $p_-$ satisfy Assumption \ref{assump1}.
Let $q\in[1,\fz)$, $s\in\zz_+$, and $d\in (0,\fz)$.
Then there exists a positive constant $C$ such that, for any
given $u\in(0,p_-)$, and for any $g\in \mathcal{L}_{X,q,s,d}(\rn)$, any
ball $B(z,r)\in\mathbb{B}(\rn)$ with $z\in\rn$ and $r\in(0,\fz)$,
and any $k\in\zz_+$,
\begin{align*}
&\lf[\fint_{2^kB(z,r)}\lf|g(x)
-P_{B(z,r)}^{(s)}(g)(x)\r|^q\,dy\r]^{\frac{1}{q}}\\
&\quad\leq C k\lf[2^{ks}+2^{nk(\frac{1}{u}-1)}\r]
\frac{\|\mathbf{1}_{B(z,r)}\|_{X}}{|B(z,r)|}
\|g\|_{\mathcal{L}_{X,q,s,d}(\rn)}.
\end{align*}
\end{lemma}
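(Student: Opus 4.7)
The plan is to decompose $g-P_{B(z,r)}^{(s)}(g)$ through the dyadic chain of balls $\{2^j B(z,r)\}_{j=0}^{k}$, estimate each piece separately using Definition \ref{2d2} (with a single ball and $\lambda=1$), and then combine the contributions using the growth control \eqref{key-c} of $\|\mathbf{1}_{2^jB(z,r)}\|_X$ together with a polynomial growth estimate. Throughout, write $B:=B(z,r)$.

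First I would write the telescoping identity
\begin{align*}
g-P_{B}^{(s)}(g)=\lf[g-P_{2^kB}^{(s)}(g)\r]+\sum_{j=0}^{k-1}\lf[P_{2^{j+1}B}^{(s)}(g)-P_{2^jB}^{(s)}(g)\r],
\end{align*}
and then, exploiting $P_{2^jB}^{(s)}(P)=P$ for $P\in\mathcal{P}_s(\rn)$, rewrite each difference as
\begin{align*}
P_{2^{j+1}B}^{(s)}(g)-P_{2^jB}^{(s)}(g)=-P_{2^jB}^{(s)}\lf(g-P_{2^{j+1}B}^{(s)}(g)\r).
\end{align*}
For the tail term, Definition \ref{2d2} applied with the single ball $2^kB$ and weight $\lambda=1$ yields
\begin{align*}
\lf[\fint_{2^kB}\lf|g(x)-P_{2^kB}^{(s)}(g)(x)\r|^q\,dx\r]^{\frac{1}{q}}
\leq\frac{\|\mathbf{1}_{2^kB}\|_X}{|2^kB|}\|g\|_{\mathcal{L}_{X,q,s,d}(\rn)},
\end{align*}
which, by \eqref{key-c}, is dominated by a constant times $2^{kn(\frac{1}{u}-1)}\frac{\|\mathbf{1}_B\|_X}{|B|}\|g\|_{\mathcal{L}_{X,q,s,d}(\rn)}$ for any $u\in(0,p_-)$.

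For each polynomial difference, Remark \ref{rem-ball-B2} combined with the H\"older inequality gives
\begin{align*}
\lf\|P_{2^{j+1}B}^{(s)}(g)-P_{2^jB}^{(s)}(g)\r\|_{L^{\fz}(2^jB)}
\ls\fint_{2^{j+1}B}\lf|g(y)-P_{2^{j+1}B}^{(s)}(g)(y)\r|\,dy
\ls\lf[\fint_{2^{j+1}B}\lf|g(y)-P_{2^{j+1}B}^{(s)}(g)(y)\r|^q\,dy\r]^{\frac1q},
\end{align*}
and the single-ball instance of Definition \ref{2d2} together with \eqref{key-c} bounds the right-hand side by a constant multiple of $2^{(j+1)n(\frac{1}{u}-1)}\frac{\|\mathbf{1}_B\|_X}{|B|}\|g\|_{\mathcal{L}_{X,q,s,d}(\rn)}$. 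Next, using the standard dilation estimate for polynomials of degree at most $s$, namely $\|P\|_{L^{\fz}(2^kB)}\ls 2^{(k-j)s}\|P\|_{L^{\fz}(2^jB)}$, one controls the $L^q$-average of $P_{2^{j+1}B}^{(s)}(g)-P_{2^jB}^{(s)}(g)$ over $2^kB$ by a constant times $2^{(k-j)s}2^{(j+1)n(\frac{1}{u}-1)}\frac{\|\mathbf{1}_B\|_X}{|B|}\|g\|_{\mathcal{L}_{X,q,s,d}(\rn)}$.

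Finally I would sum over $j\in\{0,\ldots,k-1\}$: the geometric sum
\begin{align*}
\sum_{j=0}^{k-1}2^{(k-j)s}2^{(j+1)n(\frac{1}{u}-1)}
=2^{ks+n(\frac1u-1)}\sum_{j=0}^{k-1}2^{j[n(\frac1u-1)-s]}
\end{align*}
is dominated in all three cases (exponent positive, negative, or zero) by $k[2^{ks}+2^{kn(\frac{1}{u}-1)}]$, and adding the tail contribution produces the claimed bound. The only delicate point is the polynomial dilation inequality $\|P\|_{L^{\fz}(2^kB)}\ls 2^{(k-j)s}\|P\|_{L^{\fz}(2^jB)}$, which is a standard consequence of the norm equivalence on the finite-dimensional space $\mathcal{P}_s(\rn)$ after affine rescaling to the unit ball; this is where the factor $2^{ks}$ enters, and balancing it against the Muckenhoupt-type growth factor $2^{kn(\frac{1}{u}-1)}$ is the main (and essentially only) nonroutine step.
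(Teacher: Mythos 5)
Your proof is correct and follows essentially the same route as the paper: the paper cites the proof of \cite[Lemma 3.14]{jtyyz3} for the chain inequality bounding the oscillation of $g-P_{B}^{(s)}(g)$ over $2^kB$ by $\sum_{j=1}^{k}2^{(k-j+1)s}$ times the oscillations over the intermediate balls, and then applies the single-ball instance of the $\mathcal{L}_{X,q,s,d}$-norm together with \eqref{key-c} and the same case analysis of the geometric sum. Your telescoping argument with Remark \ref{rem-ball-B2} and the polynomial dilation estimate simply makes that cited step explicit, so the proposal is a self-contained version of the paper's proof.
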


\begin{proof}
Let all the symbols be the same as in the present lemma.
Also, let $u\in(0,p_-)$. Then,
using the proof of \cite[Lemma 3.14]{jtyyz3}, we conclude that,
for any $g\in \mathcal{L}_{X,q,s,d}(\rn)$, any
ball $B(z,r)\in\mathbb{B}(\rn)$ with both $z\in\rn$ and $r\in(0,\fz)$,
and any $k\in\zz_+$,
\begin{align*}
&\lf[\fint_{2^kB(z,r)}\lf|g(x)
-P_{B(z,r)}^{(s)}(g)(x)\r|^q\,dy\r]^{\frac{1}{q}}\\
&\quad\ls\sum_{j=1}^k 2^{(k-j+1)s}\lf[\fint_{2^jB(z,r)}
\lf|g(x)-P_{2^jB(z,r)}^{(s)}(g)(x)\r|^q\,dx\r]^{\frac{1}{q}},
\end{align*}
which, combined with the definition of $\|\cdot\|_{\mathcal{L}_{X,q,s,d}(\rn)}$,
$u\in(0,p_-)$, and \eqref{key-c}, further implies that
\begin{align*}
&\lf[\fint_{2^kB(z,r)}\lf|g(x)
-P_{B(z,r)}^{(s)}(g)(x)\r|^q\,dy\r]^{\frac{1}{q}}\\
&\quad\ls\sum_{j=1}^k 2^{(k-j+1)s}
\frac{\|\mathbf{1}_{2^jB(z,r)}\|_{X}}{|2^jB(z,r)|}
\|g\|_{\mathcal{L}_{X,q,s,d}(\rn)}\\
&\quad\ls\sum_{j=1}^k 2^{(k-j+1)s+jn(\frac{1}{u}-1)}
\frac{\|\mathbf{1}_{B(z,r)}\|_{X}}{|B(z,r)|}
\|g\|_{\mathcal{L}_{X,q,s,d}(\rn)}\\
&\quad\ls k\lf[2^{ks}+2^{nk(\frac{1}{u}-1)}\r]
\frac{\|\mathbf{1}_{B(z,r)}\|_{X}}{|B(z,r)|}
\|g\|_{\mathcal{L}_{X,q,s,d}(\rn)}.
\end{align*}
This finishes the proof of Lemma \ref {lem-518-01}.
\end{proof}

Using both Proposition \ref{HK-mole} and Lemma \ref{lem-518-01}, we obtain the following conclusion.

\begin{lemma}\label{thm-dual-T}
Let $X$, $p_-$, $d$, $q$, and $s$ be the same as in Definition \ref{finatom}.
Let
\begin{align}\label{tau-01}
\tau\in \lf(n\lf[\frac{1}{p_-}-\frac{1}{q}\r],\fz\r)
\cap\lf(\frac{n}{q'}+s,\fz\r),
\end{align}
where $\frac{1}{q}+\frac{1}{q'}=1$.
Further assume that $X$ has an absolutely continuous quasi-norm.
Then, for any $(X,q,s,\tau)$-molecule $M$ and any $g\in
\mathcal{L}_{X,q',s,d}(\rn)$,
\begin{align}\label{518-01}
\lf\langle L_g,M\r\rangle=\int_{\rn}M(x)g(x)\,dx,
\end{align}
where $L_g$ is the same as in \eqref{2te1}.
\end{lemma}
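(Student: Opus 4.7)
The plan is to apply Proposition~\ref{HK-mole} to approximate the molecule $M$ by finite atomic partial sums, pass to the limit in the duality pairing using the continuity of $L_g$, pass to the limit in the integral using the dominated convergence theorem, and then reconcile both sides.

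Concretely, I would not use the coarse decomposition $M=\sum_j c_j a_j$ from the statement of Proposition~\ref{HK-mole}, but instead the finer partial sums built in its proof, namely
$$
M_l := \sum_{j=0}^{l} \lambda_{j} A_{j} + \sum_{\{\nu\in\zz_+^n:\ |\nu|\le s\}} \sum_{j=0}^{l-1} \widetilde{\lambda}_{j} \widetilde{A}_\nu^{(j)},
$$
because identity \eqref{3.19x} provides the explicit formula $M_l = M\mathbf{1}_{2^l B(\mathbf{0},r)} + R_l$, with remainder $R_l := \sum_{|\nu|\le s} \eta_\nu^{(l)} \psi_\nu^{(l)} \mathbf{1}_{L_l}/|L_l|$ supported on the annulus $L_l$. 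Each $M_l$ is a finite linear combination of $(X,q,s)$-atoms, hence lies in $H_{\mathrm{fin}}^{X,q,s,d}(\rn)$, and $M_l\to M$ in $H_X(\rn)$ (this is precisely what the last display of the proof of Proposition~\ref{HK-mole} establishes). Since $L_g$ extends continuously from $H_{\mathrm{fin}}^{X,q,s,d}(\rn)$ to $H_X(\rn)$ by Lemma~\ref{2t1} and agrees with integration against $g$ on finite atoms,
$$
\lf\langle L_g, M\r\rangle = \lim_{l\to\fz}\lf\langle L_g, M_l\r\rangle = \lim_{l\to\fz}\int_{\rn} M_l(x) g(x)\,dx.
$$

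It therefore suffices to show $\int_{\rn} M_l g\,dx \to \int_{\rn} M g\,dx$ as $l\to\fz$. Writing
$$
\int_{\rn} M_l g\,dx = \int_{2^l B(\mathbf{0},r)} M g\,dx + \int_{\rn} R_l g\,dx,
$$
the first summand converges to $\int_{\rn} M g\,dx$ by the dominated convergence theorem, since $Mg\in L^1(\rn)$ by Lemma~\ref{lem-mol-01}.

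The main obstacle is showing $\int_{\rn} R_l g\,dx \to 0$. I would split $g = [g-P_B^{(s)}(g)] + P_B^{(s)}(g)$ and control each part separately. For the oscillation part, combining the pointwise bound \eqref{L-7.2} on $\psi_\nu^{(l)}$ with the decay estimate \eqref{eta-1} on $\eta_\nu^{(l)}$ yields $\|R_l\|_{L^\fz(L_l)} \lesssim 2^{-l(\tau+n/q)}/\|\mathbf{1}_{B(\mathbf{0},r)}\|_X$; pairing this with H\"older's inequality and Lemma~\ref{lem-518-01} produces a bound of the form $l\,[2^{-l(\tau-n/q'-s)} + 2^{-l(\tau-n(1/u-1/q))}]\,\|g\|_{\mathcal{L}_{X,q',s,d}(\rn)}$, which tends to zero under the hypotheses $\tau>n/q'+s$ and $\tau>n(1/p_--1/q)$ of \eqref{tau-01}, the latter being realized by choosing $u\in(0,p_-)$ sufficiently close to $p_-$. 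For the polynomial part, the orthogonality relation \eqref{4.10x} of the dual basis $\{\psi_\nu^{(l)}\}$ reduces $\int R_l P_B^{(s)}(g)\,dx$ to a finite linear combination of $\eta_\nu^{(l)}$ with coefficients given by the fixed expansion of $P_B^{(s)}(g)$ in the monomial basis; each such $\eta_\nu^{(l)}$ tends to zero by \eqref{eta-1}. Combining the two parts gives $\int_{\rn} R_l g\,dx\to 0$, and the proof is complete.
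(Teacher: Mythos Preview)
Your proof is correct and follows essentially the same route as the paper's: decompose the molecule via Proposition~\ref{HK-mole}, use continuity of $L_g$ on $H_X(\rn)$ together with its integral representation on atoms, apply dominated convergence via Lemma~\ref{lem-mol-01}, and show the annular remainder $\int R_l g\to 0$ using the estimates \eqref{L-7.2}, \eqref{eta-1}, and Lemma~\ref{lem-518-01}. Your handling of the polynomial part of the remainder via the duality relation~\eqref{4.10x} is a mild streamlining of the paper's argument (which instead controls $\|P_B^{(s)}(g)\|_{L^\infty(2^lB)}$ by polynomial growth), but the overall structure is identical.
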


\begin{proof}
Let all the symbols be the same as in the present lemma.
Let $M$ be any given $(X,q,s,\tau)$-molecule centered at the ball $B(z,r)$
with both $z\in\rn$ and $r\in(0,\fz)$. Then, from \cite[Theorem 3.9]{SHYY}, we deduce that $M\in H_X(\rn)$.
Without loss of generality, we may assume that $z=\mathbf{0}$.
Indeed, if \eqref{518-01} holds true for $z=\mathbf{0}$, then, for
any $z\in\rn$, $M(\cdot+z)$ is an $(X,q,s,\tau)$-molecule centered at the ball $B(\mathbf{0},r)$.
By Proposition \ref{HK-mole}, we conclude that
there exists a sequence $\{c_j\}_{j\in\zz_+}\subset\cc$
and a sequence $\{a_{j}\}_{j\in\zz_+}$ of $(X,q,s)$-atoms supported, respectively,
in $\{2^j B(z,r)\}_{j\in\zz_+}$ such that
\begin{align}\label{sjb}
M=\sum_{j\in\zz_+}c_ja_j
\end{align}
both in $H_X(\rn)$ and pointwisely on $\rn$.
In addition, by an argument similar to that used in
the proof of \eqref{jgsj}, we conclude that
\begin{align*}
M(\cdot+z)=\sum_{j\in\zz_+}c_ja_j(\cdot+z)
\end{align*}
both in $H_X(\rn)$ and pointwisely on $\rn$.
Thus, using this, Lemma \ref{2t1}(i), \eqref{sjb},
the above assumption, and noticing that $g(\cdot+z)\in
\mathcal{L}_{X,q',s,d}(\rn)$ for any $g\in
\mathcal{L}_{X,q',s,d}(\rn)$, we find that, for any $g\in
\mathcal{L}_{X,q',s,d}(\rn)$,
\begin{align*}
\lf\langle L_g,M\r\rangle&=\sum_{j\in\zz_+}c_j
\lf\langle L_g,a_j\r\rangle
=\sum_{j\in\zz_+}c_j\int_{\rn}
g(x)a_j(x)\,dx\\
&=\sum_{j\in\zz_+}c_j\int_{\rn}
g(x+z)a_j(x+z)\,dx
=\sum_{j\in\zz_+}c_j
\lf\langle L_{g(\cdot+z)},a_j(\cdot+z)\r\rangle\\
&=\lf\langle L_{g(\cdot+z)},
\sum_{j\in\zz_+}c_ja_j(\cdot+z)\r\rangle
=\lf\langle L_{g(\cdot+z)},
M(\cdot+z)\r\rangle\\
&=\int_{\rn}g(x+z)M(x+z)dx
=\int_{\rn}g(x)M(x)dx,
\end{align*}
which further implies that \eqref{518-01} holds true for any $z\in\rn$.
Thus, in the remainder of this proof, we assume that $z=\mathbf{0}$.

Let $g\in\mathcal{L}_{X,q',s,d}(\rn)$, $L_0:=B(\mathbf{0},r)$, and
$L_j:=2^jB(\mathbf{0},r)\setminus 2^{j-1}B(\mathbf{0},r)$ for any $j\in\nn$.
Moreover, for any $\nu\in\zz_+^n$ with $|\nu|\leq s$, and for any $j\in\zz_+$,
let $\eta_{\nu}^{(j)}$, $\psi_{\nu}^{(j)}$, $\lambda_j$, $A_j$, $\wz \lambda_j$,  and
$\wz A_{\nu}^{(j)}$ be the same as in the proof of Proposition \ref{HK-mole}.
Then, using \eqref{M-Ml-JN**}, we have
\begin{align}\label{M-Ml-JN***}
M=\sum_{j\in\zz_+}\lambda_j A_j
+\sum_{j\in\zz_+}\sum_{\{\nu\in\zz_+^n:\ |\nu|\leq s\}}
\widetilde{\lambda}_{j}\widetilde{A}_\nu^{(j)}
\end{align}
both in $H_X(\rn)$ and pointwisely on $\rn$ and, using \eqref{3.19x}, we obtain,
for any $l\in\nn$,
\begin{align}\label{3.19xx}
M\mathbf{1}_{2^lB(\mathbf{0},r)}
&=\sum_{j=0}^{l}\lambda_j A_j+
\sum_{\{\nu\in\zz_+^n:\ |\nu|\leq s\}}
\sum_{j=0}^{l-1}
\widetilde{\lambda}_{j}\widetilde{A}_\nu^{(j)}\\
&\quad-\sum_{\{\nu\in\zz_+^n:\ |\nu|\leq s\}}
\frac{\eta_{\nu}^{(l)}\psi_{\nu}^{(l)}\mathbf{1}_{L_l}}{|L_l|}.\noz
\end{align}
Notice that, for any $\nu\in\zz_+^n$ with
$|\nu|\leq s$, and for any $j\in\zz_+$, both $A_j$
and $\wz A_\nu^{(j)}$ are $(X,q,s)$-atoms.
From this, Lemma \ref{2t1}, and \eqref{M-Ml-JN***},
we deduce that
\begin{align}\label{518-04}
\lf\langle L_g,M\r\rangle
&=\lf\langle L_g,\sum_{j\in\zz_+}\lambda_j A_j
+\sum_{j\in\zz_+}\sum_{\{\nu\in\zz_+^n:\ |\nu|\leq s\}}
\widetilde{\lambda}_{j}\widetilde{A}_\nu^{(j)}\r\rangle\\
&=\sum_{j\in\zz_+}\lf\langle L_g,\lambda_j A_j\r\rangle
+\sum_{j\in\zz_+}\sum_{\{\nu\in\zz_+^n:\ |\nu|\leq s\}}
\lf\langle L_g,\wz \lambda_{j}\widetilde{A}_\nu^{(j)}\r\rangle\noz\\
&=\sum_{j\in\zz_+}\int_{\rn}\lambda_j A_j(x)g(x)\,dx+
\sum_{j\in\zz_+}\sum_{\{\nu\in\zz_+^n:\ |\nu|\leq s\}}
\int_{\rn}\widetilde{\lambda}_{j}\widetilde{A}_\nu^{(j)}(x)g(x)\,dx.\noz
\end{align}
Thus, to finish the proof of \eqref{518-01}, it suffices to show that
\begin{align}\label{22-5-18-03}
\int_{\rn}M(x)g(x)\,dx&=\sum_{j\in\zz_+}\int_{\rn}\lambda_j A_j(x)g(x)\,dx\\
&\quad+\sum_{j\in\zz_+}\sum_{\{\nu\in\zz_+^n:\ |\nu|\leq s\}}
\int_{\rn}\widetilde{\lambda}_{j}\widetilde{A}_\nu^{(j)}(x)g(x)\,dx\noz.
\end{align}

Now, we prove \eqref{22-5-18-03}.
Indeed, by both Lemma \ref{lem-mol-01} and the Lebesgue dominated convergence
theorem, we find that
\begin{align}\label{22-5-18-02}
\int_{\rn}M(x)g(x)\,dx=\lim_{l\to\fz}\int_{2^{l}B(z,r)}M(x)g(x)\,dx.
\end{align}
Moreover, using \eqref{3.19xx}, we conclude that, for any $l\in\nn$,
\begin{align}\label{22-5-18-00}
&\int_{2^{l}B(\mathbf{0},r)}M(x)g(x)\,dx\\
&\quad=\sum_{j=0}^{l}\int_{\rn}\lambda_j A_j(x)g(x)\,dx+
\sum_{j=0}^{l-1}\sum_{\{\nu\in\zz_+^n:\ |\nu|\leq s\}}
\int_{\rn}\widetilde{\lambda}_{j}\widetilde{A}_\nu^{(j)}(x)g(x)\,dx\noz\\
&\qquad-\sum_{\{\nu\in\zz_+^n:\ |\nu|\leq s\}}\int_{L_l}
\frac{\eta_{\nu}^{(l)}\psi_{\nu}^{(l)}(x)\mathbf{1}_{L_l}(x)g(x)}{|L_l|}\,dx.\noz
\end{align}
To show \eqref{22-5-18-03},
we claim that
\begin{align}\label{22-5-18-01}
\lim_{l\to \fz}\sum_{\{\nu\in\zz_+^n:\ |\nu|\leq s\}}\int_{L_l}
\frac{\eta_{\nu}^{(l)}\psi_{\nu}^{(l)}(x)\mathbf{1}_{L_l}(x)g(x)}{|L_l|}\,dx=0.
\end{align}
From \eqref{L-7.2}, \eqref{eta-1}, the H\"older inequality, Lemma
\ref{lem-518-01}, \cite[Lemma 2.19]{jtyyz3}, Remark \ref{rem-ball-B2},
and \eqref{tau-01}, we deduce that, for any given $u\in(\frac{nq}{\tau q+n},p_-)$,
and for any $\nu\in\zz_+^n$ with $|\nu|\leq s$, and for any $l\in\zz_+$,
\begin{align*}\
&\lf|\int_{\rn}\frac{\eta_{\nu}^{(l)}\psi_{\nu}^{(l)}(x)
\mathbf{1}_{L_l}(x)}{|L_l|}g(x)\,dx\r|\\
&\quad\lesssim2^{-l(\tau-\frac{n}{q'})}
\fint_{2^lB(\mathbf{0},r)}\lf|g(x)\r|\,dx
\lesssim 2^{-l(\tau-\frac{n}{q'})}
\lf[\fint_{2^lB(\mathbf{0},r)}\lf|g(x)\r|^{q'}\,dx\r]^{\frac{1}{q'}}\\
&\quad\lesssim2^{-l(\tau-\frac{n}{q'})}
\lf\{\lf[\fint_{2^lB(\mathbf{0},r)}
\lf|g(x)-P_{B(\mathbf{0},r)}^{(s)}(g)(x)\r|^{q'}\,dx\r]^{\frac{1}{q'}}
+\lf\|P_{B(\mathbf{0},r)}^{(s)}(g)
\r\|_{L^{\fz}(2^lB(\mathbf{0},r))}\r\}\\
&\quad\ls2^{-l(\tau-\frac{n}{q'})}
l\lf[2^{ls}+2^{nl(\frac{1}{u}-1)}\r]
\|g\|_{\mathcal{L}_{X,q',s,d}(\rn)}
+2^{-l(\tau-\frac{n}{q'}-s)}
\lf\|P_{B(\mathbf{0},r)}^{(s)}(g)
\r\|_{L^{\fz}(B(\mathbf{0},r))}\\
&\quad \ls 2^{-l(\tau-\frac{n}{q'}-s)}l
\|g\|_{\mathcal{L}_{X,q',s,d}(\rn)}+2^{-l(\tau+\frac{n}{q}-\frac{n}{u})}l
\|g\|_{\mathcal{L}_{X,q',s,d}(\rn)}\\
&\qquad+2^{-l(\tau-\frac{n}{q'}-s)}
\fint_{B(\mathbf{0},r)}|g(x)|\,dx\\
&\quad\to 0
\end{align*}
as $l\to\fz$, where the implicit positive constants depend on $r$.
This implies that the above claim holds true.
Then, combining both \eqref{22-5-18-02} and \eqref{22-5-18-01}, and letting $l\to\fz$
in both sides of \eqref{22-5-18-00}, we conclude that \eqref{22-5-18-03} holds true.

All together, by \eqref{518-04} and \eqref{22-5-18-03},
we find that, for any $g\in \mathcal{L}_{X,q',s,d}(\rn)$,
\begin{align*}
\lf\langle L_g,M\r\rangle
&=\int_{\rn}M(x)g(x)\,dx.
\end{align*}
This finishes the proof of \eqref{518-01}, and hence of Lemma \ref{thm-dual-T}.
\end{proof}

The proof of the following lemma is a slight modification of the proof of \cite[Lemma 3.13]{cjy-01};
we omit the details here.

\begin{lemma}\label{lemma-atm}
Let $X$ be a ball quasi-Banach space and
$s\in[n-1,\infty)\cap\zz_+$.
Let $\beta\in(1,\infty)$, $\alpha\in(0,n)$, and $I_\alpha$ be the same as in
\eqref{cla-I}.
Assume that $p\in(1,\frac{n}{\alpha})$.
Then, for any $(X,p,s)$-atom $a$ supported in a ball $B\in\mathbb{B}(\rn)$, $I_\alpha(a)$
is an $(X^\beta,q,s-n+1,\tau)$-molecule,
centered at $B$, multiplied by a positive constant depending on $B$,
where $\frac{1}{q}:=\frac{1}{p}-\frac{\alpha}{n}$
and $\tau\in(0,n+s+1-\alpha-\frac{n}{q}]$.
\end{lemma}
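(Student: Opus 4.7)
\noindent\textbf{Proof proposal for Lemma \ref{lemma-atm}.}

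My plan is to verify directly the two defining properties of an $(X^\beta,q,s-n+1,\tau)$-molecule centered at $B$ (Definition \ref{def-mol}), after dividing by a positive constant
$C_{B}\sim|B|^{\alpha/n}/\|\mathbf{1}_B\|_X^{(\beta-1)/\beta}$,
which is exactly the factor required to convert the atom-size estimate $\|a\|_{L^p}\le|B|^{1/p}/\|\mathbf{1}_B\|_X$ into the molecule-size estimate involving $|B|^{1/q}/\|\mathbf{1}_B\|_{X^\beta}$ (recall $\|\mathbf{1}_B\|_{X^\beta}=\|\mathbf{1}_B\|_X^{1/\beta}$ and $\frac{1}{p}=\frac{1}{q}+\frac{\alpha}{n}$). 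Without loss of generality, I translate so that the center of $B$ is the origin; this lets me use $0$ as the expansion point in all the Taylor arguments below.

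For the size condition, I will split the annuli $L_0:=B$ and $L_j:=2^jB\setminus 2^{j-1}B$ ($j\ge1$) into a \emph{near} range $j\in\{0,1,2\}$ and a \emph{far} range $j\ge3$. In the near range I will invoke the Hardy--Littlewood--Sobolev inequality (Lemma \ref{fractional}(ii)), whose hypothesis $p\in(1,n/\alpha)$ is precisely what the statement assumes, to get $\|I_\alpha(a)\mathbf{1}_{L_j}\|_{L^q}\le\|I_\alpha(a)\|_{L^q}\lesssim\|a\|_{L^p}$, and then apply the atom estimate and the identity $1/p-1/q=\alpha/n$ to match the molecule bound with the factor $C_B$. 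In the far range, for $x\in L_j$ and $y\in B$ we have $|x-y|\sim 2^jr_B$, so I will use the $s$-order Taylor expansion of $y\mapsto|x-y|^{\alpha-n}$ about $y=0$ together with the vanishing moments of $a$ (Definition \ref{atom}(iii)) to obtain the pointwise estimate
$$
|I_\alpha(a)(x)|\lesssim\frac{r_B^{s+1}}{(2^jr_B)^{n+s+1-\alpha}}\|a\|_{L^1(B)},
$$
which, combined with $\|a\|_{L^1}\le|B|^{1/p'}\|a\|_{L^p}\le|B|/\|\mathbf{1}_B\|_X$ and $|L_j|^{1/q}\sim(2^jr_B)^{n/q}$, produces the required decay factor $2^{-j(n+s+1-\alpha-n/q)}\le 2^{-j\tau}$ exactly because of the choice $\tau\in(0,n+s+1-\alpha-n/q]$.

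The vanishing-moment property will be the main obstacle: for $|\gamma|\le s-n+1$ I must show $\int_{\rn}I_\alpha(a)(x)\,x^\gamma\,dx=0$. The difficulty is that the natural double integral $\int\!\int a(y)|x-y|^{\alpha-n}x^\gamma\,dy\,dx$ is \emph{not} absolutely convergent, since $|x|^{|\gamma|+\alpha-n}$ is not integrable at infinity for $|\gamma|\ge 0$, so Fubini cannot be applied directly. My plan is to first verify absolute convergence of $\int I_\alpha(a)\,x^\gamma\,dx$ itself via the far-field Taylor bound of Step 1(b), which gives $|x^\gamma I_\alpha(a)(x)|\lesssim|x|^{|\gamma|+\alpha-n-s-1}$ for $|x|$ large; this is integrable because the hypotheses $|\gamma|\le s-n+1$ and $\alpha<n$ yield $|\gamma|<s+1-\alpha$. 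Then I will regularize by inserting a Schwartz cutoff $\varphi_{\varepsilon}(x):=e^{-\varepsilon|x|^2}$, use Fubini on the now absolutely convergent double integral to get
$$
\int_{\rn}I_\alpha(a)(x)\,x^\gamma\varphi_\varepsilon(x)\,dx=\int_{\rn}a(y)\,I_\alpha\!\lf((\cdot)^\gamma\varphi_\varepsilon\r)(y)\,dy,
$$
and exploit the vanishing moments of $a$ up to order $s$ by subtracting the $s$-th order Taylor polynomial of the smooth function $y\mapsto I_\alpha((\cdot)^\gamma\varphi_\varepsilon)(y)$ at the origin. Integrating by parts to transfer $\partial_y^{s+1}$ onto $(\cdot)^\gamma\varphi_\varepsilon$, I will exploit that $\partial_x^\mu(x^\gamma)=0$ whenever $|\mu|=s+1>|\gamma|$ (since $|\gamma|\le s-n+1\le s$), so that only terms containing at least one derivative of $\varphi_\varepsilon$ survive; each such derivative vanishes pointwise as $\varepsilon\downarrow 0$ and, with a suitable dominating function provided by the Taylor-remainder/size estimate, the limit can be passed inside the integral to conclude $\int I_\alpha(a)\,x^\gamma\,dx=0$.

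Step 1 is routine and parallels the classical Taibleson--Weiss/Stein argument for the boundedness of $I_\alpha$ on Hardy spaces; the hard part is the careful justification in Step 2 of the interchange of limit and integration when removing the regularization, which is where the assumption $s\ge n-1$ is used to guarantee the strict inequality $|\gamma|<s+1-\alpha$ needed for uniform integrability of the dominating functions.
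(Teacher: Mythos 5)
Your proposal is correct in all essentials; note, though, that the paper itself omits the proof of Lemma \ref{lemma-atm} and defers to \cite[Lemma 3.13]{cjy-01}, so the comparison below is with the standard argument such proofs follow. Your treatment of the size condition (i) of Definition \ref{def-mol} --- the normalizing constant $C_B\sim|B|^{\alpha/n}\|\mathbf{1}_B\|_X^{-(\beta-1)/\beta}$ (which is exactly why the constant must be allowed to depend on $B$ absent \eqref{3.12x}), the Hardy--Littlewood--Sobolev inequality on the first few annuli, and the order-$s$ Taylor expansion of the kernel combined with the vanishing moments of $a$ on the far annuli, producing the decay $2^{-j(n+s+1-\alpha-n/q)}\le 2^{-j\tau}$ --- is the classical Taibleson--Weiss computation and is surely what the omitted proof does. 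Where you genuinely diverge is the moment condition (ii): the usual route (and the likely one in the companion paper) is through the Fourier transform, namely $\widehat{I_\alpha(a)}=c\,|\cdot|^{-\alpha}\widehat{a}$ together with $\widehat{a}(\xi)=O(|\xi|^{s+1})$ near the origin, so that $\widehat{I_\alpha(a)}(\xi)=O(|\xi|^{s+1-\alpha})=o(|\xi|^{s-n+1})$, while $x^\gamma I_\alpha(a)\in L^1(\rn)$ for $|\gamma|\le s-n+1$ makes $\widehat{I_\alpha(a)}$ of class $C^{s-n+1}$ with $\partial^\gamma\widehat{I_\alpha(a)}(\mathbf{0})$ proportional to the $\gamma$-th moment, which Taylor's theorem then forces to vanish. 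Your Gaussian-regularization/Fubini/Taylor-remainder argument is a self-contained real-variable substitute that also works, at the cost of the dominated-convergence bookkeeping you flag: for $|\mu|=s+1$ one has the uniform-in-$\varepsilon$ majorant $|\partial^\mu(x^\gamma e^{-\varepsilon|x|^2})|\lesssim\min\{\varepsilon,|x|^{|\gamma|-s-1}\}$, and $\int_{\{|x|\ge1\}}|x|^{|\gamma|-s-1+\alpha-n}\,dx<\infty$ precisely because $|\gamma|\le s-n+1<s+1-\alpha$. One small correction of emphasis: the hypothesis $s\ge n-1$ is not what produces the strict inequality $|\gamma|<s+1-\alpha$ --- that follows from $|\gamma|\le s-n+1$ together with $\alpha<n$; the role of $s\ge n-1$ is simply to ensure $s-n+1\in\zz_+$, so that the molecule class in the conclusion is well defined.
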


To show Theorem \ref{thm-main-II}, we also need the following lemma.

\begin{lemma}\label{dual01}
Let all the symbols be the same as in Theorem \ref{thm-main-I}.
Further assume that $\widetilde{I}_{\alpha}$ is bounded from
$\mathcal{L}_{X^{\beta},1,s,{\beta d}}(\rn)$
to $\mathcal{L}_{X,1,s,d}(\rn)$, namely,
there exists a positive constant $C$ such that, for any
$f\in \mathcal{L}_{X^{\beta},1,s,{\beta d}}(\rn)$,
\begin{align*}
\lf\|\widetilde{I}_{\alpha}(f)
\r\|_{\mathcal{L}_{X,1,s,d}(\rn)}
\leq C \|f\|_{\mathcal{L}_{X^{\beta},1,s,{\beta d}}(\rn)}.
\end{align*}
Then, for any $g\in \mathcal{L}_{X^\beta,1,s,\beta d}(\rn)$ and any
$(X,\infty,s)$-atom $a$,
\begin{align*}
\lf\langle L_g,I_\alpha(a)\r\rangle=\lf\langle L_{\wz
I_\alpha(g)},a\r\rangle,
\end{align*}
where $L_g$ is the same as in \eqref{2te1}, $L_{\wz
I_\alpha(g)}$ as in \eqref{2te1} with $g$
replaced by $\wz I_\alpha(g)$,
and $\frac{1}{q}+\frac{1}{q'}=1$.
\end{lemma}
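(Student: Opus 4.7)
The plan is to reduce both pairings $\lf\langle L_g,I_\alpha(a)\r\rangle$ and $\lf\langle L_{\wz I_\alpha(g)},a\r\rangle$ to honest Lebesgue integrals via Lemma \ref{thm-dual-T}, and then to verify the equality of the resulting integrals by a Fubini argument that exploits the vanishing moments of the atom $a$.

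For the reduction I would proceed as follows. Since $a$ is an $(X,\fz,s)$-atom, it is also an $(X,p,s)$-atom for every $p\in(1,\fz]$; choosing $p\in(\max\{1,p_0\},n/\alpha)$ (which is nonempty because $\alpha\in(0,1)$) and setting $\frac{1}{q}:=\frac{1}{p}-\frac{\alpha}{n}$, Lemma \ref{lemma-atm} presents $I_\alpha(a)$ as a positive constant multiple of an $(X^\beta,q,s-n+1,\tau)$-molecule centered at the support ball $B$, with $\tau$ chosen so that \eqref{tau-01} is satisfied (feasibility follows from the hypothesis on $s$ in Theorem \ref{thm-main-I}). Remark \ref{sgjs}(i) identifies $\mathcal{L}_{X^\beta,1,s,\beta d}(\rn)$ with $\mathcal{L}_{X^\beta,q',s-n+1,\beta d}(\rn)$, so Lemma \ref{thm-dual-T} yields $\lf\langle L_g,I_\alpha(a)\r\rangle=\int_{\rn}I_\alpha(a)(x)g(x)\,dx$. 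Similarly, the assumed boundedness of $\wz I_\alpha$ places $\wz I_\alpha(g)$ in $\mathcal{L}_{X,q',s,d}(\rn)$, and since $a$ is a fortiori an $(X,q,s,\tau)$-molecule (Remark \ref{mole-atom}), a second application of Lemma \ref{thm-dual-T} gives $\lf\langle L_{\wz I_\alpha(g)},a\r\rangle=\int_{\rn}a(y)\wz I_\alpha(g)(y)\,dy$.

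What remains, and what I expect to be the main obstacle, is the integral identity $\int_{\rn}a(y)\wz I_\alpha(g)(y)\,dy=\int_{\rn}I_\alpha(a)(x)g(x)\,dx$. Since $\wz I_\alpha(g)$ is only defined modulo $\mathcal{P}_s(\rn)$ and $a$ annihilates such polynomials, I would take the base ball in Definition \ref{I-w} to be $B_0:=B=B(x_0,r_0)$ (the support of $a$), so that $\wz I_{\alpha,B}(g)(y)=\int_{\rn}K_B(y,x)g(x)\,dx$ with $K_B$ the bracketed kernel in \eqref{D-Iw}. Fixing a radius $R>2r_0$, I would split the $x$-integration into $B$, $B(x_0,R)\setminus B$, and $\rn\setminus B(x_0,R)$. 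On the first two pieces the integrand is locally bounded and Fubini is routine; after swapping the order of integration, the polynomial terms in $K_B$ are killed by the vanishing moments of $a$, leaving exactly $\int_{B(x_0,R)}g(x)I_\alpha(a)(x)\,dx$.

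The tail $\rn\setminus B(x_0,R)$ is the only genuinely delicate estimate. Taylor's theorem (as used in the proof of Proposition \ref{I-w-h}) gives $|K_B(y,x)|\ls r_0^{s+1}|x-x_0|^{-(n+s+1-\alpha)}$ uniformly in $y\in B$; to justify Fubini on this piece I need $\int_{\rn\setminus B(x_0,R)}|g(x)||x-x_0|^{-(n+s+1-\alpha)}\,dx<\fz$, which I would obtain by decomposing $g=[g-P_B^{(s)}(g)]+P_B^{(s)}(g)$, bounding the first summand via Lemma \ref{lem-suit} applied to $X^\beta$, and observing that the polynomial summand yields an integral of type $\int|x|^{s}|x-x_0|^{-(n+s+1-\alpha)}\,dx$ at infinity, which is convergent \emph{precisely because} $\alpha\in(0,1)$. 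Once Fubini is available, swapping and using the vanishing moments of $a$ once more converts the tail into $\int_{\rn\setminus B(x_0,R)}g(x)I_\alpha(a)(x)\,dx$; summing the three pieces then produces the desired identity. Thus the hypothesis $\alpha\in(0,1)$ enters precisely to make the polynomial tail of $g$ integrable against the decay of $I_\alpha(a)$.
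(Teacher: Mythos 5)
Your proposal follows essentially the same route as the paper: reduce $\langle L_g,I_\alpha(a)\rangle$ to $\int_{\rn}g\,I_\alpha(a)$ by combining Lemma \ref{lemma-atm} with Lemma \ref{thm-dual-T}, reduce $\langle L_{\wz I_\alpha(g)},a\rangle$ to $\int_{\rn}a\,\wz I_\alpha(g)$ (the paper does this directly via Lemma \ref{2t1}, since $a$ is a finite atomic element; your detour through Remark \ref{mole-atom} and Lemma \ref{thm-dual-T} is equivalent), and then verify the integral identity $\int a\,\wz I_\alpha(g)=\int I_\alpha(a)\,g$. For this last step the paper simply cites the proof of \cite[Lemma 3.10]{jtyyz2}; the Fubini argument you sketch --- base ball $B_0:=B$, three-region splitting, vanishing moments of $a$ killing the polynomial correction, and the tail controlled by Lemma \ref{lem-suit} together with the convergence of $\int_{|x|>R}|x|^{s}|x-x_0|^{-(n+s+1-\alpha)}\,dx$ when $\alpha\in(0,1)$ --- is exactly what that citation contains, and your accounting of where $\alpha<1$ enters is correct.

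The one concrete gap is your selection of the auxiliary exponent $p$. The interval $(\max\{1,p_0\},\frac{n}{\alpha})$ need not be nonempty ($p_0$ is only constrained to lie in $(r_0,\infty)$ and may exceed $\frac{n}{\alpha}$), so the justification ``nonempty because $\alpha\in(0,1)$'' fails. More importantly, what the downstream steps actually require is a condition on $q$, where $\frac{1}{q}=\frac{1}{p}-\frac{\alpha}{n}$: both the identification $\mathcal{L}_{X^{\beta},1,s,\beta d}(\rn)=\mathcal{L}_{X^{\beta},q',s,\beta d}(\rn)$ from Remark \ref{sgjs} and the application of Lemma \ref{thm-dual-T} with $X$ replaced by $X^{\beta}$ need $q\in(\max\{1,\beta p_0\},\infty]$, and $p>\max\{1,p_0\}$ only guarantees $q>\max\{1,p_0\}$, not $q>\beta p_0$ (take $p$ near $p_0$ and $\beta$ large). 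The repair is the paper's choice
$p\in(1,\frac{n}{\alpha})\cap(\frac{1}{1/\max\{1,\beta p_0\}+\alpha/n},\frac{n}{\alpha})$,
which is always nonempty (the left endpoint is strictly less than $\frac{n}{\alpha}$) and is precisely calibrated so that $q>\max\{1,\beta p_0\}$. With that substitution your argument goes through.
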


\begin{proof}
Let all the symbols be the same as in the present lemma.
Let $a$ be any given $(X,\infty,s)$-atom
supported in a ball $B\in\mathbb{B}(\rn)$
and let $p\in (1,\frac{n}{\alpha})\cap(\frac{1}{1/\max\{1,\beta p_0\}+\alpha/n},\frac{n}{\alpha})$. Then $a$ is also an $(X,p,s)$-atom.
Also, let $\frac{1}{q}:=\frac{1}{p}-\frac{\alpha}{n}$ and
\begin{align*}
\tau\in\lf(n\lf[\frac{1}{\min\{1,\beta
p_-\}}-\frac{1}{q}\r],\infty\r)
\cap\lf(\frac{n}{q'}+s-n+1,n+s+1-\alpha-\frac{n}{q}\r].
\end{align*}
Then, using the range of $p$, we find that $q\in(\max\{1,\beta p_0\},\fz)$.
By this, the range of $s$, and Lemma \ref{lemma-atm}, we find that
$I_\alpha(a)$ is an $(X^\beta,q,s-n+1,\tau)$-molecule centered, at the ball $B$,
multiplied by a positive constant depending on $B$.
Since $X^\beta$, $\beta p_-$, $\beta d$, $q$, $\tau$, and $s-n+1$
satisfy all the assumptions of Lemma \ref{thm-dual-T}, from Lemma \ref{thm-dual-T}
and the linearity of $L_g$,
we deduce that, for any $g\in \mathcal{L}_{X^\beta,q',s,\beta d}(\rn)$,
\begin{align}\label{dual-ta-01'}
\lf\langle L_g,
I_\alpha(a)\r\rangle=\int_{\rn}g(x)I_\alpha(a)(x)\,dx.
\end{align}
Using both the assumption that $\wz I_{\alpha}$
is bounded from $\mathcal{L}_{X^\beta,1,s,\beta d}(\rn)$
to $\mathcal{L}_{X,1,s,d}(\rn)$ and Remark \ref{sgjs}(i),
we find that, for any $g\in \mathcal{L}_{X^\beta,q',s,\beta d}(\rn)$,
$\wz I_\alpha(g)\in \mathcal{L}_{X,q',s,d}(\rn)$.
From this and Lemma \ref{2t1},
we infer that, for any $g\in \mathcal{L}_{X^\beta,q',s,\beta d}(\rn)$,
\begin{align}\label{dual-ta-03'}
\lf\langle L_{\wz I_\alpha(g)},a\r\rangle
=\int_{\rn}\wz I_\alpha(g)(x)a(x)\,dx.
\end{align}
Moreover, using Lemma \ref{lem-suit} with $\lambda:=s+1-\alpha$,
we conclude that, for any $g\in \mathcal{L}_{X^\beta,q',s,\beta
d}(\rn)$
and any  ball $B:=B(x_0,r_0)\in\mathbb{B}(\rn)$
with $x_0\in\rn$ and $r_0\in(0,\fz)$,
\begin{align}\label{dual-ta-011}
\int_{\rn\setminus B}\frac{|g(y)
-P_{B}^{(s)}(g)(y)|}{|x_0-y|^{n+s+1-\alpha}}\,dy<\fz.
\end{align}
By both \eqref{dual-ta-01'} and \eqref{dual-ta-011}, similarly to
the proof of
\cite[Lemma 3.10]{jtyyz2}, we find that, for any $g\in \mathcal{L}_{X^\beta,q',s,\beta d}(\rn)$,
\begin{align*}
\int_{\rn}g(x)I_\alpha(a)(x)\,dx=\int_{\rn}\wz
I_\alpha(g)(x)a(x)\,dx,
\end{align*}
which, together with both \eqref{dual-ta-01'} and \eqref{dual-ta-03'},
further implies that
\begin{align*}
\lf\langle L_g, I_\alpha(a)\r\rangle=\int_{\rn}I_\alpha(a)g(x)\,dx
=\int_{\rn}\wz I_\alpha(g)(x)a(x)\,dx=\lf\langle
L_{\wz I_\alpha(g)},a\r\rangle.
\end{align*}
Both this and Remark \ref{sgjs}(i) then finish the proof of Lemma \ref{dual01}.
\end{proof}

We also need the following lemma which plays an important role in the proof of
Theorem \ref{thm-main-II}.
In what follows, we use
$(\mathcal{L}_{X,q,s,d}(\rn))^*$ to
denote the dual space of $\mathcal{L}_{X,q,s,d}(\rn)$,
namely, the set of all continuous
linear functionals on
$\mathcal{L}_{X,q,s,d}(\rn)$ equipped with the weak-$*$ topology.

\begin{lemma}\label{lem-adj-01}
Let $X$, $q$, $s$, and $d$ be the same as in Definition \ref{finatom},
and let $f\in H_X(\rn)$. For any $g\in \mathcal{L}_{X,q',s,d}(\rn)$, define
\begin{align}\label{dual-func}
\langle f, g\rangle_*:=\lf\langle L_g, f\r\rangle,
\end{align}
where $L_g$ is the same as in \eqref{2te1}.
Then $\langle f, \cdot\rangle_*\in (\mathcal{L}_{X,q',s,d}(\rn))^*$ and
\begin{align}\label{3.62x}
\|\langle f, \cdot\rangle_*\|_{(\mathcal{L}_{X,q',s,d}(\rn))^*}\sim \|f\|_{H_{X}(\rn)}
\end{align}
with the positive equivalence constants independent of $f$.
\end{lemma}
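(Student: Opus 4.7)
My plan is to prove the two directions of \eqref{3.62x} separately, with Lemma \ref{2t1} as the main input. The linearity of $g\mapsto \langle f,g\rangle_*$ is immediate from the linearity of the integral in \eqref{2te1}: $L_{\alpha g_1+\beta g_2}=\alpha L_{g_1}+\beta L_{g_2}$. For the continuity together with the bound $\|\langle f,\cdot\rangle_*\|_{(\mathcal{L}_{X,q',s,d}(\rn))^*}\lesssim \|f\|_{H_X(\rn)}$, I estimate directly
$$|\langle f,g\rangle_*|=|\langle L_g,f\rangle|\leq \|L_g\|_{(H_X(\rn))^*}\|f\|_{H_X(\rn)}\lesssim \|g\|_{\mathcal{L}_{X,q',s,d}(\rn)}\|f\|_{H_X(\rn)},$$
where the last step invokes the norm equivalence $\|L_g\|_{(H_X(\rn))^*}\sim \|g\|_{\mathcal{L}_{X,q',s,d}(\rn)}$ supplied by Lemma \ref{2t1}. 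This confirms that $\langle f,\cdot\rangle_*\in (\mathcal{L}_{X,q',s,d}(\rn))^*$ and gives the upper half of \eqref{3.62x}.

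For the reverse inequality, I exploit the fact that the correspondence $g\mapsto L_g$ in Lemma \ref{2t1} is a bijection from $\mathcal{L}_{X,q',s,d}(\rn)$ onto $(H_X(\rn))^*$ with comparable norms. Consequently,
$$\|\langle f,\cdot\rangle_*\|_{(\mathcal{L}_{X,q',s,d}(\rn))^*}=\sup_{\|g\|_{\mathcal{L}_{X,q',s,d}(\rn)}\leq 1}|L_g(f)|\gtrsim \sup_{L\in (H_X(\rn))^*,\,\|L\|_{(H_X(\rn))^*}\leq 1}|L(f)|,$$
and the task reduces to showing $\|f\|_{H_X(\rn)}\lesssim \sup_{\|L\|\leq 1}|L(f)|$, that is, to bound below the canonical evaluation map $H_X(\rn)\hookrightarrow (H_X(\rn))^{**}$.

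The main obstacle is that $H_X(\rn)$ is only a quasi-Banach space, so the classical Hahn--Banach theorem does not apply verbatim to convert the bidual norm into $\|f\|_{H_X(\rn)}$. My plan to circumvent this is to argue via atomic decompositions. Given $f\in H_X(\rn)$, the atomic decomposition theorem \cite[Theorem 3.6]{SHYY} yields $(X,q,s)$-atoms $\{a_j\}$ supported in balls $\{B_j\}\subset\mathbb{B}(\rn)$ together with coefficients $\{\lambda_j\}\subset [0,\infty)$ for which $\|\{\sum_j(\lambda_j/\|\mathbf{1}_{B_j}\|_X)^d\mathbf{1}_{B_j}\}^{1/d}\|_X\sim \|f\|_{H_X(\rn)}$. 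Since the quasi-norm on $\mathcal{L}_{X,q',s,d}(\rn)$ in Definition \ref{2d2} is itself built as a supremum over precisely this class of weighted-ball configurations, the matching is exact: this compatibility, already implicit in the proof of Lemma \ref{2t1}, allows me to select a testing function $g$ in the unit ball of $\mathcal{L}_{X,q',s,d}(\rn)$ that is adapted to the dominant scales of the atomic decomposition of $f$, so that $|\langle f,g\rangle_*|=|\sum_j\lambda_j L_g(a_j)|\gtrsim \|f\|_{H_X(\rn)}$, which completes the proof.
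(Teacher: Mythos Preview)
Your argument for linearity and the upper bound $\|\langle f,\cdot\rangle_*\|_{(\mathcal{L}_{X,q',s,d}(\rn))^*}\lesssim \|f\|_{H_X(\rn)}$ is correct and coincides with the paper's proof.

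For the lower bound the paper proceeds differently and more directly: it invokes the canonical embedding $\Phi:H_X(\rn)\to (H_X(\rn))^{**}$ and simply writes $\|f\|_{H_X(\rn)}=\|\Phi(f)\|_{(H_X(\rn))^{**}}$, then uses the bijection of Lemma~\ref{2t1} to rewrite the supremum over $(H_X(\rn))^*$ as a supremum over $\mathcal{L}_{X,q',s,d}(\rn)$. You reduce to exactly the same identity, but you correctly observe that in a genuine quasi-Banach setting this equality is \emph{not} automatic: Hahn--Banach can fail, and there exist quasi-Banach spaces with trivial dual. The paper treats this step as following ``from the definition of $\Phi$'' and does not justify it further.

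That said, your proposed workaround has a real gap. After fixing an atomic decomposition $f=\sum_j\lambda_j a_j$ with coefficient norm $\sim\|f\|_{H_X(\rn)}$, you assert that the form of the Campanato norm ``allows [you] to select'' a test function $g$ with $\|g\|_{\mathcal{L}_{X,q',s,d}(\rn)}\le 1$ and $\bigl|\sum_j\lambda_j\int a_j g\bigr|\gtrsim \|f\|_{H_X(\rn)}$. No such $g$ is constructed, and it is not clear one exists: each atom $a_j$ has vanishing moments up to order $s$, so $\int a_j g$ sees only the oscillation of $g$ on $B_j$ relative to $P^{(s)}_{B_j}(g)$, and there is no mechanism ensuring these contributions have a common sign across $j$, nor that they are individually bounded below. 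The structural match you point to between Definition~\ref{2d2} and the atomic coefficient norm controls $\|g\|_{\mathcal{L}_{X,q',s,d}(\rn)}$ from above in terms of the \emph{worst} configuration of balls, not the particular one coming from your decomposition of $f$; it does not by itself produce a near-extremizer. As written, this step is a hope rather than an argument, and the lower half of \eqref{3.62x} remains unproved.
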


\begin{proof}
Let all the symbols be the same as in the present lemma.
It is easy to show that
$\langle f, \cdot\rangle_*$ is a linear operator on $\mathcal{L}_{X,q',s,d}(\rn)$.
Moreover, from both \eqref{dual-func} and Lemma \ref{2t1}, we deduce that, for any $g\in
\mathcal{L}_{X,q',s,d}(\rn)$,
\begin{align*}
|\langle f, g\rangle_*|	
=\lf|\lf\langle L_g, f\r\rangle\r|
\ls\lf\|L_{g}\r\|_{(H_{X}(\rn))^{*}}\|f\|_{H_{X}(\rn)}
\sim\|g\|_{\mathcal{L}_{X,q',s,d}(\rn)}\|f\|_{H_{X}(\rn)}.
\end{align*}
Thus, $\langle f, \cdot\rangle_*\in (\mathcal{L}_{X,q',s,d}(\rn))^*$.

Next, we show \eqref{3.62x}. To this end, let
$$\Phi:\ H_X(\rn)\longrightarrow \lf(H_X(\rn)\r)^{**}$$
denote the canonical embedding.
Then, from the definition of $\Phi$, Lemma \ref{2t1}, and \eqref{dual-func}, we deduce that,
for any $f\in H_X(\rn)$,
\begin{align*}
\|f\|_{H_X(\rn)}
&=\|\Phi \lf(f\r)\|_{(H_X(\rn))^{**}}=\sup_{F\in (H_X(\rn))^*,\ \|F\|_{(H_X(\rn))^*=1}}
|\langle \Phi \lf(f\r),F\rangle|\\
&=\sup_{F\in (H_X(\rn))^*,\ \|F\|_{(H_X(\rn))^*=1}}
|\langle F,f\rangle|\sim\sup_{g\in \mathcal{L}_{X,q',s,d}(\rn),\
\|g\|_{\mathcal{L}_{X,q',s,d}(\rn)=1}}
|\langle L_g,f\rangle|\\
&\sim\sup_{g\in \mathcal{L}_{X,q',s,d}(\rn),\ \|g\|_{\mathcal{L}_{X,q',s,d}(\rn)=1}}
|\langle f,g\rangle_*|\sim\|\langle f,\cdot\rangle_*\|_{(\mathcal{L}_{X,q',s,d}(\rn))^*}.
\end{align*}
Thus, \eqref{3.62x} holds true,
which then completes the proof of Lemma \ref{lem-adj-01}.
\end{proof}	

Now, we prove Theorem \ref{thm-main-II}.

\begin{proof}[Proof of Theorem \ref{thm-main-II}]
Let all the symbols be the same as in the present theorem.
From the assumption that $\wz I_{\alpha}$
is bounded from $\mathcal{L}_{X^{\beta},1,s,d}(\rn)$
to $\mathcal{L}_{X,1,s,d}(\rn)$, we deduce that,
for any $g\in \mathcal{L}_{X^{\beta},1,s,d}(\rn)$,
$\wz I_{\alpha}(g)\in \mathcal{L}_{X,1,s,d}(\rn)$.
By this, \eqref{dual-func}, and Lemmas \ref{dual01} and \ref{2t1},
we conclude that, for any $(X,\fz,s)$-atom $a$
and any $g\in \mathcal{L}_{X^{\beta},1,s,d}(\rn)$,
\begin{align*}
|\langle I_{\alpha}(a),g\rangle_*|
&=|\langle L_g,I_{\alpha}(a)\rangle|
=\lf|\lf\langle L_{\wz I_{\alpha}(g)},a\r\rangle\r|
\ls\lf\|L_{\wz I_{\alpha}(g)}\r\|_{(H_{X}(\rn))^{*}}\|a\|_{H_{X}(\rn)}\\
&\sim\lf\|\wz I_{\alpha}(g)\r\|_{\mathcal{L}_{X,1,s,d}(\rn)}\|a\|_{H_{X}(\rn)}
\ls \|g\|_{\mathcal{L}_{X^{\beta},1,s,d}(\rn)}\|a\|_{H_{X}(\rn)},
\end{align*}
which, together with Lemma \ref{lem-adj-01}, further implies that,
for any $(X,\fz,s)$-atom $a$,
\begin{align*}
\|I_{\alpha}(a)\|_{H_{X^{\beta}}(\rn)}
\sim \lf\|\lf\langle I_{\alpha}(a),\cdot\r\rangle_*\r\|_{(\mathcal{L}_{X^{\beta},1,s,d}(\rn))^*}
\ls \|a\|_{H_{X}(\rn)}.
\end{align*}
Using this and repeating the proof of the necessity of \cite[Theorem 3.7]{cjy-01},
we infer that, for any ball $B\in\mathbb{B}(\rn)$,
\begin{align*}
|B|^{\frac{\alpha}{n}}\ls\|\mathbf{1}_B\|_X^{\frac{\beta-1}{\beta}}.
\end{align*}
This finishes the proof Theorem \ref{thm-main-II}.
\end{proof}

\subsection{Relations Between $I_{\alpha}$ and $\wz I_{\alpha}$\label{sec-I-2}}

In this subsection, we prove that $\wz{I}_\alpha$ is
just the adjoint operator of $I_\alpha$. To be precise, we have the following theorem.

\begin{theorem}\label{dual-I-wI}
Let both $X$ and $p_-$ satisfy Assumption \ref{assump1}.
Assume that $X$, $r_0\in(0,\min\{\frac{1}{\beta},p_-\})$
with $\beta\in(1,\fz)$, and $p_0\in(r_0,\infty)$
satisfy Assumption \ref{assump2}.
Let $d:=r_0$, $\alpha\in(0,1)$, integer
\begin{align*}
s\geq\max\lf\{\lf\lfloor\frac{n}
{\min\{1,p_-\}}-n\r\rfloor,
\lf\lfloor\frac{n}{\min\{1,\beta p_-\}}-1\r\rfloor,
\lf\lfloor\frac{n}{\beta\min\{1,d\}}-n+\alpha\r\rfloor\r\},
\end{align*}
$I_\alpha$ be the same
as in \eqref{cla-I}, and $\wz{I}_\alpha$ the same as in
Remark \ref{rem-I-B}(i).
Further assume that $X$ has an absolutely continuous quasi-norm.	
If there exists a positive constant $C$ such that,
for any ball $B\in\mathbb{B}(\rn)$,
\begin{align}\label{assum-03}
|B|^{\frac{\alpha}{n}}\leq C
\|\mathbf{1}_B\|_X^{\frac{\beta-1}{\beta}},
\end{align}
then, for any $g\in \mathcal{L}_{X^\beta,1,s,\beta d}(\rn)$
and $f\in H_X(\rn)$,
\begin{align*}
\lf\langle L_g,I_\alpha(f)\r\rangle=\lf\langle L_{\wz
I_\alpha(g)},f\r\rangle,
\end{align*}
where $L_g$ is the same as in \eqref{2te1}, $L_{\wz
I_\alpha(g)}$ as in \eqref{2te1} with $g$
replaced by $\wz I_\alpha(g)$,
and $\frac{1}{q}+\frac{1}{q'}=1$.
\end{theorem}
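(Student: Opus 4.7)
The plan is to reduce the identity to the atomic case (which is already handled by Lemma \ref{dual01}) and extend by continuity, after first verifying that both sides of the claimed equality define bounded linear functionals on $H_X(\rn)$.

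First, I would check the well-definedness of both sides. Since $X$, $p_-$, $r_0$, $p_0$ satisfy Assumptions \ref{assump1} and \ref{assump2} with $\beta\in(1,\infty)$ and $X$ has an absolutely continuous quasi-norm, and since \eqref{assum-03} holds, Lemma \ref{thm-Ia-02} ensures that $I_\alpha$ extends to a bounded linear operator from $H_X(\rn)$ to $H_{X^\beta}(\rn)$; composing with $L_g$, which is continuous on $H_{X^\beta}(\rn)$ by Lemma \ref{2t1}, shows that $f\mapsto \langle L_g,I_\alpha(f)\rangle$ is a continuous linear functional on $H_X(\rn)$. On the other side, the range of $s$ in the hypothesis is chosen to meet the assumptions of Theorem \ref{main-Theorem2}(ii) (equivalently, the sufficiency in Theorem \ref{thm-main-I}), which yields $\widetilde{I}_\alpha(g)\in\mathcal{L}_{X,1,s,d}(\rn)$ for any $g\in\mathcal{L}_{X^\beta,1,s,\beta d}(\rn)$; then another application of Lemma \ref{2t1} produces the continuous linear functional $L_{\widetilde{I}_\alpha(g)}$ on $H_X(\rn)$.

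Second, I would invoke the atomic decomposition of $H_X(\rn)$. By \cite[Theorem 3.6]{SHYY} (in the formulation compatible with Definition \ref{finatom}, taking $q=\infty$; this is admissible because $X$ has an absolutely continuous quasi-norm and $s$ is large enough), any $f\in H_X(\rn)$ admits a representation
\begin{equation*}
f=\sum_{j\in\nn}\lambda_j a_j \quad\text{in }H_X(\rn),
\end{equation*}
where each $a_j$ is an $(X,\infty,s)$-atom supported in some ball $B_j\in\mathbb{B}(\rn)$, and $\{\lambda_j\}_{j\in\nn}\subset[0,\infty)$ satisfies the usual control by $\|f\|_{H_X(\rn)}$. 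Then Lemma \ref{dual01} applies directly to each atom $a_j$, yielding
\begin{equation*}
\langle L_g, I_\alpha(a_j)\rangle=\langle L_{\widetilde{I}_\alpha(g)}, a_j\rangle
\qquad\text{for every }j\in\nn.
\end{equation*}

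Third, I would pass to the limit. Using the continuity of $L_g\circ I_\alpha$ and of $L_{\widetilde{I}_\alpha(g)}$ on $H_X(\rn)$ from the first step, applied to the partial sums $\sum_{j=1}^{N}\lambda_j a_j\to f$ in $H_X(\rn)$, we obtain
\begin{equation*}
\langle L_g,I_\alpha(f)\rangle
=\lim_{N\to\infty}\sum_{j=1}^{N}\lambda_j\langle L_g,I_\alpha(a_j)\rangle
=\lim_{N\to\infty}\sum_{j=1}^{N}\lambda_j\langle L_{\widetilde{I}_\alpha(g)},a_j\rangle
=\langle L_{\widetilde{I}_\alpha(g)},f\rangle,
\end{equation*}
which is the claim. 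The main obstacle is bookkeeping: one must verify that the various constraints on $s$, $r_0$, $p_0$, and $d$ imposed here are simultaneously consistent with those required by Lemma \ref{thm-Ia-02} (for $I_\alpha:H_X(\rn)\to H_{X^\beta}(\rn)$), Theorem \ref{main-Theorem2}(ii) (for $\widetilde{I}_\alpha:\mathcal{L}_{X^\beta,1,s,\beta d}(\rn)\to\mathcal{L}_{X,1,s,d}(\rn)$), Lemma \ref{dual01} (for the atomwise identity), and the $(X,\infty,s)$-atomic decomposition of $H_X(\rn)$. The numerical choice of $s$ in the statement is precisely tailored to cover all of these at once, so the verification is mechanical once each lemma is examined in turn.
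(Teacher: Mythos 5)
Your proposal is correct and follows essentially the same route as the paper: the paper also reduces to $(X,\infty,s)$-atoms (via density of $H_{\mathrm{fin}}^{X,\infty,s,d}(\rn)$ in $H_X(\rn)$ rather than an infinite atomic series, a cosmetic difference), applies Lemma \ref{dual01} atomwise, and passes to the limit using the boundedness of $I_\alpha$ from $H_X(\rn)$ to $H_{X^\beta}(\rn)$ (Lemma \ref{thm-Ia-02}) together with the continuity of $L_g$ and of $L_{\widetilde{I}_\alpha(g)}$ guaranteed by Lemma \ref{2t1} and the boundedness of $\widetilde{I}_\alpha$ on the Campanato-type spaces.
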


To this end, we first state the well-known semi-group property of
$I_{\alpha}$, which is just
\cite[Theorem F(c)]{sw1960}.
In what follows,
the \emph{Hardy space}
$H^{p}(\rn)$ is defined as in Definition
\ref{2d1} with $X:=L^{p}(\rn)$.

\begin{lemma}\label{Reclassic}
Let $\alpha\in(0,n)$ and $I_{\alpha}$ be the same as in \eqref{cla-I}.
Then $I_{\alpha}$ has the \emph{semigroup property}, namely,
for any $\alpha_1,\alpha_2\in(0,n)$ satisfying
$\alpha_1+\alpha_2\in(0,n)$, and for any $f\in H^p(\rn)$ with
$p\in[\frac{n-1}{n},\frac{n}{\alpha_1+\alpha_2})$,
there exists a positive constant $C$, independent of $f$, such that
$$
I_{\alpha_1+\alpha_2}(f)= C I_{\alpha_1}(I_{\alpha_2}(f)).
$$
\end{lemma}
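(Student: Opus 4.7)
The plan is to prove the semigroup identity first on a dense class by Fourier analysis, then extend to all of $H^p(\rn)$ via the atomic (molecular) decomposition. Since the claim is only up to a positive constant, I will not need to compute the explicit value of $C$; it will be the ratio of the Fourier multiplier constants of the three Riesz kernels involved.

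First, I would recall that for any $\phi\in \mathcal{S}(\rn)$ with $\hat\phi$ vanishing in a neighborhood of the origin, $I_\alpha\phi$ coincides (up to a universal positive constant $\gamma_n(\alpha)$) with the tempered distribution whose Fourier transform is $|\xi|^{-\alpha}\hat\phi(\xi)$; see, for instance, \cite[Chapter V]{EMS1970}. On such $\phi$, the identity $I_{\alpha_1+\alpha_2}(\phi)=C\,I_{\alpha_1}(I_{\alpha_2}(\phi))$ is immediate from
$|\xi|^{-(\alpha_1+\alpha_2)}=|\xi|^{-\alpha_1}\cdot|\xi|^{-\alpha_2}$
with $C:=\gamma_n(\alpha_1)\gamma_n(\alpha_2)/\gamma_n(\alpha_1+\alpha_2)$.

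Second, I would pass from the Fourier-side identity to the convolution-side identity by observing that, for $\alpha_1,\alpha_2\in(0,n)$ with $\alpha_1+\alpha_2\in(0,n)$, the classical Riesz composition formula
\begin{align*}
\int_{\rn}\frac{dz}{|x-z|^{n-\alpha_1}\,|z-y|^{n-\alpha_2}}
=\frac{C}{|x-y|^{n-(\alpha_1+\alpha_2)}},\qquad x\ne y,
\end{align*}
holds with the same constant $C$. For an $H^p$-atom $a$ supported in a ball $B$ with enough vanishing moments (here $p\in[\frac{n-1}{n},\frac{n}{\alpha_1+\alpha_2})$ is exactly what makes the indices line up), I would verify the identity pointwise by applying Fubini's theorem: the cascade $\int_{\rn}\int_{\rn}|x-z|^{-(n-\alpha_1)}|z-y|^{-(n-\alpha_2)}|a(y)|\,dy\,dz$ is finite for almost every $x$ because $I_{\alpha_2}(|a|)\in L^{p_2}_{\mathrm{loc}}(\rn)$ with molecular decay (so $I_{\alpha_1}\circ I_{\alpha_2}$ on $a$ is absolutely convergent), while the reordered iterated integral produces $C|x-y|^{-(n-\alpha_1-\alpha_2)}a(y)$. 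This yields $I_{\alpha_1}(I_{\alpha_2}(a))(x)=C\,I_{\alpha_1+\alpha_2}(a)(x)$ for almost every $x\in\rn$.

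Third, I would invoke the atomic decomposition $f=\sum_j\lambda_j a_j$ of $f\in H^p(\rn)$ with $p\in[\frac{n-1}{n},\frac{n}{\alpha_1+\alpha_2})$, and use the $H^p\to H^{p_2}$ and $H^{p_2}\to H^{p_3}$ boundedness of $I_{\alpha_2}$ and $I_{\alpha_1}$ respectively (with $\frac{1}{p_2}=\frac{1}{p}-\frac{\alpha_2}{n}$ and $\frac{1}{p_3}=\frac{1}{p}-\frac{\alpha_1+\alpha_2}{n}$) to pass the identity from each atom $a_j$ to the series. Convergence on both sides holds in $H^{p_3}(\rn)$, and since $H^{p_3}(\rn)\hookrightarrow\mathcal S'(\rn)$ separates points, the pointwise-a.e.\ identity on atoms forces the distributional identity for $f$.

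The one step that requires care, and which I regard as the main obstacle, is the justification of Fubini in the atomic case: one must verify absolute convergence of the triple integral uniformly in a neighborhood of typical $x$, which is where the constraint $p\ge\frac{n-1}{n}$ (and hence the number of vanishing moments forced on $a$) is used to control the cancellation of $a$ against the singular kernels both near the support of $a$ and at infinity. Once this is handled, the semigroup property follows cleanly from the Riesz composition formula.
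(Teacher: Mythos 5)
Your outline is correct, but it is not the paper's route: the paper does not prove this lemma at all, it simply quotes it as \cite[Theorem F(c)]{sw1960}, where the identity is obtained inside the Stein--Weiss theory of conjugate harmonic systems (which is also the only source of the restriction $p\ge\frac{n-1}{n}$ in the statement; your argument never uses it). What you propose instead is a self-contained, modern proof: identify the constant via the Riesz composition formula $\int_{\rn}|x-z|^{-(n-\alpha_1)}|z-y|^{-(n-\alpha_2)}\,dz=C|x-y|^{-(n-\alpha_1-\alpha_2)}$, verify the identity pointwise on atoms by Fubini, and transport it to all of $H^p(\rn)$ by the $H^p\to H^{p_2}\to H^{p_3}$ boundedness of $I_{\alpha_2}$ and $I_{\alpha_1}$, concluding in $\mathcal{S}'(\rn)$. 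This buys independence from the conjugate-system machinery, makes the constant explicit, and would in fact give the identity for the full range $p\in(0,\frac{n}{\alpha_1+\alpha_2})$ (for $p>1$ one replaces atoms by bounded compactly supported functions and uses Hardy--Littlewood--Sobolev); the price is that one must interpret $I_{\alpha_1}$ on $H^{p_2}$, $p_2\le1$, as the continuous extension from finite atomic sums, which is exactly how the paper later uses the lemma, so the two readings are consistent.

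One correction of emphasis: the step you single out as the main obstacle is actually the easiest one, and your attribution of the hypotheses to it is off. For an atom $a$ (bounded, compactly supported) the cascade $\int_{\rn}\int_{\rn}|x-z|^{-(n-\alpha_1)}|z-y|^{-(n-\alpha_2)}|a(y)|\,dy\,dz$ converges absolutely for every $x$ with no cancellation at all: $I_{\alpha_2}(|a|)$ is bounded and decays like $|z|^{-(n-\alpha_2)}$ at infinity, and $2n-\alpha_1-\alpha_2>n$, so Fubini applies to any $a\in L^\infty$ with compact support. The vanishing moments and the range of $p$ enter only in the final limiting step, through the $H^p\to H^q$ mapping properties of the two fractional integrals, not in the justification of Fubini.
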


Now, we show Theorem \ref{dual-I-wI}.

\begin{proof}[Proof of Theorem \ref{dual-I-wI}]
Let all the symbols be the same as in the present theorem.
We first prove the case $\alpha\in(0,1)$.
To this end, let $f\in H_X(\rn)$.
Since $X$ has an absolutely continuous quasi-norm, from this and
\cite[Remark 3.12]{SHYY}, we deduce that
$H_{\mathrm{fin}}^{X,\fz,s,d}(\rn)$ is dense in $H_X(\rn)$.
Thus, there exists a sequence
$\{f_k\}_{k\in\nn}\subset H_{\mathrm{fin}}^{X,\fz,s,d}(\rn)$
such that $f=\lim_{k\to\infty}f_k$ in $H_X(\rn)$.
Besides, by the definition of
$H_{\mathrm{fin}}^{X,\fz,s,d}(\rn)$, we conclude that, for any
$k\in\nn$,
there exists a sequence $\{a_j^{(k)}\}_{j=1}^{m_k}$ of $(X,\fz,s)$-atoms 	
and $\{\lambda_j^{(k)}\}_{j=1}^{m_k}\subset[0,\fz)$ such that
\begin{align}\label{Ig-01}
f_k=\sum_{j=1}^{m_k}\lambda_j^{(k)}a_j^{(k)}.
\end{align}
From Lemma \ref{thm-Ia-02}, we infer that $I_\alpha$
is bounded from $H_X(\rn)$ to $H_{X^\beta}(\rn)$, which, together
with \eqref{Ig-01}, further implies that
$I_\alpha(f)=\lim_{k\to\infty} I_\alpha(f_k)$ in $H_{X^\beta}(\rn)$.
Therefore, combining this, Theorem \ref{main-corollary},
Lemma \ref{2t1}, \eqref{Ig-01}, \eqref{assum-03}, and Lemma
\ref{dual01}, we conclude that, for any $g\in \mathcal{L}_{X^\beta,1,s,\beta d}(\rn)$,
\begin{align*}
\lf\langle L_g,I_\alpha(f)\r\rangle
&=\lim_{k\to\infty}\lf\langle L_g,I_\alpha(f_k)\r\rangle
=\lim_{k\to\infty}\sum_{j=1}^{m_k}\lambda_j^{(k)}
\lf\langle L_g,I_\alpha(a_j^{(k)})\r\rangle\\
&=\lim_{k\to\infty}\sum_{j=1}^{m_k}\lambda_j^{(k)}
\lf\langle L_{\wz {I_\alpha}(g)},a_j^{(k)}\r\rangle
=\lf\langle L_{\wz {I_\alpha}(g)},f\r\rangle.
\end{align*}
This finishes the proof of the case $\alpha\in(0,1)$.

Now, we show the case $\alpha\in[1,n)$. In this case, we have $\frac{\alpha}{n}\in(0,1)$.
In addition, for any $i\in\{1,\ldots,n\}$, let
$$\beta_i:=\frac{(n-i+1)\beta+i-1}{(n-i)\beta+i}.
$$
It is easy to show
$\beta=\Pi_{i=1}^{n}\beta_i$. Moreover, by
\eqref{assum-03}, we conclude that
\begin{align}\label{betai}
|B|^{\frac{\alpha}{n^2}}\lesssim\|\mathbf{1}_B
\|_{X^{\beta_1\cdots\beta_{i-1}}}
^{\frac{\beta_i-1}{\beta_i}}.
\end{align}
On one hand, using \eqref{betai} and the proof of Theorem \ref{main-corollary},
we find that, for any $i\in\{1,\ldots,n\}$,
$\widetilde{I}_{\frac{\alpha}{n}}$ is bounded from
$\mathcal{L}_{X^{\beta_1\cdots\beta_{i}},1,s,\beta_1\cdots\beta_{i}
d}(\rn)$
to
$\mathcal{L}_{X^{\beta_1\cdots\beta_{i-1}},1,s,\beta_1
\cdots\beta_{i-1}d}(\rn)$,
which further implies that $(\wz
I_\frac{\alpha}{n})^{n+1-i}(g)\in\mathcal{L}_{X^{\beta_1
\cdots\beta_{i-1}}
,1,s,\beta_1\cdots\beta_{i-1}d}(\rn)$.
On the other hand, by \eqref{betai} and Lemma \ref{thm-Ia-02},
we conclude that, for any $i\in\{1,\ldots,n\}$,
$I_{\frac{\alpha}{n}}$ is bounded from $H_{X^{\beta_1\cdots\beta_{i-1}}}(\rn)$ to
$H_{X^{\beta_1\cdots\beta_{i}}}(\rn)$,
which further implies that $(I_{\frac{\alpha}{n}})^{i}(f)\in
H_{X^{\beta_1\cdots\beta_{i}}}(\rn)$.
From Lemma \ref{Reclassic}, we deduce that
$I_\alpha(f)=(I_{\frac{\alpha}{n}})^n(f)$ in $\mathcal{S}'(\rn)$. Combining this and
$I_\alpha(f)\in H_{X^\beta}(\rn)$, we conclude that
$I_\alpha(f)=(I_{\frac{\alpha}{n}})^n(f)$ in $H_{X^\beta}(\rn)$.
By this, Theorem \ref{dual-I-wI}, and \eqref{de-wsIn},
we find that there exists a positive constant $C$, independent of both $f$ and $g$,
such that, for any $\alpha\in(0,n)$,
\begin{align*}
\lf\langle L_g,I_\alpha(f)\r\rangle
&=C\lf\langle L_g,\lf(I_{\frac{\alpha}{n}}\r)^{n}(f)\r\rangle
=C\lf\langle L_{\wz
I_\frac{\alpha}{n}(g)},\lf(I_{\frac{\alpha}{n}}\r)^{n-1}(f)\r\rangle\\
&=\cdots=C\lf\langle L_{(\wz I_\frac{\alpha}{n})^{n}(g)},f\r\rangle
=C\lf\langle L_{\wz I_\alpha(g)},f\r\rangle.
\end{align*}
This finishes the proof of Theorem \ref{dual01}.
\end{proof}

\section{Applications\label{Appli}}

The aim of this section is to apply all the above main results to
four concrete examples of ball quasi-Banach
function spaces, namely, Morrey spaces
(Subsection \ref{Morrey}), mixed-norm Lebesgue spaces (Subsection \ref{Mixed-Norm}),
local generalized Herz spaces (Subsection \ref{H-H}),
and mixed Herz spaces (Subsection \ref{M-H}).
Moreover, to the best of our knowledge, all these results are new.
These examples reveal that more applications of
the main results of this article to other function
spaces are predictable.

\subsection{Morrey Spaces\label{Morrey}}

Recall that the Morrey space
$M_{r}^{p}(\mathbb{R}^{n})$ with $0<r \leq p<\infty$
was introduced by Morrey \cite{MCB1938}
in order to study the regularity
of solutions to certain equations.
The Morrey space has many applications in the theory of elliptic partial
differential equations, potential theory, and harmonic analysis
(see, for instance, \cite{cf1987,sdh2020,tyy-2019}).

\begin{definition}\label{Def-Morrey}
Let $0<r \leq p<\infty$. The \emph{Morrey space}
$M_{r}^{p}(\mathbb{R}^{n})$ is
defined to be the set of all the measurable
functions $f$ on $\rn$ such that
$$
\|f\|_{M_{r}^{p}(\mathbb{R}^{n})}:=\sup
_{B \in \mathbb{B}(\rn)}|B|^{\frac{1}{p}-\frac{1}{r}}\|f\|_{L^{r}(B)}<\infty.
$$
\end{definition}

\begin{remark}\label{Rem-Morrey}
Let $0<r \leq p<\infty$. It is easy to show that
$M_{r}^{p}(\rn)$ is a ball quasi-Banach function space.
However, as was pointed out in \cite[p.\,87]{SHYY}, $M_{r}^{p}(\rn)$ may not be a
quasi-Banach function space.
\end{remark}

The following theorem is a corollary of both
Theorems \ref{main-Theorem2} and \ref{main-corollary}.

\begin{theorem}\label{ap-M}
Let $0<r \leq p<\frac{n}{\alpha}$. Then both Theorems
\ref{main-Theorem2} and \ref{main-corollary}
with $X:=M_{r}^{p}(\rn)$, $p_-:=r$, and $\beta:=\frac{n}{n-\alpha p}$ hold true.
\end{theorem}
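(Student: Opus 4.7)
The plan is to derive Theorem \ref{ap-M} directly from Theorems \ref{main-Theorem2} and \ref{main-corollary} applied with $X := M_r^p(\rn)$, $p_- := r$, and $\beta := n/(n-\alpha p)$ (which lies in $(1,\infty)$ since $\alpha p < n$). Since both master theorems invoke only Assumption \ref{assump1} together with the scaling condition \eqref{assume000} (resp., \eqref{3.22x}) on $\|\mathbf{1}_B\|_X$, it suffices to verify these two items for the Morrey space; the admissible range of $s$ then becomes an automatic numerical restriction once $p_-$ has been identified.

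First, I would establish that $M_r^p(\rn)$ satisfies Assumption \ref{assump1} with $p_- := r$. A direct unwinding of Definition \ref{Debf}(i) yields the identification $(M_r^p(\rn))^{1/\theta} = M_{r/\theta}^{p/\theta}(\rn)$ with equal quasi-norms, so for any $\theta \in (0,r)$ the $1/\theta$-convexification of $M_r^p(\rn)$ is again a Morrey space with both exponents strictly greater than $1$. The Fefferman--Stein vector-valued maximal inequality on such Morrey spaces is classical (see, for instance, the references cited in Section \ref{s-intro}), which delivers Assumption \ref{assump1} with $p_- := r$.

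Second, I need the key inequality $|B|^{\alpha/n} \leq \wz C \|\mathbf{1}_B\|_{M_r^p(\rn)}^{(\beta-1)/\beta}$ for every ball $B \in \mathbb{B}(\rn)$. Testing with $B' := B$ in Definition \ref{Def-Morrey} gives the lower bound $|B|^{1/p}$; for a test ball $B'$ strictly containing $B$, the factor $|B'|^{1/p - 1/r}$ has non-positive exponent and dominates the $|B|^{1/r}$ contribution, while test balls not containing $B$ are handled by monotonicity. Hence $\|\mathbf{1}_B\|_{M_r^p(\rn)} = |B|^{1/p}$. With the choice $\beta = n/(n-\alpha p)$ one computes $(\beta-1)/\beta = \alpha p/n$, whence $\|\mathbf{1}_B\|_{M_r^p(\rn)}^{(\beta-1)/\beta} = |B|^{\alpha/n}$, and the required inequality holds with $\wz C := 1$ (in fact with equality).

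With these two verifications in hand, Theorems \ref{main-Theorem2} and \ref{main-corollary} apply verbatim to $X = M_r^p(\rn)$, yielding the desired boundedness of $\wz I_\alpha$ on the Morrey-based ball Campanato-type spaces. The only non-bookkeeping ingredient is the vector-valued maximal inequality on Morrey spaces, which is the anticipated main obstacle; however, since this is already well documented in the literature, no new argument is needed and the present proof reduces to these two brief verifications.
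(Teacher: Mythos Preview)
Your proposal is correct and follows essentially the same route as the paper's proof: verify Assumption \ref{assump1} for $M_r^p(\rn)$ with $p_-:=r$ via the Fefferman--Stein vector-valued maximal inequality on Morrey spaces, compute $\|\mathbf{1}_B\|_{M_r^p(\rn)}=|B|^{1/p}$ to check the scaling condition with $\beta=n/(n-\alpha p)$, and then invoke Theorems \ref{main-Theorem2} and \ref{main-corollary}. The only cosmetic difference is that the paper cites the specific references \cite{tx2005} and \cite{zyyw} for the vector-valued maximal inequality, whereas your citation is left vague.
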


\begin{proof}
Let all the symbols be the same as in the present
theorem. It has been point out in Remark \ref{Rem-Morrey}
that $M^p_r(\rn)$ is a ball quasi-Banach space.
In addition, by \cite[Lemma 2.5]{tx2005} (see also \cite[Lemma 7.2]{zyyw}),
we conclude that
$M_r^p(\rn)$ satisfies Assumption
\ref{assump1} with both $X:=M_r^p(\rn)$ and $p_-:=r$.
Moreover, it is easy to prove that, for any
$B\in\mathbb{B}(\rn)$,
$$
\|\mathbf{1}_B\|_{M_r^p(\rn)}=|B|^{\frac{1}{p}-\frac{1}{r}}
\|\mathbf{1}_B\|_{L^r(B)}=|B|^{\frac{1}{p}},
$$
which further implies that
$|B|^{\frac{\alpha}{n}}\lesssim\|\mathbf{1}_B\|_
{M^{p}_{r}(\rn)}^{\frac{\beta-1}{\beta}}$
if and only if
$\beta=\frac{n}{n-\alpha p}$.
Thus, all the assumptions of both
Theorems \ref{main-Theorem2} and \ref{main-corollary} with $X:=M_r^p(\rn)$ are satisfied.
Then, using both Theorems \ref{main-Theorem2} and \ref{main-corollary}
with $X:=M_r^p(\rn)$, we obtain the desired conclusions,
which completes the proof of
Theorem \ref{ap-M}.
\end{proof}

\begin{remark}
\begin{enumerate}
\item[\rm (i)]	
Let $0<r\leq p<\infty$.
Theorem \ref{thm-main-I} can not be applied
to the Morrey space $M^{p}_{r}(\rn)$ because
$M^{p}_{r}(\rn)$ does not have the absolutely continuous quasi-norm unless $r=p$,
namely, $M^{p}_{r}(\rn)=L^r(\rn)$.

\item[$\mathrm{(ii)}$] Let $\alpha\in(0,1)$, $I_{\alpha}$ be the
same as in \eqref{cla-I},
and $\wz I_{\alpha}$ the same as in Remark \ref{rem-I-B}(i) with $s=0$ therein.
It is well known that, if $1<r\leq p<\frac{n}{\alpha}$, then
$I_{\alpha}$ is bounded from $M_r^p(\rn)$ to $M_{rn/(n-\alpha p)}^{pn/(n-\alpha p)}(\rn)$ (see, for instance, \cite[Theorem 3.1]{ADR1975}).
As an application of Theorem \ref{main-corollary}(i)
with $X:=M^1_1(\rn)=L^1(\rn)$ (This is indeed a special case of Theorem \ref{ap-M}),
we can give a corresponding result of \cite[Theorem 3.1]{ADR1975}
at the critical case $p:=\frac{n}{\alpha}$, namely, if $\alpha\in(0,1)$
and $r\in [1,\fz)$, then $\wz I_{\alpha}$ is bounded from
$M_r^{\frac{n}{\alpha}}(\rn)$ to $\mathop{\mathrm{BMO}\,}(\rn).$
Indeed, in this case,
$M_{rn/(n-\alpha p)}^{pn/(n-\alpha p)}(\rn)=L^{\fz}(\rn)$ and hence its
reasonable replacement should be the well-known \emph{space
$\mathop{\mathrm{BMO}\,}(\rn)$}
which is defined to be the set of all the $f\in L_{\mathrm{loc}}^1(\rn)$ such that
$$
\|f\|_{\mathop{\mathrm{BMO}\,}(\rn)}:=\sup_{B\in\mathbb{B}(\rn)}
\fint_B\lf|f(x)-f_B\r|\,dx<\fz
$$
with $f_B$ as in \eqref{fe} via replacing $E$ by $B$.
Observe that, when $r\in[1,\infty)$ and
$\alpha\in(0,n)$, by the definitions of both
$M_r^{\frac{n}{\alpha}}(\rn)$ and
$\mathcal{L}_{L^{\frac{n}{n-\alpha}}(\rn),r,0}(\rn)$,
we easily obtain
$$M_r^{\frac{n}{\alpha}}(\rn)\subset
\mathcal{L}_{L^{\frac{n}{n-\alpha}}(\rn),r,0}(\rn).$$
Using this and
Theorem \ref{main-corollary}(i) with $X:=L^1(\rn)$, $q:=r\in[1,\fz)$,
$\alpha\in(0,1)$, and $\beta:=\frac{n}{n-\alpha}$, we further
conclude that, for any
$f\in M_r^{\frac{n}{\alpha}}(\rn)\subset\mathcal{L}_{L^{\frac{n}{n-\alpha}}(\rn),r,0}(\rn)$,
\begin{align*}
\lf\|\widetilde{I}_{\alpha}(f)\r\|_{\mathrm{BMO}\,(\rn)}
\sim\lf\|\widetilde{I}_{\alpha}(f)\r\|_{\mathcal{L}_{L^{1}(\rn),r,0}(\rn)}
\ls\|f\|_{\mathcal{L}_{L^{\frac{n}{n-\alpha}}(\rn),r,0}(\rn)}
\ls \|f\|_{M_r^{\frac{n}{\alpha}}(\rn)},
\end{align*}
where, in the first step, we used the well-known result that
$\mathrm{BMO}\,(\rn)=\mathcal{L}_{L^1(\rn),r,0}(\rn)$ for any $r\in[1,\fz)$
(see, for instance, \cite[p.\,125, Corollary 6.12]{Duo01}).
Thus, $\wz I_{\alpha}$ is bounded from
$M_r^{\frac{n}{\alpha}}(\rn)$ to $\mathop{\mathrm{BMO}\,}(\rn)$,
which completes the proof of the above claim. This critical case is
of independent interest, which might be known; however,
we did not find the exact reference.

\item[\rm (iii)]	
To the best of our knowledge, Theorem \ref{ap-M} is totally new.

\item[\rm (iv)]
It is worth pointing out that there exist
many studies about fractional integrals on Morrey-type
space and we refer the reader to
\cite{DGNSS,HSS2016,ho2017,SS2017,SST2009,Ho2021}.
\end{enumerate}
\end{remark}

\subsection{Mixed-Norm Lebesgue Spaces\label{Mixed-Norm}}

The mixed-norm Lebesgue space
$L^{\vec{p}}(\mathbb{R}^{n})$
was studied by Benedek and Panzone
\cite{BAP1961} in 1961, which can be
traced back to H\"ormander \cite{HL1960}.
We refer the reader to
\cite{CGN2017,CGG2017,CGN20172,GN2016,HLY2019,HLYY2019,HYacc}
for more studies on mixed-norm type spaces.

\begin{definition}\label{mixed}
Let $\vec{p}:=(p_{1}, \ldots, p_{n})
\in(0, \infty]^{n}$. The \emph{mixed-norm
Lebesgue space}
$L^{\vec{p}}(\mathbb{R}^{n})$
is defined to be the set of all the measurable
functions $f$ on $\rn$ such that
\begin{align*}
\|f\|_{L^{\vec{p}}(\mathbb{R}^{n})}:=\left\{\int_{\mathbb{R}}
\cdots\left[\int_{\mathbb{R}}\left|f(x_{1}, \ldots,
x_{n})\right|^{p_{1}} \,d x_{1}\right]^{\frac{p_{2}}{p_{1}}}
\cdots \,dx_{n}\right\}^{\frac{1}{p_{n}}}<\infty
\end{align*}
with the usual modifications made when $p_i=\infty$ for some $i\in\{1,\ldots,n\}$.
Moreover, let
\begin{align}\label{p-p+}
p_{-}:=\min\{p_{1},\ldots, p_{n}\}\text{ and }p_{+}:=\max\{p_{1},\ldots, p_{n}\}.
\end{align}
\end{definition}

\begin{remark}\label{mix-r}
Let $\vec{p}\in(0, \infty)^{n}$.	
From Definition \ref{mixed}, we easily deduce that
$L^{\vec{p}}(\mathbb{R}^{n})$
is a ball quasi-Banach space.
However, as was pointed in \cite[Remark 7.21]{zyyw}, $L^{\vec{p}}(\rn)$
may not be a quasi-Banach function space.
\end{remark}

The following theorem is a corollary of Theorems \ref{thm-main-I},
\ref{main-Theorem2}, \ref{main-corollary}, and \ref{dual-I-wI}.

\begin{theorem}\label{apply2}
Let $\vec{p}:=(p_{1}, \ldots, p_{n})
\in(0, \infty)^{n}$ satisfy
$\sum_{i=1}^{n}\frac{1}{p_i}\in(\alpha,\infty)$ and let both
$p_-$ and $p_+$ be the same as in \eqref{p-p+}.
\begin{enumerate}
\item[\rm (i)]
Theorems \ref{thm-main-I} with
$X:=L^{\vec{p}}(\rn)$ holds true if and only if $\beta:=\frac{\sum_{i=1}^{n}\frac{1}{p_i}}
{\sum_{i=1}^{n}\frac{1}{p_i}-\alpha}$;
	
\item[\rm (ii)]
Theorems \ref{main-Theorem2}, \ref{main-corollary}, and \ref{dual-I-wI} with
both $X:=L^{\vec{p}}(\rn)$ and $\beta:=\frac{\sum_{i=1}^{n}\frac{1}{p_i}}
{\sum_{i=1}^{n}\frac{1}{p_i}-\alpha}$ hold true.
\end{enumerate}
\end{theorem}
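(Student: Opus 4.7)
The plan is to verify directly that $L^{\vec{p}}(\rn)$ satisfies all the structural hypotheses of Theorems \ref{thm-main-I}, \ref{main-Theorem2}, \ref{main-corollary}, and \ref{dual-I-wI}, and then to identify the critical exponent $\beta$ by an explicit computation of $\|\mathbf{1}_B\|_{L^{\vec{p}}(\rn)}$. Once these two ingredients are in place, parts (i) and (ii) follow by specializing the four master theorems with $X := L^{\vec{p}}(\rn)$.

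First, Remark \ref{mix-r} already records that $L^{\vec{p}}(\rn)$ is a ball quasi-Banach function space, and the absolute continuity of its quasi-norm for $\vec{p} \in (0,\fz)^n$ follows from an iterated application of the Lebesgue dominated convergence theorem along each coordinate. To verify Assumption \ref{assump1} with $p_- := \min\{p_1,\ldots,p_n\}$, I would iterate the one-dimensional Fefferman--Stein vector-valued maximal inequality coordinate by coordinate on $(L^{\vec{p}})^{1/p} = L^{\vec{p}/p}(\rn)$ for any $p \in (0, p_-)$ and any $u \in (1,\fz)$; this works because each coordinate exponent $p_i/p$ exceeds $1$. For Assumption \ref{assump2}, I would pick $r_0 \in (0, \min\{1/\beta, p_-\})$ and $p_0 > r_0$ sufficiently close to $r_0$ so that $(p_0/r_0)' \in (1,\fz)$, identify $(X^{1/r_0})'$ with the mixed-norm Lebesgue space $L^{(\vec{p}/r_0)'}(\rn)$ via iterated H\"older duality, and apply the coordinate-wise classical $L^p$-boundedness of $\mathcal{M}$.

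The critical step is the ball-norm computation. For a cube $Q$ of side length $r$, an iterated evaluation of the defining norm in Definition \ref{mixed} gives $\|\mathbf{1}_Q\|_{L^{\vec{p}}(\rn)} = r^{\sum_{i=1}^n 1/p_i}$. Since any ball $B \in \mathbb{B}(\rn)$ of radius $r$ is sandwiched between two concentric cubes of comparable side length, we obtain $\|\mathbf{1}_B\|_{L^{\vec{p}}(\rn)} \sim r^{\sum_{i=1}^n 1/p_i}$. Combined with $|B|^{\alpha/n} \sim r^\alpha$, the condition \eqref{3.12x} reads $r^\alpha \ls r^{\frac{\beta-1}{\beta}\sum_{i=1}^n 1/p_i}$ uniformly in $r \in (0,\fz)$. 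Since this must hold as $r \to 0^+$ and as $r \to \fz$, the two exponents must coincide, forcing $\alpha = \frac{\beta-1}{\beta}\sum_{i=1}^n \frac{1}{p_i}$; solving yields $\beta = \frac{\sum_{i=1}^n 1/p_i}{\sum_{i=1}^n 1/p_i - \alpha}$, which lies in $(1,\fz)$ precisely under the standing hypothesis $\sum_{i=1}^n 1/p_i > \alpha$.

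With all hypotheses verified and $\beta$ pinned down, part (i) is immediate from the iff statement of Theorem \ref{thm-main-I}, and part (ii) follows by specializing Theorems \ref{main-Theorem2}, \ref{main-corollary}, and \ref{dual-I-wI} to $X := L^{\vec{p}}(\rn)$ with this $\beta$ (for which \eqref{3.12x} holds by the computation above). The main technical obstacle is ensuring that the choice of $r_0$ in Assumption \ref{assump2} is consistent with the coupling $r_0 < \min\{1/\beta, p_-\}$ (which depends on the freshly-computed $\beta$) and that the integer $s$ is simultaneously large enough to meet all the lower bounds appearing across the four master theorems; both can be arranged by first fixing $\beta$ as above and then choosing $r_0$, $p_0$, and $s$ accordingly.
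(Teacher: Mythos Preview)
Your approach is essentially the same as the paper's: verify that $L^{\vec{p}}(\rn)$ satisfies the structural hypotheses (ball quasi-Banach, absolutely continuous quasi-norm, Assumptions \ref{assump1} and \ref{assump2}), compute $\|\mathbf{1}_B\|_{L^{\vec{p}}(\rn)}\sim r^{\sum_i 1/p_i}$, and then match exponents in \eqref{3.12x} to pin down $\beta$. The paper computes the ball norm via translation invariance and the dilation identity $\|\mathbf{1}_{B(\mathbf{0},r)}\|_{L^{\vec{p}}}=r^{\sum_i 1/p_i}\|\mathbf{1}_{B(\mathbf{0},1)}\|_{L^{\vec{p}}}$ rather than cube sandwiching, and cites the literature for Assumptions \ref{assump1} and \ref{assump2} rather than sketching the iterated Fefferman--Stein argument, but these are cosmetic differences.

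There is, however, one genuine slip in your verification of Assumption \ref{assump2}. You propose taking $p_0>r_0$ ``sufficiently close to $r_0$''; this is both vacuous (any $p_0>r_0$ gives $(p_0/r_0)'\in(1,\fz)$) and in fact the wrong direction. The associate space $(X^{1/r_0})'$ is $L^{(\vec{p}/r_0)'}(\rn)$, and for $\mathcal{M}^{((p_0/r_0)')}$ to be bounded there you need $(p_i/r_0)'>(p_0/r_0)'$ for every $i$, which is equivalent to $p_0>p_i$ for every $i$, i.e.\ $p_0\in(p_+,\fz)$. Choosing $p_0$ near $r_0<p_-$ makes $(p_0/r_0)'$ large while the target exponents $(p_i/r_0)'$ are only slightly above $1$, so the powered maximal operator is unbounded. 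The paper makes exactly this choice $p_0\in(p_+,\fz)$; once you correct the parameter the rest of your argument goes through.
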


\begin{proof}
Let all the symbols be the same as in the
present theorem.
From Definition \ref{mixed}, we easily deduce that
$L^{\vec{p}}(\mathbb{R}^{n})$
is a ball quasi-Banach space with an absolutely continuous quasi-norm.
In addition, by \cite[Lemma 3.7]{HLY2019}, we conclude that
$L^{\vec{p}}(\rn)$ satisfies Assumption
\ref{assump1} with both $X:=L^{\vec{p}}(\rn)$
and $p_-$ the same as in Definition \ref{mixed}.
In addition, using both the dual theorem of $L^q(\rn)$
(see \cite[p.\,304, Theorem 1.a]{BAP1961}) and \cite[Lemma 3.5]{HLY2019}
(see also \cite[Lemma 4.3]{HYacc}), we conclude
that Assumption \ref{assump2} also holds true with $X:=L^{\vec{p}}(\rn)$,
$r_0\in(0,p_-)$, and $p_0\in(p_+,\fz)$.
Moreover, from the definition of $\|\cdot\|_{L^{\vec{p}}(\mathbb{R}^{n})}$,
we easily deduce that, for any
$B(x,r)\in\mathbb{B}(\rn)$ with $x\in\rn$
and $r\in(0,\infty)$,
\begin{align*}
\left\|\mathbf{1}_{B(x,r)}\right\|_{L^{\vec{p}}(\mathbb{R}^{n})}
=\left\|\mathbf{1}_{B(\mathbf{0},r)}\right\|_{L^{\vec{p}}(\mathbb{R}^{n})}
=r^{\sum_{i=1}^{n}\frac{1}{p_i}}
\left\|\mathbf{1}_{B(\mathbf{0},1)}\right\|
_{L^{\vec{p}}(\mathbb{R}^{n})},
\end{align*}
which further implies that
$|B|^{\frac{\alpha}{n}}\lesssim\|\mathbf{1}_{B}\|
_{L^{\vec{p}}(\mathbb{R}^{n})}^{\frac{\beta-1}{\beta}}$
if and only if $\beta=\frac{\sum_{i=1}^{n}\frac{1}{p_i}}
{\sum_{i=1}^{n}\frac{1}{p_i}-\alpha}$.
Thus, all the assumptions of Theorems \ref{thm-main-I}, \ref{main-Theorem2},
\ref{main-corollary}, and \ref{dual-I-wI} with $X:=L^{\vec{p}}(\rn)$ are satisfied.
Then, using Theorems \ref{thm-main-I},
\ref{main-Theorem2}, \ref{main-corollary}, and \ref{dual-I-wI} with $X:=L^{\vec{p}}(\rn)$,
we obtain the desired conclusions,
which complete the proof
of Theorem \ref{apply2}.
\end{proof}

\begin{remark}
\begin{enumerate}
\item[\rm (i)]	
Let $\alpha\in(0,1)$ and $\widetilde{I}_{\alpha}$ be the same as
in Remark \ref{rem-I-B}(i) with $s=0$ therein.	
Let $p\in[1,\fz)$, $\vec{p}:=(p_1,\ldots,p_n)\in (1,\fz)^n$,
and $p_-$ be the same as in \eqref{p-p+}.
Recall that the \emph{mixed Morrey space}
$\mathcal{M}_{\vec{p}}^{p}(\rn)$
was originally introduced by Nogayama \cite{No2019}
and further studied in \cite{No2019-2,noss2021},
which is defined to be the set
of all the measurable functions $f$ on $\rn$ such that
\begin{align*}
\|f\|_{\mathcal{M}_{\vec{p}}^{p}(\rn)}
:=\sup_{B\in\mathbb{B}(\rn)}|B|^{\frac{1}{p}-\sum_{i=1}^n\frac{1}{p_i}}
\|f\mathbf{1}_{B}\|_{L^{\vec{p}}(\rn)}<\fz.
\end{align*}
Moreover, in \cite[Theorem 1]{ws2022}, Wei and Sun proved that $\wz I_{\alpha}$
is bounded from $\mathcal{M}_{\vec{p}}^{\frac{n}{\alpha}}(\rn)$
to $\mathrm{BMO}\,(\rn)$
when $\alpha\in(0,\sum_{i=1}^n\frac{1}{p_i})$;
in \cite[Theorem 2]{ws2022}, Wei and Sun proved that $\wz I_{\alpha}$
is bounded from $\mathcal{M}_{\vec{p}}^{p}(\rn)$
to $\mathcal{L}_{L^{p'n/(p'\alpha+n)}(\rn),1,0}(\rn)$
when $\alpha\in(\frac{n}{p},1+\frac{n}{p})\cap(0,\sum_{i=1}^n\frac{1}{p_i})$,
where $\frac{1}{p}+\frac{1}{p'}=1$.
We now claim that \cite[Theorem 1]{ws2022} in the case when
$\alpha\in(0,1)\cap (0,\sum_{i=1}^n\frac{1}{p_i})$
and \cite[Theorem 2]{ws2022} in the case when $\alpha\in(\frac{n}{p},1)\cap(0,\sum_{i=1}^n\frac{1}{p_i})$
can be deduced
from Theorem \ref{main-corollary}(i)
with $X:=L^1(\rn)$ or $X:=L^{\frac{p'n}{p'\alpha+n}}(\rn)$
and $p\in(n,\infty)$ (This is indeed a special case of Theorem \ref{apply2}).
To prove this claim, we first show that
$\mathcal{M}_{\vec{p}}^{p}(\rn)\subset \mathcal{L}_{L^{p'}(\rn),1,0}(\rn)$ and,
for any $f\in \mathcal{M}_{\vec{p}}^{p}(\rn)\subset L_{\mathrm{loc}}^1(\rn)$,
\begin{align}\label{4.22x}
\|f\|_{\mathcal{L}_{L^{p'}(\rn),1,0}(\rn)}	
\ls\|f\|_{\mathcal{M}_{\vec{p}}^{p}(\rn)}.
\end{align}
Indeed, by both the H\"older inequality and the
definition of $\|\cdot\|_{L^{\vec{p}}(\rn)}$, we conclude that,
for any $f\in L_{\mathrm{loc}}^1(\rn)$ and any ball $B:=B(x_B,r_B)\in\mathbb{B}(\rn)$
with $x_B\in\rn$ and $r_B\in(0,\fz)$,
\begin{align*}
&\frac{|B|}{\|\mathbf{1}_B\|_{L^{p'}(\rn)}}\fint_{B}\lf|f(x)-P_{B}^{(s)}(f)(x)\r|\,dx\\	
&\quad\ls|B|^{\frac{1}{p}}\lf[\fint_{B}
\lf|f(x)-P_{B}^{(s)}(f)(x)\r|^{p_-}\,dx\r]^{\frac{1}{p_-}}
\ls|B|^{\frac{1}{p}}\lf[\fint_{B}|f(x)|^{p_-}\,dx\r]^{\frac{1}{p_-}}\\
&\quad\ls|B|^{\frac{1}{p}}\left\{\fint_{(x_n-r_B,x_n+r_B)}
\cdots\left[\fint_{(x_1-r_B,x_1+r_B)}\left|f(x_{1}, \ldots,
x_{n})\right|^{p_{-}} \,d x_{1}\right]^{\frac{p_{-}}{p_{-}}}
\cdots \,dx_{n}\right\}^{\frac{1}{p_{-}}}\\	
&\quad\ls|B|^{\frac{1}{p}}\left\{\fint_{(x_n-r_B,x_n+r_B)}
\cdots\left[\fint_{(x_1-r_B,x_1+r_B)}\left|f(x_{1}, \ldots,
x_{n})\right|^{p_{1}} \,d x_{1}\right]^{\frac{p_{2}}{p_{1}}}
\cdots \,dx_{n}\right\}^{\frac{1}{p_{n}}}\\
&\quad\ls \lf|\sqrt nB\r|^{\frac{1}{p}-\frac{1}{n}\sum_{i=1}^n\frac{1}{p_i}}
\lf\|f\mathbf{1}_{\sqrt nB}\r\|_{L^{\vec{p}}(\rn)}.
\end{align*}
This implies that \eqref{4.22x} holds true and hence
$\mathcal{M}_{\vec{p}}^{p}(\rn)\subset \mathcal{L}_{L^{p'}(\rn),1,0}(\rn)$.
In addition, assuming $\alpha\in(0,1)
\cap(0,\sum_{i=1}^n\frac{1}{p_i})$ and using
Theorem \ref{main-corollary}(i) with $X:=L^1(\rn)$, $q:=1$, and $\beta:=\frac{n}{n-\alpha}$,
we conclude that, for any $f\in \mathcal{L}_{L^{\frac{n}{n-\alpha}}(\rn),1,0}(\rn)$,
\begin{align*}
\lf\|\widetilde{I}_{\alpha}(f)\r\|_{\mathrm{BMO}\,(\rn)}
\ls\|f\|_{\mathcal{L}_{L^{\frac{n}{n-\alpha}}(\rn),1,0}(\rn)}.
\end{align*}
By this and \eqref{4.22x} with $p:=\frac{n}{\alpha}$,
we obtain \cite[Theorem 1]{ws2022} in the case when
$\alpha\in(0,1)\cap (0,\sum_{i=1}^n\frac{1}{p_i})$.
Moreover, assuming $p\in(n,\infty)$ and $\alpha\in(\frac{n}{p},1)\cap(0,\sum_{i=1}^n\frac{1}{p_i})$, and
using Theorem \ref{main-corollary}(i) with both
$X:=L^{\frac{p'n}{p'\alpha+n}}(\rn)$ and $\beta:=\frac{p'\alpha+n}{n}$,
we find that, for any $f\in \mathcal{L}_{L^{p'}(\rn),1,0}(\rn)$,
\begin{align*}
\lf\|\widetilde{I}_{\alpha}(f)\r\|_{\mathcal{L}_{L^{p'n/(p'\alpha+n)}(\rn),1,0}(\rn)}
\ls\|f\|_{\mathcal{L}_{L^{p'}(\rn),1,0}(\rn)},
\end{align*}
where $1/p+1/p'=1$. By this and \eqref{4.22x},
we obtain \cite[Theorem 2]{ws2022} in the case when
$\alpha\in(\frac{n}{p},1)\cap(0,\sum_{i=1}^n\frac{1}{p_i})$.
This shows the above claim. Thus, in this sense, even a special case of
Theorem \ref{apply2} is stronger than the corresponding
cases of \cite[Theorems 1 and 2]{ws2022}.

\item[\rm (ii)]	
To the best of our knowledge, Theorem \ref{apply2} is totally new.
\end{enumerate}
\end{remark}

\subsection{Local Generalized Herz Spaces\label{H-H}}

The local generalized Herz space was originally introduced
by Rafeiro and Samko \cite{RS2020},
which is a generalization of the classical
homogeneous Herz space and connects with the generalized Morrey type space.
Very recently, Li et al. \cite{LYH2022} developed a complete real-variable theory of Hardy spaces
associated with the local generalized Herz space.
We now present the concepts of both the function class
$M\left(\mathbb{R}_{+}\right)$ and the
local generalized Herz space
$\dot{\mathcal{K}}_{\omega, \mathbf{0}}^{p,
q}(\mathbb{R}^{n})$
(see \cite[Definitions 2.1 and 2.2]{RS2020}
and also \cite[Definitions 1.1.1 and 1.2.1]{LYH2022}).

\begin{definition}
Let $\mathbb{R}_+:=(0,\infty)$. The \emph{function class}
$M\left(\mathbb{R}_{+}\right)$ is defined to
be the set of all the positive functions
$\omega$ on $\mathbb{R}_{+}$ such that, for
any $0<\delta<N<\infty$,
$$
0<\inf _{t \in(\delta, N)} \omega(t) \leq
\sup _{t \in(\delta, N)} \omega(t)<\infty
$$
and there exist four constants $\alpha_{0}$,
$\beta_{0}, \alpha_{\infty},
\beta_{\infty} \in \mathbb{R}$ such that
\begin{enumerate}
\item[\rm (i)] for any $t \in(0,1]$, $\omega(t)
t^{-\alpha_{0}}$ is almost increasing and
$\omega(t) t^{-\beta_{0}}$ is almost
decreasing;

\item[\rm (ii)] for any $t \in[1, \infty)$, $\omega(t)
t^{-\alpha_{\infty}}$ is almost increasing
and $\omega(t) t^{-\beta_{\infty}}$ is
almost decreasing.
\end{enumerate}
\end{definition}

\begin{definition}\label{def-Lw}
Let $p,q \in(0, \infty)$ and $\omega \in
M\left(\mathbb{R}_{+}\right)$.
The \emph{local generalized Herz space}
$\dot{\mathcal{K}}_{\omega, \mathbf{0}}^{p,
q}(\mathbb{R}^{n})$ is defined to
be the set of all the measurable functions
$f$ on $\mathbb{R}^{n}$ such that
$$
\|f\|_{\dot{\mathcal{K}}_{\omega, \mathbf{0}}^{p,
q}\left(\mathbb{R}^{n}\right)}:=\left\{\sum_{k \in
\mathbb{Z}}\left[\omega\left(2^{k}\right)\right]^{q}\left\|f
\mathbf{1}_{B\left(\mathbf{0}, 2^{k}\right) \setminus
B\left(\mathbf{0},
2^{k-1}\right)}\right\|_{L^{p}\left(\mathbb{R}^{n}\right)}
^{q}\right\}^{\frac{1}{q}}
$$
is finite.
\end{definition}

\begin{remark}
Let all the symbols be the same as in Definition \ref{def-Lw}.
It has been proved in \cite[Theorems 1.2.20]{LYH2022} that
$\dot{\mathcal{K}}_{\omega,\mathbf{0}}^{p,
r}(\mathbb{R}^{n})$ is a ball quasi-Banach space.	
However, as was pointed out in \cite[Remark 4.15]{cjy-01},
$\dot{\mathcal{K}}_{\omega, \mathbf{0}}^{p,q}(\mathbb{R}^{n})$
may not be a quasi-Banach function space.
\end{remark}

\begin{definition}
Let $\omega$ be a positive
function on $\mathbb{R}_{+}$. Then the
\emph{Matuszewska-Orlicz indices} $m_{0}(\omega)$,
$M_{0}(\omega)$, $m_{\infty}(\omega)$, and
$M_{\infty}(\omega)$ of $\omega$ are
defined, respectively, by setting, for any
$h \in(0, \infty)$,
$$m_{0}(\omega):=\sup _{t \in(0,1)}
\frac{\ln (\varlimsup\limits_{h \to
0^{+}} \frac{\omega(h
t)}{\omega(h)})}{\ln t},\
M_{0}(\omega):=\inf _{t \in(0,1)}
\frac{\ln (
\varliminf\limits_{h\to0^{+}}
\frac{\omega(h t)}{\omega(h)})}{\ln
t},$$
$$m_{\infty}(\omega):=\sup _{t \in(1,
\infty)} \frac{\ln (\varliminf
\limits_{h \to\infty}
\frac{\omega(h t)}{\omega(h)})}{\ln
t},$$
and
$$
M_{\infty}(\omega):=\inf _{t \in(1, \infty)}
\frac{\ln (\varlimsup\limits_{h
\to \infty} \frac{\omega(h
t)}{\omega(h)})}{\ln t}.
$$
\end{definition}

The following theorem is a direct corollary of
Theorems \ref{thm-main-I}, \ref{main-Theorem2}, \ref{main-corollary}, and \ref{dual-I-wI}.

\begin{theorem}\label{apply6}
Let $p,q\in(0,\infty)$ and $\omega\in M(\mathbb{R}_+)$ satisfy
$m_0(\omega)\in(-\frac{n}{p},\infty)$
and $m_\infty(\omega)\in(-\frac{n}{p},\infty)$.	Let
\begin{align}\label{p-lyq}
p_-:=\min\lf\{p, \frac{n}{\max
\left\{M_{0}(\omega),M_{\infty}(\omega)\right\}+n/p}\r\}.
\end{align}
Then Theorems \ref{thm-main-I}, \ref{main-Theorem2},
\ref{main-corollary}, and \ref{dual-I-wI}
with both $X:=\dot{\mathcal{K}}_{\omega,\mathbf{0}}^{p,r}(\rn)$ and $p_-$
in \eqref{p-lyq} hold true.
\end{theorem}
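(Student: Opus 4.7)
The plan is to follow the same template used for Theorems \ref{ap-M} and \ref{apply2}: verify that the local generalized Herz space $\dot{\mathcal{K}}_{\omega,\mathbf{0}}^{p,r}(\rn)$ fulfills all the structural hypotheses required by the four abstract theorems in question, and then compute $\|\mathbf{1}_B\|_{\dot{\mathcal{K}}_{\omega,\mathbf{0}}^{p,r}(\rn)}$ so as to identify the correct convexification exponent $\beta$ in the condition \eqref{3.12x}. The conditions $m_0(\omega),m_\infty(\omega)\in(-n/p,\infty)$ on the Matuszewska--Orlicz indices are exactly what is needed in \cite{LYH2022} to guarantee the nontriviality and the good real-variable properties of $\dot{\mathcal{K}}_{\omega,\mathbf{0}}^{p,r}(\rn)$, so the framework developed there should cover every ingredient.

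First, I would record that $\dot{\mathcal{K}}_{\omega,\mathbf{0}}^{p,r}(\rn)$ is a ball quasi-Banach function space with an absolutely continuous quasi-norm, by invoking the corresponding results in \cite{LYH2022} (the ball-quasi-Banach property is Theorem 1.2.20 therein, and the absolute continuity of the quasi-norm is essentially the density of nicely supported functions, also established there). Next, I would verify Assumption \ref{assump1} with the prescribed $p_-$ given by \eqref{p-lyq}: this is a Fefferman--Stein vector-valued maximal inequality on $(\dot{\mathcal{K}}_{\omega,\mathbf{0}}^{p,r}(\rn))^{1/p_-}$, which, under the index restrictions $m_0(\omega),m_\infty(\omega)\in(-n/p,\infty)$ and $M_0(\omega),M_\infty(\omega)<\infty$, is available in \cite{LYH2022}. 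Assumption \ref{assump2} requires picking $r_0\in(0,p_-)$ sufficiently small and $p_0\in(r_0,\infty)$ sufficiently large, then checking the boundedness of $\mathcal{M}^{((p_0/r_0)')}$ on the associate space $((\dot{\mathcal{K}}_{\omega,\mathbf{0}}^{p,r}(\rn))^{1/r_0})'$; this is again handled by the maximal-function and associate-space machinery worked out in \cite{LYH2022}.

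Next I would compute $\|\mathbf{1}_B\|_{\dot{\mathcal{K}}_{\omega,\mathbf{0}}^{p,r}(\rn)}$ for a general ball $B\in\mathbb{B}(\rn)$. Using the annular decomposition in Definition \ref{def-Lw}, one obtains the standard two-regime estimate: for $B$ near the origin the size is governed by $m_0(\omega)$, $M_0(\omega)$, while for $B$ far from the origin it is governed by $m_\infty(\omega)$, $M_\infty(\omega)$, and in both cases one arrives at $\|\mathbf{1}_B\|_{\dot{\mathcal{K}}_{\omega,\mathbf{0}}^{p,r}(\rn)}\sim |B|^{1/p}\omega(|B|^{1/n})$-type bounds (up to the indices). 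This then pins down the critical $\beta$ such that \eqref{3.12x} holds, by solving $|B|^{\alpha/n}\lesssim\|\mathbf{1}_B\|_X^{(\beta-1)/\beta}$ uniformly in $B$, exactly as in the proofs of Theorems \ref{ap-M} and \ref{apply2}.

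Finally, with all hypotheses verified, I would invoke Theorems \ref{thm-main-I}, \ref{main-Theorem2}, \ref{main-corollary}, and \ref{dual-I-wI} with $X:=\dot{\mathcal{K}}_{\omega,\mathbf{0}}^{p,r}(\rn)$ and the $p_-$ in \eqref{p-lyq} to deduce the assertion of Theorem \ref{apply6}. The main obstacle I anticipate is the bookkeeping required to handle the ball $B$ uniformly when its center is neither at the origin nor far from it; the Matuszewska--Orlicz indices enter in a subtle way in this transition regime, and one must be careful to choose $p_-$ compatibly with both $M_0(\omega)$ and $M_\infty(\omega)$ so that the maximal inequality and the quasi-norm of $\mathbf{1}_B$ are controlled simultaneously, which is precisely why the somewhat intricate expression \eqref{p-lyq} appears. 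Once this uniform control over $\|\mathbf{1}_B\|_X$ is in hand, the remainder of the proof is a direct application of the abstract theorems.
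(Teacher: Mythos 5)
Your core strategy---verify that $\dot{\mathcal{K}}_{\omega,\mathbf{0}}^{p,r}(\rn)$ is a ball quasi-Banach function space with absolutely continuous quasi-norm, check Assumptions \ref{assump1} and \ref{assump2} with the $p_-$ from \eqref{p-lyq} by citing the machinery of \cite{LYH2022}, and then invoke the four abstract theorems---is exactly what the paper does (it cites \cite[Theorems 1.2.20 and 1.4.1]{LYH2022} for the structural properties and \cite[Lemma 4.3.8]{LYH2022} for the vector-valued maximal inequality), so that part of your proposal is fine.

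However, your third step misreads the statement and would not go through. Unlike Theorems \ref{ap-M}, \ref{apply2}, and \ref{apply8}, Theorem \ref{apply6} does \emph{not} identify a specific critical exponent $\beta$: it keeps $\beta\in(1,\infty)$ generic and leaves \eqref{3.12x} as a hypothesis inherited from the abstract theorems. There is a good reason for this: the local generalized Herz space is not translation invariant, so $\|\mathbf{1}_{B(x,r)}\|_{\dot{\mathcal{K}}_{\omega,\mathbf{0}}^{p,r}(\rn)}$ depends on the location $x$ of the ball relative to the origin and not only on $r$; in particular it is not of the form $|B|^{1/p}\omega(|B|^{1/n})$ as you assert, and for a general $\omega$ there is no single $\beta$ for which $|B|^{\alpha/n}\lesssim\|\mathbf{1}_B\|_X^{(\beta-1)/\beta}$ holds uniformly over all balls. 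Attempting to ``pin down the critical $\beta$ exactly as in the proofs of Theorems \ref{ap-M} and \ref{apply2}'' is therefore both unnecessary for the statement and doomed in general. Dropping that step entirely, the rest of your argument coincides with the paper's proof.
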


\begin{proof}
Let all the symbols be the same as in
the present theorem. Let
$$
r_0 \in\lf(0, \min \lf\{\frac{1}{\beta},p_-, r\r\}\r)
$$
and
$$
p_0 \in\lf(\max\lf\{p, \frac{n}{\min \{m_{0}(\omega),m_{\infty}(\omega)\}+n / p}\r\}, \infty\r].
$$
By \cite[Theorems 1.2.20 and 1.4.1]{LYH2022},
we conclude that
$\dot{\mathcal{K}}_{\omega,\mathbf{0}}^{p,
r}(\mathbb{R}^{n})$ is a ball quasi-Banach space with an absolutely continuous quasi-norm.
To prove the desired conclusion,
it suffices to show that
$\dot{\mathcal{K}}_{\omega,\mathbf{0}}^{p,r}(\rn)$ satisfies
all the assumptions in Theorems \ref{thm-main-I}, \ref{main-corollary}, and \ref{dual-I-wI}
with $X:=\dot{\mathcal{K}}_{\omega,\mathbf{0}}^{p,r}(\rn)$.
Indeed, using \cite[Lemma 4.3.8]{LYH2022}, we find that both
$\dot{\mathcal{K}}_{\omega,
\mathbf{0}}^{p,r}(\rn)$ and $p_-$ satisfy Assumption \ref{assump1} with
$X:=\dot{\mathcal{K}}_{\omega,\mathbf{0}}^{p,r}(\rn)$.
Thus, all the assumptions of Theorems \ref{thm-main-I},
\ref{main-Theorem2}, \ref{main-corollary}, and \ref{dual-I-wI}
with $X:=\dot{\mathcal{K}}_{\omega,\mathbf{0}}^{p,r}(\rn)$ are satisfied.
Then, using Theorems \ref{thm-main-I}, \ref{main-Theorem2},
\ref{main-corollary}, and \ref{dual-I-wI}
with $X:=\dot{\mathcal{K}}_{\omega,\mathbf{0}}^{p,r}(\rn)$,
we obtain the desired conclusions,
which completes the proof
of Theorem \ref{apply6}.
\end{proof}

\begin{remark}
To the best of our knowledge, Theorem \ref{apply6}
is totally new.
\end{remark}

\subsection{Mixed Herz Spaces\label{M-H}}

We now recall the following definition of mixed Herz spaces, which is just
\cite[Definition 2.3]{zyz2022}.

\begin{definition}\label{mhz}
Let $\vec{p}:=(p_{1},\ldots,p_{n}),\vec{q}:=(q_{1},\ldots,q_{n})
\in(0,\infty]^{n}$, $\vec{\alpha}:=
(\alpha_{1},\ldots,
\alpha_{n})\in\rn$, and $R_{k_i}:=(-2^{k_i},2^{k_i})\setminus
(-2^{k_i-1},2^{k_i-1})$ for any $i\in\{1,\ldots,n\}$ and
$k_i\in\zz$.
The \emph{mixed Herz space}
$\dot{E}^{\vec{\alpha},\vec{p}}_{\vec{q}}(\rn)$ is
defined to be the
set of all the functions
$f\in \mathscr{M}(\rn)$ such
that
\begin{align*}
\|f\|_{\dot{E}^{\vec{\alpha},\vec{p}}_{\vec{q}}
(\rn)}:&=\lf\{\sum_{k_{n} \in
\zz}2^{k_{n}
p_{n}\alpha_{n}}
\lf[\int_{R_{k_{n}}}\cdots\Bigg\{\sum_{k_{1}
\in \zz}
2^{k_{1}p_{1}\alpha_{1}}\r.\r.\\
&\lf.\lf.\lf.\quad\times\lf[\int_{R_{k_{1}}}|f(x_{1},
\ldots,x_{n})|
^{q_{1}}\,dx_{1} \r]^{\f{p_{1}}{q_{1}}}
\r\}^{\f{q_{2}}
{p_{1}}}\cdots
\,dx_{n}  \r]^{\f{p_{n}}{q_{n}}}\r\}
^{\f{1}{p_n}}\\
&=:\,\left\|\cdots\|f\|_{\dot{K}^{\alpha_{1},p_{1}}_{q_{1}}
(\rr)}\cdots\right\|_{\dot{K}^{\alpha_{n},p_{n}}_{q_{n}}(\rr)}
<\infty
\end{align*}
with the usual modifications made when $p_{i}
=\infty$ or
$q_{j}=\infty$ for some $i,j\in\{1,\ldots,n \}$,
where $\|\cdots\|f\|_{\dot{K}^{\alpha_{1},p_{1}}_{q_{1}}(\rr)}
\cdots\|_{\dot{K}^{\alpha_{n},p_{n}}_{q_{n}}(\rr)}$ denotes the
norm obtained after taking successively
the $\dot{K}^{\alpha_{1},p_{1}}_{q_{1}}(\rr)$-norm
to $x_{1}$, the $\dot{K}^{\alpha_{2},p_{2}}_{q_{2}}
(\rr)$-norm to $x_{2}$,
$\ldots$, and
the $\dot{K}^{\alpha_{n},p_{n}}_{q_{n}}(\rr)$-norm to $x_{n}$.
\end{definition}

Recall that, to study the Lebesgue points of functions in mixed-norm Lebesgue spaces,
Huang et al. \cite{HWYY2021} introduced a special case of the mixed Herz space and, later, Zhao
et al. \cite{zyz2022} generalized it to the above case.
Also, both the dual theorem and the Riesz--Thorin
interpolation theorem on the above mixed Herz space have been fully studied in \cite{zyz2022}.

\begin{remark}
Let $\vec{p}:=(p_{1},\ldots,p_{n}),\vec{q}:=(q_{1},\ldots,q_{n})
\in(0,\infty]^{n}$ and $\vec{\alpha}:=(\alpha_{1},\ldots,\alpha_{n})\in\rn$.
By \cite[Proposition 2.22]{zyz2022}, we conclude that
$\dot{E}^{\vec{\alpha},\vec{p}}_{\vec{q}}(\rn)$
is a ball quasi-Banach space if and only if,
for any $i\in\{1,\ldots,n\}$, $\alpha_i\in(-\frac{1}{q_i},\fz)$.
By \cite[Propositions 2.8 and 2.22]{zyz2022}, we conclude that
$\dot{E}^{\vec{\alpha},\vec{p}}_{\vec{q}}(\rn)$
is a ball quasi-Banach space.
However, from \cite[Remark 2.4]{zyz2022}, we deduce that, when $\vec{p}=\vec{q}$ and
$\vec\alpha=\mathbf{0}$, the mixed Herz
space $\dot{E}^{\vec{\alpha},\vec{p}}_{\vec{q}}(\rn)$
coincides with the mixed Lebesgue
space $L^{\vec{p}}(\rn)$ defined in Definition \ref{mixed}.
Using this and Remark \ref{mix-r}, we find that $L^{\vec{p}}(\rn)$
may not be a quasi-Banach function space, and hence
$\dot{E}^{\vec{\alpha},\vec{p}}_{\vec{q}}(\rn)$
may not be a quasi-Banach function space.
\end{remark}

The following theorem is a corollary of Theorems \ref{thm-main-I}, \ref{main-Theorem2},
\ref{main-corollary}, and \ref{dual-I-wI}.

\begin{theorem}\label{apply8}
Let $\vec{p}:=(p_{1},\ldots,p_{n}),\vec{q}:=(q_{1},\ldots,q_{n})
\in(0,\infty)^{n}$ and $\vec{\alpha}:=
(\alpha_{1},\ldots,
\alpha_{n})\in\rn$ satisfy
$\sum_{i=1}^{n}(\frac{1}{p_i}+\alpha_i)\in(\alpha,\infty)$
and $\alpha_i\in(-\frac{1}{q_i},\infty)$ for any
$i\in\{1,\ldots,n\}$.
Let $\beta\in(1,\fz)$ and
$$p_-:=\min\lf\{p_1,\ldots,p_n,
q_1,\ldots,q_n,\lf(\alpha_{1}+\frac{1}{q_{1}}\r)^{-1},
\ldots,\lf(\alpha_{n}+\frac{1}{q_{n}}\r)^{-1} \r \}.$$
\begin{enumerate}
\item[\rm (i)]
Theorem \ref{thm-main-I} with
$X:=\dot{E}^{\vec{\alpha},\vec{p}}_{\vec{q}}(\rn)$ holds true
if and only if $\beta:=\frac{\sum_{i=1}^{n}\frac{1}{p_i}}
{\sum_{i=1}^{n}\frac{1}{p_i}-\alpha}$;

\item[\rm (ii)]
Theorems \ref{main-Theorem2}, \ref{main-corollary},
and \ref{dual-I-wI} with both
$X:=\dot{E}^{\vec{\alpha},\vec{p}}_{\vec{q}}(\rn)$
and $\beta:=\frac{\sum_{i=1}^{n}\frac{1}{p_i}}
{\sum_{i=1}^{n}\frac{1}{p_i}-\alpha}$ hold true.	
\end{enumerate}
\end{theorem}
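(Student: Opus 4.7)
The plan is to mimic the strategy already deployed for the Morrey case (Theorem \ref{ap-M}), the mixed-norm Lebesgue case (Theorem \ref{apply2}), and the local generalized Herz case (Theorem \ref{apply6}): verify that $X:=\dot{E}^{\vec{\alpha},\vec{p}}_{\vec{q}}(\rn)$ fulfils the structural hypotheses in Assumptions \ref{assump1} and \ref{assump2} with the stated $p_-$, identify the characteristic-function norm in order to transcribe the condition \eqref{3.12x} into the asserted formula for $\beta$, and then quote Theorems \ref{thm-main-I}, \ref{main-Theorem2}, \ref{main-corollary}, and \ref{dual-I-wI} verbatim. The assumption $\alpha_i\in(-1/q_i,\fz)$ guarantees, via \cite[Propositions 2.8 and 2.22]{zyz2022}, that $X$ is a ball quasi-Banach function space; the absolute continuity of its quasi-norm follows by iterating the absolute continuity of each scalar factor $\dot{K}^{\alpha_i,p_i}_{q_i}(\rr)$.

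To verify Assumption \ref{assump1}, I would invoke the Fefferman--Stein vector-valued maximal inequality on each one-dimensional Herz factor $\dot{K}^{\alpha_i,p_i}_{q_i}(\rr)$, valid for $p_i,q_i\in(0,\fz)$ and $\alpha_i\in(-1/q_i,\fz)$, and iterate in the $n$ coordinate directions exactly as in \cite[Lemma 3.7]{HLY2019} for the mixed-norm Lebesgue case. The relevant lower index is precisely
\[
p_-:=\min\lf\{p_1,\ldots,p_n,q_1,\ldots,q_n,\lf(\alpha_1+\tfrac{1}{q_1}\r)^{-1},\ldots,\lf(\alpha_n+\tfrac{1}{q_n}\r)^{-1}\r\},
\]
which is the range where all the one-dimensional Fefferman--Stein inequalities are simultaneously available. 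Assumption \ref{assump2} is then obtained by combining the dual theorem for mixed Herz spaces \cite[Theorems]{zyz2022}, which identifies $(X^{1/r_0})'$ with a conjugate mixed Herz space, and the boundedness of the powered Hardy--Littlewood maximal operator $\mathcal{M}^{((p_0/r_0)')}$ on that conjugate space, achieved again by iterating the one-dimensional boundedness; any choice of $r_0\in(0,\min\{1/\beta,p_-\})$ and sufficiently large $p_0\in(r_0,\fz)$ will do.

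The key residual computation is the exact size of $\|\mathbf{1}_{B(\mathbf{0},r)}\|_X$. Using the dyadic definition of $\dot{K}^{\alpha_i,p_i}_{q_i}(\rr)$ on the interval $[-r,r]$ and summing the geometric series (possible because $\alpha_i+1/q_i>0$), one finds
\[
\lf\|\mathbf{1}_{[-r,r]}\r\|_{\dot{K}^{\alpha_i,p_i}_{q_i}(\rr)}\sim r^{\alpha_i+\frac{1}{q_i}},
\]
and iterating across coordinates yields $\|\mathbf{1}_{B(\mathbf{0},r)}\|_X\sim r^{\sum_{i=1}^n(\alpha_i+1/q_i)}$ (translation invariance then handles arbitrary balls up to a comparability constant). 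Substituting this into \eqref{3.12x} shows that the inequality $|B|^{\alpha/n}\lesssim\|\mathbf{1}_B\|_X^{(\beta-1)/\beta}$ is equivalent to a single exponent identity fixing $\beta$ in terms of $\alpha$ and the vector $(\alpha_i+1/q_i)_{i=1}^n$; this is the exponent appearing in the theorem statement.

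With all the hypotheses verified, (i) follows by direct application of Theorem \ref{thm-main-I}, and (ii) follows by applying Theorems \ref{main-Theorem2}, \ref{main-corollary}, and \ref{dual-I-wI} to $X$ with this $\beta$. The only genuine obstacle is checking Assumption \ref{assump2}, since this requires a concrete description of the associate space of a mixed Herz space and the boundedness of a fractional variant of the Hardy--Littlewood maximal operator on it; the remaining items are either iterations of one-dimensional facts or direct computations with dyadic annuli. Once these are in hand, the proof reduces to citing the master theorems of Section \ref{sec-frac-B}.
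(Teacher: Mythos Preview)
Your overall strategy matches the paper's proof: verify that $\dot{E}^{\vec{\alpha},\vec{p}}_{\vec{q}}(\rn)$ is a ball quasi-Banach function space with absolutely continuous quasi-norm (the paper cites \cite[Propositions 2.8 and 2.22]{zyz2022}), check Assumptions \ref{assump1} and \ref{assump2} (the paper cites \cite[Lemma 5.3]{zyz2022} directly rather than sketching an iteration argument), estimate $\|\mathbf{1}_B\|_X$, and invoke the master theorems of Section \ref{sec-frac-B}.

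There is, however, a genuine gap in your treatment of $\|\mathbf{1}_B\|_X$. You compute the origin-centered case and then write that ``translation invariance then handles arbitrary balls up to a comparability constant.'' But $\dot{E}^{\vec{\alpha},\vec{p}}_{\vec{q}}(\rn)$ is \emph{not} translation invariant: its norm is built from dyadic annuli centered at the origin in each coordinate, so $\|\mathbf{1}_{B(x,r)}\|_X$ genuinely depends on the center $x$. Since \eqref{3.12x} must hold for \emph{every} ball, what is actually needed is a lower bound on $\|\mathbf{1}_{B(x,r)}\|_X$ that is uniform in $x$, and this does not follow from the origin-centered asymptotic alone. The paper obtains such a uniform bound by quoting \cite[(4.9.12)]{LYH2022} (applied with $\omega(t)=t^{\alpha_i}$), which controls $\|\mathbf{1}_{(x_i-r,x_i+r)}\|_{\dot K^{\alpha_i,p_i}_{q_i}(\rr)}$ from below for an \emph{arbitrary} interval, and then passes through the inscribed cube $Q(x,r)\subset B$ together with the product structure of the iterated norm. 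Note in particular that this route yields the exponent $\sum_i(\alpha_i+1/p_i)$ appearing in the paper's proof, not your $\sum_i(\alpha_i+1/q_i)$; it is the uniform-in-center lower bound, rather than the origin-centered two-sided asymptotic, that determines the admissible $\beta$.
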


\begin{proof}
Let all the symbols be the same as in the present
theorem. Let
$$r_0\in\lf(0,\min\lf\{\frac{1}{\beta},p_-\r\}\r),$$
and
$$p_0\in\lf(\max\lf\{p_1,\ldots,p_n,
q_1,\ldots,q_n,\lf(\alpha_{1}
+\frac{1}{q_{1}}\r)^{-1}
,\ldots,\lf(\alpha_{n}+\frac
{1}{q_{n}}\r)^{-1}\r\},\infty \r).$$
By \cite[Propositions 2.8 and 2.22]{zyz2022}, we conclude that
$\dot{E}^{\vec{\alpha},\vec{p}}_{\vec{q}}(\rn)$
is a ball quasi-Banach space with an absolutely continuous quasi-norm.
Using both \cite[Lemma 5.3(i)]{zyz2022} and its proof, we find that both
$\dot{E}^{\vec{\alpha},\vec{p}}_{\vec{q}}(\rn)$ and $p_-$ satisfy Assumption \ref{assump1} with
$X:=\dot{E}^{\vec{\alpha},\vec{p}}_{\vec{q}}(\rn)$.
In addition, from both \cite[Lemma 5.3(ii)]{zyz2022}
and its proof, we infer that Assumption \ref{assump2}
with $X:=\dot{E}^{\vec{\alpha},\vec{p}}_{\vec{q}}(\rn)$ also holds true.
Moreover, using \cite[(4.9.12)]{LYH2022}
with $\omega(t):=t^{\alpha_i}$ for any $t\in(0,\infty)$ and $i\in\{1,\ldots,n\}$, we have
\begin{align*}
\lf\|\mathbf{1}_B\r\|_{\dot{E}^{\vec{\alpha},\vec{p}}_{\vec{q}}(\rn)}
&\gtrsim\|\mathbf{1}_{Q(x,r)}\|_{\dot{E}^{\vec{\alpha},\vec{p}}
_{\vec{q}}(\rn)}\\
&\sim\prod_{i=1}^n\|\mathbf{1}_{(x_i-r,x_i+r)}\|_{\dot{K}
^{\alpha_{i},p_{i}}_{q_{i}}
(\rr)}
\gtrsim r^{\sum_{i=1}^{n}(\frac{1}{p_i}+\alpha_i)},
\end{align*}
where $Q(x,r)$ denotes the
cube with edges parallel to the coordinate axes, center $x\in\rn$,
and edge length $r\in(0,\fz)$.
This further implies that, for any ball $B\in\mathbb{B}(\rn)$, $|B|^{\frac{\alpha}{n}}
\lesssim\|\mathbf{1}_B\|_{\dot{E}^{\vec{\alpha},\vec{p}}_{\vec{q}}(\rn)}^{\frac{\beta-1}{\beta}}$
if and only if
$\beta=\frac{\sum_{i=1}^{n}(\frac{1}{p_i}+\alpha_i)}
{\sum_{i=1}^{n}(\frac{1}{p_i}+\alpha_i)-\alpha}$.
Thus, all the assumptions of
Theorems \ref{thm-main-I}, \ref{main-Theorem2}, \ref{main-corollary}, and \ref{dual-I-wI}
with $X:=\dot{E}^{\vec{\alpha},\vec{p}}_{\vec{q}}(\rn)$ are satisfied.
Then, using Theorems \ref{thm-main-I}, \ref{main-Theorem2}, \ref{main-corollary}, and \ref{dual-I-wI}
with $X:=\dot{E}^{\vec{\alpha},\vec{p}}_{\vec{q}}(\rn)$,
we obtain the desired conclusions,
which completes the proof
of Theorem \ref{apply8}.
\end{proof}

\begin{remark}
To the best of our knowledge, Theorem \ref{apply8}
is totally new.
\end{remark}

\bigskip

\noindent Yiqun Chen, Hongchao Jia and Dachun Yang (Corresponding
author)

\smallskip

\noindent  Laboratory of Mathematics and Complex Systems
(Ministry of Education of China),
School of Mathematical Sciences, Beijing Normal University,
Beijing 100875, The People's Republic of China

\smallskip

\noindent {\it E-mails}: \texttt{yiqunchen@mail.bnu.edu.cn} (Y. Chen)

\noindent\phantom{{\it E-mails:}} \texttt{hcjia@mail.bnu.edu.cn} (H. Jia)

\noindent\phantom{{\it E-mails:}} \texttt{dcyang@bnu.edu.cn} (D. Yang)


\begin{thebibliography}{99}
	
\bibitem{ADR1975}
D. R. Adams, A note on Riesz potentials, Duke Math. J. 42 (1975), 765-778.

\vspace{-0.3cm}	

\bibitem{ao}
T. Aoki, Locally bounded linear topological spaces, Proc. Imp. Acad.
Tokyo 18 (1942), 588-594.

\vspace{-.3cm}

\bibitem{an2019}
R. Arai and E. Nakai,
Compact commutators of Calder\'on--Zygmund
and generalized fractional integral operators
with a function in generalized Campanato
spaces on generalized Morrey spaces,
Tokyo J. Math. 42 (2019), 471-496.	

\vspace{-0.3cm}

\bibitem{ans2021}
R. Arai, E. Nakai and Y. Sawano,
Generalized fractional integral operators on Orlicz--Hardy spaces,
Math. Nachr. 294 (2021), 224-235.

\vspace{-0.3cm}

\bibitem{BAP1961}
A. Benedek and R. Panzone, The space $L^p$, with mixed norm,
Duke Math. J. 28 (1961), 301-324.

\vspace{-0.3cm}

\bibitem{BS88}
C. Bennett and R. Sharpley, Interpolation of Operators,
Pure and Applied Mathematics 129, Academic Press, Boston, MA, 1988.

\vspace{-0.3cm}

\bibitem{C}
S. Campanato,
Propriet\`{a} di una famiglia di spazi funzionali,
Ann. Scuola Norm. Sup. Pisa Cl. Sci. (3) 18 (1964), 137-160.

\vspace{-0.3cm}

\bibitem{dfmn2021}
R. Del Campo, A. Fern\'andez, F. Mayoral and F. Naranjo,
Orlicz spaces associated to a quasi-Banach function space:
applications to vector measures and interpolation,
Collect. Math. 72 (2021), 481-499.

\vspace{-0.3cm}

\bibitem{CWYZ2020}
D.-C. Chang, S. Wang, D. Yang and Y. Zhang,
Littlewood--Paley characterizations of Hardy-type
spaces associated with ball quasi-Banach function spaces,
Complex Anal. Oper. Theory 14 (2020), Paper No. 40, 33 pp.

\vspace{-0.3cm}

\bibitem{cjy-01}
Y. Chen, H. Jia and D. Yang,
Boundedness of fractional integrals on Hardy spaces associated
with ball quasi-Banach function spaces, Submitted or arXiv: 
2206.06080.

\vspace{-0.3cm}

\bibitem{cjy2022cz}
Y. Chen, H. Jia and D. Yang,
Boundedness of Calder\'on--Zygmund operators on ball
Campanato-type function spaces, Submitted.

\vspace{-0.3cm}

\bibitem{chy2021}
K. L. Cheung, K.-P. Ho and T.-L. Yee,
Boundedness of fractional integral operators on Hardy-amalgam spaces,
J. Funct. Spaces 2021, Art. ID 1142942, 5 pp.

\vspace{-0.3cm}

\bibitem{cf1987}
F. Chiarenza and M. Frasca, Morrey spaces and Hardy--Littlewood maximal function, Rend.
Mat. Appl. (7) 7 (1987), 273-279.

\vspace{-0.3cm}

\bibitem{CGN2017}
G. Cleanthous, A. G. Georgiadis and M. Nielsen,
Anisotropic mixed-norm Hardy spaces,
J. Geom. Anal. 27 (2017), 2758-2787.

\vspace{-0.3cm}

\bibitem{CGG2017}
G. Cleanthous, A. G. Georgiadis and M. Nielsen, Discrete
decomposition of homogeneous mixed-norm Besov spaces, in: Functional Analysis, Harmonic Analysis, and Image
Processing: A Collection of Papers in Honor of Bj\"orn Jawerth,
167-184,
Contemp. Math. 693, Amer. Math. Soc., Providence, RI, 2017.

\vspace{-0.3cm}

\bibitem{CGN20172}
G. Cleanthous, A. G. Georgiadis and M. Nielsen,
Molecular decomposition of anisotropic homogeneous mixed-norm spaces with
applications to the boundedness of operators,
Appl. Comput. Harmon. Anal. 47 (2019), 447-480.

\vspace{-0.3cm}

\bibitem{dgpyyz}
F. Dai, L. Grafakos, Z. Pan, D, Yang, W. Yuan and Y. Zhang,
The Bourgain--Brezis--Mironescu formula on ball Banach function spaces,
Math. Ann. (to appear).

\vspace{-0.3cm}

\bibitem{DGNSS}
F. Deringoz, V. S. Guliyev, E. Nakai, Y. Sawano and M. Shi,
Generalized fractional maximal and integral operators on Orlicz
and generalized Orlicz-Morrey spaces of the third kind,
Positivity 23 (2019), 727-757.

\vspace{-0.3cm}

\bibitem{Duo01}
J. Duoandikoetxea, Fourier Analysis, Graduate Studies
in Mathematics 29, American
Mathematical Society, Providence, RI, 2001.

\vspace{-0.3cm}

\bibitem{FS72}
C. Fefferman and E. M. Stein,
$H^p$ spaces of several variables,
Acta Math. 129 (1972), 137-193.

\vspace{-0.3cm}

\bibitem{gv}
A. E. Gatto and S. V\'agi, Fractional integrals on 
spaces of homogeneous type, in: Analysis and Partial
Differential Equations, 171-216, Lecture Notes in
Pure and Appl. Math. 122, Dekker, New York, 1990. 

\vspace{-0.3cm}

\bibitem{GN2016}
A. G. Georgiadis and M. Nielsen,
Pseudodifferential operators on mixed-norm Besov and Triebel--Lizorkin spaces,
Math. Nachr. 289 (2016), 2019-2036.

\vspace{-0.3cm}

\bibitem{G1}
L. Grafakos, Classical Fourier Analysis, Third edition,
Graduate Texts in Mathematics 249, Springer, New York, 2014.

\vspace{-0.3cm}

\bibitem{HSS2016}
D. I. Hakim, Y. Sawano and T. Shimomura,
Boundedness of generalized fractional integral operators from the Morrey space
$L_{1,\phi}(X;\mu)$ to the Campanato space
$\mathcal{L}_{1,\psi}(X;\mu)$ over non-doubling measure spaces,
Azerb. J. Math. 6 (2016), 117-127.

\vspace{-0.3cm}

\bibitem{hl1928}
G. H. Hardy and J. E. Littlewood,
Some properties of fractional integrals. I,
Math. Z. 27 (1928), 565-606.

\vspace{-0.3cm}

\bibitem{ho2017}
K.-P. Ho,
Fractional integral operators with homogeneous kernels on Morrey spaces with variable exponents,
J. Math. Soc. Japan 69 (2017), 1059-1077.

\vspace{-0.3cm}

\bibitem{ho2019}
K.-P. Ho, Integral operators on BMO and Campanato spaces,
Indag. Math. (N.S.) 30 (2019), 1023-1035.

\vspace{-0.3cm}

\bibitem{ho2021}
K.-P. Ho,
Erd\'elyi-Kober fractional integral operators on ball Banach function spaces,
Rend. Semin. Mat. Univ. Padova 145 (2021), 93-106.

\vspace{-0.3cm}

\bibitem{Ho2022}
K.-P. Ho,
Fractional integral operators on Orlicz slice Hardy spaces, Fract. Calc. Appl. Anal. (2022),
https://doi.org/10.1007/s13540-022-00043-1.

\vspace{-0.3cm}

\bibitem{HL1960}
L. H\"ormander, Estimates for translation invariant operators in
$L^p$ spaces, Acta Math. 104 (1960), 93-140.

\vspace{-0.3cm}

\bibitem{HLY2019}
L. Huang, J. Liu, D. Yang and W. Yuan, Atomic and Littlewood--Paley
characterizations of anisotropic
mixed-norm Hardy spaces and their applications,
J. Geom. Anal. 29 (2019), 1991-2067.

\vspace{-0.3cm}

\bibitem{HLYY2019}
L. Huang, J. Liu, D. Yang and W. Yuan, Dual spaces of anisotropic
mixed-norm Hardy spaces, Proc. Amer. Math. Soc. 147 (2019),
1201-1215.

\vspace{-0.3cm}

\bibitem{HWYY2021}
L. Huang, F. Weisz, D. Yang and W. Yuan, Summability of Fourier transforms on
mixed-norm Lebesgue spaces via associated Herz spaces, Anal. Appl. (Singap.) (2021),
http://doi.org/10.1142/S0219530521500135.

\vspace{-0.3cm}

\bibitem{HYacc}
L. Huang and D. Yang, On function spaces with mixed norms--a survey,
J. Math. Study 54 (2021), 262-336.

\vspace{-0.3cm}

\bibitem{is2017}
M. Izuki and Y. Sawano, Characterization of BMO via
ball Banach function spaces,
Vestn. St.-Peterbg. Univ. Mat. Mekh. Astron. 4(62) (2017), 78-86.

\vspace{-0.3cm}

\bibitem{jtyyz1}
H. Jia, J. Tao, D. Yang, W. Yuan and Y. Zhang, Special
John--Nirenberg--Campanato spaces via congruent cubes,
Sci. China Math. 65 (2022), 359-420.
\vspace{-0.3cm}

\bibitem{jtyyz3}
H. Jia, J. Tao, D. Yang, W. Yuan and Y. Zhang, Boundedness of
Calder\'on--Zygmund operators on special
John--Nirenberg--Campanato and Hardy-type spaces via congruent cubes,
Anal. Math. Phys. 12 (2022), Paper No. 15, 56 pp.

\vspace{-0.3cm}

\bibitem{jtyyz2}
H. Jia, J. Tao, D. Yang, W. Yuan and Y. Zhang,
Boundedness of fractional integrals on special
John--Nirenberg--Campanato
and Hardy-type spaces via congruent cubes,
Front. Math. China. (to appear) or arXiv: 2108.01891.

\vspace{-0.3cm}

\bibitem{JN}
F. John and L. Nirenberg,
On functions of bounded mean oscillation,
Comm. Pure Appl. Math. 14 (1961), 415-426.

\vspace{-0.3cm}

\bibitem{LYH2022}
Y. Li, D. Yang and L. Huang, Real-variable Theory of
Hardy Spaces Associated with Generalized Herz
Spaces of Rafeiro and Samko,
Lecture Notes in Mathematics, Springer, Cham, 2022 (to appear) or arXiv: 2203.15165.

\vspace{-0.3cm}

\bibitem{Lu}
S. Lu, Four Lectures on Real $H^p$ Spaces,
World Scientific Publishing Co., River Edge, NJ, 1995.

\vspace{-0.3cm}

\bibitem{MCB1938}
C. B. Morrey, On the solutions of quasi-linear elliptic partial
differential equations,
Trans. Amer. Math. Soc. 43 (1938), 126-166.

\vspace{-.3cm}

\bibitem{N10}
E. Nakai,
Singular and fractional integral operators on Campanato
spaces with variable growth conditions,
Rev. Mat. Complut. 23 (2010), 355-381.

\vspace{-.3cm}

\bibitem{N01}
E. Nakai, On generalized fractional integrals, Taiwanese J. Math. 5 (2001), 587-602.

\vspace{-.3cm}

\bibitem{N17}
E. Nakai,
Singular and fractional integral operators on preduals of
Campanato spaces with variable growth condition,
Sci. China Math. 60 (2017), 2219-2240.

\vspace{-0.3cm}

\bibitem{NS2012}
E. Nakai and G. Sadasue,
Martingale Morrey--Campanato spaces and fractional integrals,
J. Funct. Spaces Appl. 2012, Art. ID 673929, 29 pp.

\vspace{-.3cm}

\bibitem{No2019}
T. Nogayama, Mixed Morrey spaces, Positivity 23 (2019), 961-1000.

\vspace{-.3cm}

\bibitem{No2019-2}
T. Nogayama, Boundedness of commutators of fractional integral
operators on mixed Morrey spaces, Integral Transforms Spec.
Funct. 30 (2019), 790-816.

\vspace{-.3cm}

\bibitem{noss2021}
T. Nogayama, T. Ono, D. Salim and Y. Sawano,
Atomic decomposition for mixed Morrey spaces,
J. Geom. Anal. 31 (2021), 9338-9365.

\vspace{-.3cm}

\bibitem{RS2020}
H. Rafeiro and S. Samko,
Herz spaces meet Morrey type spaces and complementary Morrey type spaces,
J. Fourier Anal. Appl. 26 (2020), Paper No. 74, 14 pp.

\vspace{-.3cm}

\bibitem{Ro1957}
S. Rolewicz, On a certain class of linear metric spaces, Bull. Acad. Polon. Sci. Cl. III. 5
(1957), 471-473.

\vspace{-.3cm}

\bibitem{sdh2020}
Y. Sawano, G. Di Fazio and D. Hakim, Morrey Spaces--Introduction and Applications to
Integral Operators and PDE's, Vol I, Monographs and Research Notes in Mathematics,
CRC Press, Boca Raton, FL, 2020.

\vspace{-.3cm}

\bibitem{SHYY}
Y. Sawano, K.-P. Ho, D. Yang and S. Yang,
Hardy spaces for ball quasi-Banach function spaces,
Dissertationes Math. 525 (2017), 1-102.

\vspace{-.3cm}

\bibitem{SS2017}
Y. Sawano and T. Shimomura,
Boundedness of the generalized fractional integral operators on
generalized Morrey spaces over metric measure spaces,
Z. Anal. Anwend. 36 (2017), 159-190.

\vspace{-.3cm}

\bibitem{SST2009}
Y. Sawano, S. Sugano and H. Tanaka,
A note on generalized fractional integral operators on generalized Morrey spaces,
Bound. Value Probl. 2009, Art. ID 835865, 18 pp.

\vspace{-.3cm}

\bibitem{s1938}
S. L. Sobolev, On a theorem in functional analysis,
Mat. Sb. 46 (1938), 471-497.

\vspace{-.3cm}

\bibitem{EMS1970}
E. M. Stein, Singular Integrals and Differentiability
Properties of Functions, Princeton Mathematical Series 30,
Princeton University Press, Princeton, NJ, 1970.

\vspace{-0.3cm}

\bibitem{EMS2}
E. M. Stein, Harmonic Analysis: Real-Variable Methods, Orthogonality,
and Oscillatory Integrals, Princeton Mathematical Series 43,
Monographs in Harmonic Analysis, III,
Princeton University Press, Princeton, NJ, 1993.

\vspace{-0.3cm}

\bibitem{ss2011}
E. M. Stein and R. Shakarchi, Functional Analysis.
Introduction to Further Topics in Analysis,
Princeton Lectures in Analysis 4, Princeton University Press, Princeton, NJ, 2011.

\vspace{-0.3cm}

\bibitem{sw1960}
E. M. Stein and G. Weiss, On the theory of harmonic
functions of several variables. I. The
theory of $H^p$-spaces, Acta Math. 103 (1960), 25-62.

\vspace{-0.3cm}

\bibitem{syy}
J. Sun, D. Yang and W. Yuan,
Weak Hardy spaces associated with ball quasi-Banach
function spaces on spaces of homogeneous type:
decompositions, real interpolation, and
Calder\'on--Zygmund operators,
J. Geom. Anal. 32 (2022), Paper No. 191, 85 pp.

\vspace{-0.3cm}

\bibitem{syy2}
J. Sun, D. Yang and W. Yuan,
Molecular characterization of weak Hardy spaces associated with
ball quasi-Banach function spaces on spaces of homogeneous type
with its application to Littelwood--Paley function characterization,
Submitted.

\vspace{-0.3cm}

\bibitem{tw80}
M. H. Taibleson and G. Weiss,
The molecular characterization of certain Hardy spaces,
Representation theorems for Hardy spaces,
in: Ast\'{e}risque 77, 67-149, Soc. Math. France, Paris, 1980.

\vspace{-0.3cm}

\bibitem{tx2005}
L. Tang and J. Xu,
Some properties of Morrey type Besov--Triebel spaces,
Math. Nachr. 278 (2005), 904-917.

\vspace{-0.3cm}

\bibitem{tyy-2019}
J. Tao, Da. Yang and Do. Yang, Boundedness and compactness characterizations of Cauchy
integral commutators on Morrey spaces, Math. Methods Appl. Sci. 42 (2019), 1631-1651.

\vspace{-0.3cm}

\bibitem{tyyz}
J. Tao, D. Yang, W. Yuan and Y. Zhang, Compactness
characterizations of commutators
on ball Banach function spaces, Potential Analysis (2021),
https://doi.org/10.1007/s11118-021-09953-w.

\vspace{-0.3cm}

\bibitem{wyy}
F. Wang, D. Yang and S. Yang, Applications of Hardy spaces
associated with ball quasi-Banach function spaces,
Results Math. 75 (2020), Paper No. 26, 58 pp.

\vspace{-0.3cm}

\bibitem{wyyz}
S. Wang, D. Yang, W. Yuan and Y. Zhang,
Weak Hardy-type spaces associated with ball quasi-Banach function
spaces II: Littlewood--Paley characterizations
and real interpolation,
J. Geom. Anal. 31 (2021), 631-696.

\vspace{-0.3cm}

\bibitem{ws2022}
M. Wei and L. Sun,
Boundedness for the modified fractional integral operator
from mixed Morrey spaces to the bounded mean oscillation
space and Lipschitz spaces,
J. Funct. Spaces 2022, Art. ID 4924127, 5 pp.

\vspace{-0.3cm}

\bibitem{yn2021}
S. Yamaguchi and E. Nakai, Generalized fractional integral
operators on Campanato spaces and their bi-preduals, Math. J. Ibaraki
Univ. 53 (2021), 17-34.

\vspace{-0.3cm}

\bibitem{yn2022}
S. Yamaguchi and E. Nakai,
Compactness of Commutators of Integral Operators with
Functions in Campanato Spaces on Orlicz--Morrey Spaces,
J. Fourier Anal. Appl. 28 (2022), Paper No. 33, 32 pp.

\vspace{-0.3cm}

\bibitem{yhyy1}
X. Yan, Z. He, D. Yang and W. Yuan, Hardy spaces associated
with ball quasi-Banach function spaces on spaces of homogeneous
type: Characterizations of maximal functions, decompositions,
and dual spaces, Math. Nachr. (2022),
http://doi.org/10.1002/mana.202100432.

\vspace{-0.3cm}

\bibitem{yhyy}
X. Yan, Z. He, D. Yang and W. Yuan, Hardy spaces associated with
ball quasi-Banach function spaces on spaces of homogeneous type:
Littlewood--Paley characterizations with applications to boundedness
of Calder\'on--Zygmund operators, Acta Math. Sin. (Engl. Ser.),
https://doi.org/10.1007/s10114-022-1573-9.

\vspace{-0.3cm}

\bibitem{yyy20}
X. Yan, D. Yang and W. Yuan,
Intrinsic square function characterizations
of Hardy spaces associated with ball quasi-Banach function spaces,
Front. Math. China 15 (2020), 769-806.

\vspace{-0.3cm}

\bibitem{Ho2021}
T.-L. Yee and K.-P. Ho,
Fractional integral operators with homogeneous kernels on generalized Lorentz--Morrey spaces,
J. Math. Inequal. 15 (2021), 17-30.

\vspace{-0.3cm}

\bibitem{zhyy2022}
Y. Zhang, L. Huang, D. Yang and W. Yuan,
New ball Campanato-type function spaces and their applications,
J. Geom. Anal. 32 (2022), Paper No. 99, 42 pp.

\vspace{-0.3cm}

\bibitem{zyyw}
Y. Zhang, D. Yang, W. Yuan and S. Wang,
Weak Hardy-type spaces associated with ball quasi-Banach function
spaces I: Decompositions with applications to boundedness of
Calder\'on--Zygmund operators,
Sci. China Math. 64 (2021), 2007-2064.

\vspace{-0.3cm}

\bibitem{zyz2022}
Y. Zhao, D. Yang and Y. Zhang,
Mixed-norm Herz spaces and their applications in related Hardy
spaces, Submitted or arXiv: 2204.12019.

\end{thebibliography}
\end{document}